\tikzstyle{vertex}=[circle, draw, inner sep=2pt, minimum size=6pt]
\providecommand{\keywords}[1]{
  \small	
  \textbf{\textit{Keywords---}} #1
}
\newtheorem{lemma}{Lemma}[section]
\newtheorem{corollary}{Corollary}[section]
\newtheorem{proposition}{Proposition}[section]
\newtheorem{example}{Example}[section]
\newcommand{\nc}{\newcommand}
\newcommand{\Yildirim}{Y{\i}ld{\i}r{\i}m}
\nc{\cA}{{\cal A}}
\nc{\cB}{{\cal B}}
\nc{\cC}{{\cal C}}
\nc{\cD}{{\cal D}}
\nc{\cE}{{\cal E}}
\nc{\cG}{{\cal G}}
\nc{\cF}{{\cal F}}
\nc{\cH}{{\cal H}}
\nc{\cI}{{\cal I}}
\nc{\cK}{{\cal K}}
\nc{\cL}{{\cal L}}
\nc{\cM}{{\cal M}}
\nc{\cN}{{\cal N}}
\nc{\cO}{{\cal O}}
\nc{\cP}{{\cal P}}
\nc{\cQ}{{\cal Q}}
\nc{\cR}{{\cal R}}
\nc{\cS}{{\cal S}}
\nc{\cT}{{\cal T}}
\nc{\cV}{{\cal V}}
\nc{\tx}{{\tilde x}}
\nc{\la}{{\langle}}
\nc{\ra}{{\rangle}}
\nc{\ts}{\textsuperscript}
\def\R{\mathbb{R}}
\title{Polyhedral Properties of RLT Relaxations of 
Nonconvex Quadratic Programs and Their Implications on Exact Relaxations}
\author{Yuzhou Qiu\thanks{School of Mathematics, Peter Guthrie Tait Road, The University of Edinburgh, Edinburgh, EH9 3FD, United Kingdom. E-mail: \tt{y.qiu-16@sms.ed.ac.uk}} \and E. Alper \Yildirim\thanks{School of Mathematics, Peter Guthrie Tait Road, The University of Edinburgh, Edinburgh, EH9 3FD, United Kingdom. ORCID ID: 0000-0003-4141-3189 E-mail: \tt{E.A.Yildirim@ed.ac.uk}}}
\date{24 March 2023}
\begin{document}

\maketitle
\begin{abstract}
  We study linear programming relaxations of nonconvex quadratic programs given by the reformulation-linearization technique (RLT), referred to as RLT relaxations. We investigate the relations between the polyhedral properties of the feasible regions of a quadratic program and its RLT relaxation. We establish various connections between recession directions, boundedness, and vertices of the two feasible regions. Using these properties, we present necessary and sufficient exactness conditions for RLT relaxations. We then give a thorough discussion of how our results can be converted into simple algorithmic procedures to construct instances of quadratic programs with exact, inexact, or unbounded RLT relaxations. 
\end{abstract}

\keywords{Quadratic programming, reformulation linearization technique, RLT relaxation, exact relaxation}

{\bf AMS Subject Classification:} 90C05, 90C20, 90C26

\section{Introduction} \label{intro}

A quadratic program involves minimizing a (possibly nonconvex) quadratic function over a polyhedron:
\[
\textrm{(QP)} \quad \ell^* = \min\limits_{x \in \R^n} \left\{q(x): x \in F\right\},
\]
where $q: \R^n \to \R$ and $F \subseteq \R^n$ are given by
\begin{eqnarray}
q(x) & = & \textstyle\frac{1}{2} x^T Q x + c^T x, \label{def_qx} \\ 
F & = & \left\{x \in \R^n: G^T x \leq g, \quad H^T x = h \right\}. \label{def_F}
\end{eqnarray}
Here, $Q \in \R^{n \times n}$, $c \in \R^n$, $G \in \R^{n \times m}$, $H \in \R^{n \times p}$, $g \in \R^m$, and $h \in \R^p$ constitute the parameters and $x \in \R^n$ denotes the decision variable. Without loss of generality, we assume that $Q$ is a symmetric matrix. We denote the optimal value of (QP) by $\ell^* \in \R \cup \{-\infty\} \cup \{+\infty\}$, with the usual conventions for infeasible and unbounded problems.

Quadratic programs constitute an important class of problems in global optimization and they arise in a wide variety of applications, ranging from support vector machines and portfolio optimization to various combinatorial optimization problems such as the maximum stable set problem and the MAX-CUT problem. We refer the reader to~\cite{FuriniTBFGGLLMM19} and the references therein. In addition, quadratic programs also appear as subproblems in sequential quadratic programming algorithms for solving more general classes of nonlinear optimization problems (see, e.g.,~\cite{NoceWrig06}).   

It is well-known that quadratic programs, in general, are NP-hard (see, e.g.,~\cite{Sahni74,Pardalos1991QuadraticPW}). As such, global optimization algorithms for quadratic programs are generally based on spatial branch-and-bound methods. Convex relaxations play the crucial role of generating lower bounds in this framework. 

In this paper, we focus on a linear programming relaxation of (QP) arising from the reformulation-and-linearization technique (RLT), henceforth referred to as the RLT relaxation~\cite{Sherali1999}. The RLT relaxation consists of two stages. The reformulation stage consists of generating quadratic constraints that are implied by the linear constraints in $F$. Such quadratic constraints are obtained either by multiplying two linear inequality constraints or by multiplying a linear equality constraint by a variable. In the linearization stage, the resulting implied quadratic constraints are linearized by introducing a new variable for each quadratic term. Finally, the substitution of quadratic terms in the objective function of (QP) with the new variables gives rise to the RLT relaxation, whose optimal value, denoted by $\ell_R^*$, yields a lower bound on $\ell^*$. We say that the RLT relaxation is \emph{exact} if $\ell_R^* = \ell^*$.

We investigate the relations between the polyhedral properties of the feasible region $F$ of (QP) given by \eqref{def_F} and that of its RLT relaxation, denoted by $\cF$. In particular, we focus on the relations between recession directions, boundedness, and vertices of the two feasible regions. As a byproduct of our analysis, we identify a necessary and sufficient condition in order for an RLT relaxation to be exact. We also discuss how our results can be used to construct an instance of (QP) that admits an exact, inexact, or unbounded RLT relaxation. Our contributions are as follows:

\begin{enumerate}
    \item We show that various properties of $F$ such as boundedness and existence of vertices directly translate to $\cF$.

    \item We present simple procedures for constructing recession directions and vertices of $\cF$ from their counterparts in $F$.

    \item For a certain subclass of quadratic programs, we obtain a complete description of the set of all vertices. By observing that every quadratic program can be equivalently reformulated in this form, we discuss the implications of this observation on RLT relaxations of general quadratic programs. 

    \item We identify a necessary and sufficient condition for instances of (QP) that admit an exact RLT relaxation. 
    
    \item By using the aforementioned exactness characterization together with the optimality conditions of the RLT relaxation, we present simple algorithmic procedures to construct an instance of (QP) with an unbounded, inexact, or exact RLT relaxation.
\end{enumerate}

We consider the polyhedron $F$ in the general form given by \eqref{def_F} as opposed to a more convenient form such as the standard form for the following reasons. First, many classes of problems such as quadratic programs with box constraints and standard quadratic programs have an associated natural formulation, and converting it to another form generally requires the introduction of additional variables and/or constraints. Such a conversion increases the dimension of the corresponding RLT relaxation, which, in turn, increases the computational cost of solving the relaxation. Second, such a conversion may change the polyhedral structure of the feasible region of (QP). For instance, $F$ may not have any vertices but any nonempty polyhedron in standard form necessarily has at least one vertex. On the other hand, our goal in this paper is to identify the relations between the polyhedral properties of the original feasible region $F$ and those of $\cF$. As such, we adopt the general description given by \eqref{def_F}.

This paper is organized as follows. We review the literature in Section~\ref{lit_rev} and define our notation in Section~\ref{notation}. We present basic results about polyhedra in Section~\ref{Sec2}. We introduce the RLT relaxation and discuss its polyhedral properties in Section~\ref{Sec3}.  Section~\ref{Sec4} is devoted to the discussion of duality and optimality conditions of the RLT relaxation. We introduce the convex underestimators induced by the RLT relaxation and present necessary and sufficient conditions for an exact RLT relaxation in Section~\ref{Sec5}. We discuss how our results can be used to efficiently construct instances of (QP) with exact, inexact, or unbounded RLT relaxations in Section~\ref{Sec6}. 
Finally, Section~\ref{Sec7} concludes the paper. 

\subsection{Literature Review} \label{lit_rev}

In this section, we briefly review the relevant literature. 

The ideas that led to the RLT relaxation were developed by several authors in a series of papers. To the best of our knowledge, the terminology first appears in~\cite{SheraliA92}, where the authors develop a branch-and-bound method based on RLT relaxations for solving bilinear quadratic programs, i.e., instances of (QP) for which all the diagonal entries of $Q$ are equal to zero. In~\cite{sherali1995reformulation}, this approach is extended to general quadratic programs and several properties of the RLT relaxation are established. The RLT relaxation has been extended to more general classes of discrete and nonconvex optimization problems~(see, e.g., \cite{Sherali1999}).

RLT relaxations of quadratic programs can be further strengthened by adding a set of convex quadratic constraints~\cite{sherali1995reformulation} or by adding semidefinite constraints~\cite{anstreicher2009semidefinite}, referred to as the SDP-RLT relaxation. The latter relaxation usually provides much tighter bounds than the RLT relaxation at the expense of significantly higher computational effort. Furthermore, a continuum of linear programming relaxations between the RLT relaxation and the SDP-RLT relaxation can be obtained by viewing the semidefinite constraint as an infinite number of linear constraints and adding these linear cuts in a cutting plane framework~\cite{SheraliF02}. Alternatively, by using another representation as an infinite number of second-order conic constraints, one can obtain a sequence of second-order conic relaxations that are provably tighter than their linear programming counterparts~\cite{KimK03}. In terms of computational cost, second-order conic relaxations roughly lie between cheaper linear programming relaxations and more expensive semidefinite programming relaxations. Alternative convex relaxations can be obtained by relying on the observation that every quadratic program can be equivalently formulated as an instance of a copositive optimization problem~\cite{Burer09}, which is a convex but NP-hard problem. Nevertheless, the copositive cone can be approximated by various sequences of tractable convex cones, each of which gives rise to relaxation hierarchies that are exact in the limit. We refer the reader to~\cite{yildirim2020alternative} for a unified treatment of a rather large family of convex relaxations arising from the copositive formulation, and to~\cite{BaoST11,anstreicher2012convex,Bomze15} for comparisons of various convex relaxations. 

Despite the fact that there exist many convex relaxations that are provably at least as tight as the RLT relaxation, the latter is used extensively in global optimization algorithms (see, e.g.,~\cite{Sherali1999, audet2000branch,TawarmalaniS05}) due to the fact that state-of-the-art linear programming solvers can usually scale very well with the size of the problem. Furthermore, they are generally much more numerically stable than second-order conic programming and semidefinite programming solvers that are required for solving tighter relaxations. As such, RLT relaxations play a central role in global solution algorithms for nonconvex optimization problems, which motivates our focus on their polyhedral properties.

Recently, the authors of this paper studied RLT and SDP-RLT relaxations of quadratic programs with box constraints, which is a special case of (QP)~\cite{QY23}. They presented algebraic descriptions of instances that admit exact RLT relaxations as well as those that admit exact SDP-RLT relaxations. Using these descriptions, they proposed simple algorithmic procedures for constructing instances with exact or inexact RLT and SDP-RLT relaxations. Some of our results in this paper can be viewed as extensions of the corresponding results in~\cite{QY23} to general quadratic programs. In contrast with~\cite{QY23}, where the focus is on descriptions of instances with exact and inexact relaxations for the specific class of quadratic programs with box constraints, our main focus in this paper is on the relations between the polyhedral properties of general quadratic programs and their RLT relaxations.

\subsection{Notation} \label{notation}

We use $\R^n$, $\R^n_+$, $\R^{m \times n}$, $\cS^n$, and $\cN^n$ to denote the $n$-dimensional Euclidean space, the nonnegative orthant, the set of $m \times n$ real matrices, the space of $n \times n$ real symmetric matrices, and the cone of componentwise nonnegative $n \times n$ real symmetric matrices, respectively. We use 0 to denote the real number 0, the vector of all zeroes, as well as the matrix of all zeroes, which should always be clear from the context. We denote by $e \in \R^n$ the vector of all ones. All inequalities on vectors or matrices are componentwise. The rank of a matrix $A \in \R^{m \times n}$ is denoted by $\textrm{rank}(A)$. We use $\textrm{conv}(\cdot)$, $\textrm{cone}(\cdot)$, and $\textrm{span}(\cdot)$ to denote the convex hull, conic hull, and the collection of all linear combinations of a set, respectively. For index sets $\mathbf{J} \subseteq \{1,\ldots,m\}$, $\mathbf{K} \subseteq \{1,\ldots,n\}$, $x \in \R^n$, and $B \in \R^{m \times n}$, we denote by $x_\mathbf{K} \in \R^{|\mathbf{K}|}$ the subvector of $x$ restricted to the indices in $\mathbf{K}$ and by $B_{\mathbf{J}\mathbf{K}} \in \R^{|\mathbf{J}|\times|\mathbf{K}|}$ the submatrix of $B$ whose rows and columns are indexed by $\mathbf{J}$ and $\mathbf{K}$, respectively, where $|\cdot|$ denotes the cardinality of a finite set. We use $x_j$ and $Q_{ij}$ for singleton index sets. For any $U \in \R^{m \times n}$ and $V \in \R^{m \times n}$, the trace inner product is denoted by 
\[
\langle U, V \rangle = \textrm{trace}(U^T V) = \sum\limits_{i=1}^m \sum\limits_{j = 1}^n U_{ij} V_{ij}.
\]

\section{Preliminaries} \label{Sec2}

In this section, we review basic facts about polyhedra. We refer the reader to~\cite{Schrijver1999} for proofs and further results.

Let $F \subseteq \R^n$ be a nonempty polyhedron given by \eqref{def_F}. 
The recession cone of $F$, denoted by $F_\infty \subseteq \R^n$, is given by 
\begin{equation} \label{def_Dinf}
F_\infty = \left\{d \in \R^n: G^T d \leq 0, \quad H^T d = 0\right\}.     
\end{equation}

Note that $F_\infty$ is a polyhedral cone. By the Minkowski-Weyl Theorem, it is finitely generated, i.e., there exists $d^j \in \R^n,~j = 1,\ldots,t$ such that 
\begin{equation} \label{def_Dinf_alt}
F_\infty = \textrm{cone}\left(\left\{d^1,\ldots,d^t\right\}\right) = \left\{\sum\limits_{j=1}^t \lambda_j d^j: \lambda_j \geq 0,~j = 1,\ldots,t\right\}.     
\end{equation}

Recall that a hyperplane $\{x \in \R^n: a^T x = \alpha\}$, where $a \in \R^n$ and $\alpha \in \R$, is a \emph{supporting hyperplane} of $F$ if $\alpha = \min\{a^T x: x \in F\}$. A set $F_0 \subseteq F$ is a \emph{face} of $F$ if $F_0$ is given by the intersection of $F$ with a supporting hyperplane. In particular, $F_0 \subseteq F$ is a face of $F$ if and only if there exists a submatrix $G^0 \in \R^{n \times m_0}$ of $G$, where $m_0 \leq m$, such that 
\begin{equation} \label{face_def}
 F_0 = \{x \in F: (G^{0})^T x = g^{0}\},   
\end{equation}
where $g^0 \in \R^{m_0}$ denotes the corresponding subvector of $g$. In particular, $F_0$ is a \emph{minimal face} of $F$ if and only if it is an affine subspace, i.e., if and only if there exists a submatrix $G^0 \in \R^{n \times m_0}$ of $G$, where $m_0 \leq m$, and a corresponding subvector $g^0 \in \R^{m_0}$ of $g$ such that
\begin{equation} \label{min_face_def}
 F_0 = \{x \in \R^n: (G^0)^T x = g^0, \quad H^T x = h\}.   
\end{equation}

Let 
\begin{equation} \label{rank-const-mat}
\rho= \textrm{rank}\left(\begin{bmatrix} G & H \end{bmatrix} \right) \leq n.
\end{equation}

The dimension of each minimal face $F_0 \subseteq F$ is equal to $n - \rho$. Note that every polyhedron has a finite number of minimal faces. In particular, if $\rho = n$, then each minimal face $F_0 \subseteq F$ consists of a single point called a \emph{vertex}. This gives rise to the following useful characterizations of vertices. 

\begin{lemma} \label{vertex_chars}
Let $F \subseteq \R^n$ be a nonempty polyhedron given by \eqref{def_F} and let $\hat x \in F$. Then, the following statements are equivalent:
\begin{enumerate}
    \item[(i)] $\hat x$ is a vertex of $F$.
    \item[(ii)] $\hat x - \hat d \in F$ and $\hat x + \hat d \in F$ if and only if $\hat d = 0$.
    \item[(iii)] There exists a partition of $G = \begin{bmatrix} G^0 &  G^1\end{bmatrix}$ and a corresponding partition of $g^T = \begin{bmatrix} (g^0)^T & (g^1)^T \end{bmatrix}$ such that $(G^0)^T \hat x = g^0$, $(G^1)^T \hat x < g^1$, and the matrix $\begin{bmatrix} G^0 & H \end{bmatrix}$ has full row rank.
    \item[(iv)] There exists $a \in \R^n$ such that $\hat x$ is the unique optimal solution of $\min\{a^T x: x \in F\}$.
    \item[(v)] $0 \in \R^n$ is a vertex of $F_\infty$.
\end{enumerate}   
\end{lemma}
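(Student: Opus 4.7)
The plan is to take (iii) as the algebraic hub, establishing (i), (ii), and (iv) directly against (iii), and then connecting (v) via the resulting rank statement. Every equivalence traces back to the minimal face description \eqref{min_face_def}.

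For (i) $\Leftrightarrow$ (iii), I would partition $G = [G^0 \ G^1]$ at $\hat x$ into active and strict inequality rows, so that $(G^0)^T \hat x = g^0$ and $(G^1)^T \hat x < g^1$. The smallest face of $F$ containing $\hat x$ then has affine hull $\{x \in \R^n : (G^0)^T x = g^0,\ H^T x = h\}$, so $\hat x$ is a vertex precisely when this affine subspace reduces to the singleton $\{\hat x\}$, equivalently when the system $(G^0)^T x = g^0,\ H^T x = h$ has a unique solution, equivalently when $[G^0 \ H]$ has full row rank $n$.

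For (iii) $\Leftrightarrow$ (ii), the condition $\hat x \pm \hat d \in F$ forces $(G^0)^T \hat d = 0$ (from $\pm (G^0)^T \hat d \leq 0$ at the active rows) and $H^T \hat d = 0$, so (iii) forces $\hat d = 0$; contrapositively, any nonzero $\hat d$ in the left null space of $[G^0 \ H]$, scaled by a small $\epsilon > 0$ so that $(G^1)^T (\hat x \pm \epsilon \hat d) < g^1$ is preserved, yields $\hat x \pm \epsilon \hat d \in F$. For (iii) $\Leftrightarrow$ (iv), in the forward direction I would take $a$ to be the negative of the sum of the columns of $G^0$, so $a^T x \geq a^T \hat x$ on $F$ with equality iff $(G^0)^T x = g^0$, forcing $x = \hat x$ by (iii) and yielding uniqueness. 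Conversely, if $\hat x$ is the unique minimizer of $a^T x$ and $\hat x \pm \hat d \in F$, then $a^T \hat d = 0$ makes both $\hat x \pm \hat d$ optimal, forcing $\hat d = 0$.

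Finally, for the link to (v), the lineality space of $F_\infty$ is $\{d \in \R^n : G^T d = 0,\ H^T d = 0\}$, of dimension $n - \rho$, so $0$ is a vertex of $F_\infty$ iff $\rho = n$. The implication (i) $\Rightarrow$ (v) is immediate from (iii), since $[G^0 \ H]$ is a submatrix of $[G \ H]$ and the rank $n$ condition propagates. I expect the main obstacle to be the reverse direction, because (v) is a property of the polyhedron as a whole while (iii) depends on the specific active set at $\hat x$; recovering a rank-$n$ active submatrix at $\hat x$ from the structural condition $\rho = n$ will have to exploit the other characterizations already established at $\hat x$, rather than follow purely from pointedness of $F_\infty$.
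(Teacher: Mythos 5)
Your treatment of (i)--(iv) is correct, and it is the standard argument; note that the paper itself offers no proof of this lemma (it is stated in the preliminaries with Schrijver cited for proofs), so there is no paper-specific route to match. Using (iii) as the hub works: identifying the minimal face containing $\hat x$ with the affine subspace $\{x : (G^0)^T x = g^0,\ H^T x = h\}$ (legitimate because the strict inequalities $(G^1)^T\hat x < g^1$ give a relatively open neighbourhood of $\hat x$ inside $F$ within that subspace), the perturbation argument for (ii), and the choice $a = -G^0 e$ for (iv) all go through --- including the degenerate case $m_0=0$, where $a=0$ but then $H$ alone has rank $n$, forcing $F=\{\hat x\}$ and hence trivial uniqueness.

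The obstacle you flag at the end is real, and you should not try to overcome it: the implication (v) $\Rightarrow$ (i) is false as literally stated. Take $n=1$, $F=[0,1]$, $\hat x = \tfrac12$: then $F_\infty=\{0\}$, so $0$ is a vertex of $F_\infty$ and (v) holds, yet $\hat x$ is not a vertex of $F$. Conditions (i)--(iv) are statements about the particular point $\hat x$, whereas (v) is a property of $F$ alone --- it is equivalent to $\rho=n$, i.e.\ to $F$ having \emph{some} vertex --- so the two can only be equivalent in the existential sense. This matches how the paper actually uses these facts: Lemma~\ref{rec_cone_results} records precisely the existential version ($F$ has no vertices iff $F_\infty$ has no vertices). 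The correct completion of your proof is therefore: (iii) $\Rightarrow$ (v) exactly as you argue (rank $n$ propagates from the submatrix $\begin{bmatrix} G^0 & H\end{bmatrix}$ to $\begin{bmatrix} G & H\end{bmatrix}$, and at $0\in F_\infty$ every inequality is active so (iii) applied to $F_\infty$ requires the full matrix to have rank $n$); conversely (v) yields only that \emph{some} point of $F$ satisfies (i)--(iv), not that the given $\hat x$ does. Record the counterexample and this corrected reading rather than searching for an argument that recovers a rank-$n$ active submatrix at the given $\hat x$ --- no such argument exists.
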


Next, we collect several results concerning the recession cone $F_\infty$ given by \eqref{def_Dinf}.

\begin{lemma} \label{rec_cone_results}
Let $F \subseteq \R^n$ be a nonempty polyhedron given by \eqref{def_F}. Then, the following statements are equivalent:
\begin{enumerate}
    \item[(i)] $F$ has no vertices.
    \item[(ii)] $F$ contains a line.
    \item[(iii)] $\rho < n$, where $\rho$ is defined as in \eqref{rank-const-mat}.
    \item[(iv)] There exists $\hat d \in \R^n \backslash \{0\}$ such that $\hat d \in F_\infty$ and $-\hat d \in F_\infty$ (i.e., $F_\infty$ contains a line).
    \item[(v)] $F_\infty$ has no vertices.
\end{enumerate}
\end{lemma}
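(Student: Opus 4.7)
The plan is to establish a short cycle $(i) \Leftrightarrow (iii) \Leftrightarrow (iv) \Leftrightarrow (ii)$ and then obtain $(v)$ directly from Lemma \ref{vertex_chars}. Almost every ingredient has already been developed in the preliminaries, so the proof is essentially bookkeeping on top of Section \ref{Sec2}.

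For $(i) \Leftrightarrow (iii)$, I would invoke the dimension formula for minimal faces from the paragraph containing \eqref{rank-const-mat}: every nonempty polyhedron has at least one minimal face, and each minimal face has dimension $n - \rho$. Vertices are exactly the zero-dimensional minimal faces, so $F$ has a vertex iff $\rho = n$, and the contrapositive is the claim. For $(iii) \Leftrightarrow (iv)$, I would read off \eqref{def_Dinf} that $\hat d \in F_\infty$ and $-\hat d \in F_\infty$ together force $G^T \hat d = 0$ and $H^T \hat d = 0$, so the existence of a nonzero such $\hat d$ is equivalent to the null space of $[G \; H]^T$ being nontrivial, which is exactly $\rho < n$. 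For $(iv) \Leftrightarrow (ii)$, I would translate lines: a line $\{x_0 + t \hat d : t \in \R\} \subseteq F$ with $\hat d \neq 0$ yields $\pm \hat d \in F_\infty$ by subtracting $x_0$, and conversely any such $\hat d$ together with any $x_0 \in F$ (which exists because $F$ is nonempty) produces a line contained in $F$.

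The equivalence $(i) \Leftrightarrow (v)$ then follows from Lemma \ref{vertex_chars}(v): $\hat x \in F$ is a vertex of $F$ iff $0$ is a vertex of $F_\infty$. Since $F_\infty$ is a cone containing $0$, the apex $0$ is the only possible vertex: any nonzero $d \in F_\infty$ is the midpoint of $\tfrac{1}{2} d$ and $\tfrac{3}{2} d$, both in $F_\infty$, which violates the midpoint characterization of Lemma \ref{vertex_chars}(ii) applied to $F_\infty$ at the point $d$. Hence $F$ has a vertex iff $F_\infty$ has a vertex, and the contrapositive gives the stated equivalence.

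I do not anticipate a serious obstacle: the statement is really a reorganization of facts already established in Section \ref{Sec2}. The only point worth a moment's attention is ensuring that Lemma \ref{vertex_chars}(v) is applied to a nonempty polyhedron (so that some $\hat x \in F$ is available), and observing that a cone can have no vertex other than its apex; both are routine.
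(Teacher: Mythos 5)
Your argument is correct. Note, though, that the paper offers no proof of Lemma~\ref{rec_cone_results} at all: it is stated as a standard preliminary fact with a blanket reference to Schrijver, so there is no in-paper proof to compare against. Your derivation from the other stated preliminaries is sound: $(i)\Leftrightarrow(iii)$ from the dimension formula $n-\rho$ for minimal faces, $(iii)\Leftrightarrow(iv)$ from the observation that $\pm\hat d\in F_\infty$ forces $G^T\hat d=0$ and $H^T\hat d=0$ so that such a nonzero $\hat d$ exists exactly when $[G\;\;H]^T$ has nontrivial null space, $(iv)\Leftrightarrow(ii)$ by the standard translation between lines in $F$ and lines in $F_\infty$, and $(i)\Leftrightarrow(v)$ via Lemma~\ref{vertex_chars}(v) together with the fact that the apex is the only candidate vertex of a cone. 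A marginally cleaner route to $(v)$, which avoids leaning on the somewhat loosely stated Lemma~\ref{vertex_chars}(v), is to apply your own equivalence $(i)\Leftrightarrow(iii)$ to the polyhedron $F_\infty$ itself, since it is defined by the same constraint matrix $[G\;\;H]$ with zero right-hand sides and therefore has the same $\rho$.
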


Recall that $F$ is a polytope if it is bounded. In this case, $F_\infty = \{0\}$. We next state a useful characterization of polytopes. 

\begin{lemma} \label{bounded-F}
Let $F \subseteq \R^n$ be a nonempty polyhedron given by \eqref{def_F}. Then, $F$ is bounded if and only if, for every $z \in \R^n$, there exists $(u,v) \in \R^m \times \R^p$ such that 
\begin{equation} \label{span-F}
G u + H w = z, \quad u \geq 0.
\end{equation}
\end{lemma}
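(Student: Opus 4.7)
The plan is to reduce the statement to the well-known equivalence that a nonempty polyhedron $F$ is bounded if and only if its recession cone $F_\infty$ is trivial (this is implicit in Lemma~\ref{rec_cone_results} and is a standard consequence of the Minkowski-Weyl representation of $F$), and then to convert the condition $F_\infty=\{0\}$ into the stated surjectivity condition by invoking Farkas' lemma for polyhedral cones. Throughout I will write $w$ for the second component of the pair in place of the paper's $v$ (which appears to be a typo).

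For the easy direction, I would assume that for every $z\in\R^n$ there exist $u\in\R^m$ with $u\geq 0$ and $w\in\R^p$ such that $Gu+Hw=z$, and deduce $F_\infty=\{0\}$. Picking any $d\in F_\infty$, so that $G^T d\leq 0$ and $H^T d=0$, and applying the hypothesis with $z:=d$, I obtain $u\geq 0$ and $w$ with $Gu+Hw=d$, whence
\[
\|d\|^2 \;=\; d^T(Gu+Hw) \;=\; (G^T d)^T u + (H^T d)^T w \;\leq\; 0,
\]
since $G^T d\leq 0$, $u\geq 0$, and $H^T d=0$. Hence $d=0$, so $F_\infty=\{0\}$ and $F$ is bounded.

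For the harder direction, I would assume $F$ is bounded, so that $F_\infty=\{0\}$, and prove the surjectivity condition by contradiction using a separation argument. Let $\cC:=\{Gu+Hw : u\geq 0,\; w\in\R^p\}$; this is the image of the polyhedral cone $\R^m_+\times\R^p$ under a linear map, and so is itself a finitely generated (hence closed) convex cone by the Minkowski-Weyl theorem. If some $z\in\R^n$ fails to lie in $\cC$, the separation theorem produces $d\in\R^n$ with $d^T y\leq 0$ for every $y\in\cC$ and $d^T z>0$. Letting $w$ vary over all of $\R^p$ forces $H^T d=0$, while choosing $u$ successively to be each standard basis vector of $\R^m_+$ yields $G^T d\leq 0$. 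Thus $d\in F_\infty$, and $d\neq 0$ since $d^T z>0$, contradicting $F_\infty=\{0\}$.

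The main obstacle is the justification that $\cC$ is closed, which is exactly what legitimises the separation step; this is precisely the content of Farkas' lemma in its polyhedral form and follows from the same Minkowski-Weyl theorem already used in Section~\ref{Sec2}. Once closedness is secured, both directions are short manipulations, and the argument is entirely symmetric in the pairing between $F_\infty$ and $\cC$.
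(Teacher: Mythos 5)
Your proof is correct and follows essentially the same route as the paper: both reduce boundedness of $F$ to $F_\infty=\{0\}$ and then pass between that condition and the solvability of \eqref{span-F} via a Farkas-type duality. The only difference is that the paper invokes linear programming duality as a black box, whereas you prove the needed Farkas step directly (weak duality by a one-line computation, and the converse by separating $z$ from the closed, finitely generated cone $\cC$).
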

\begin{proof}
Since $F$ is nonempty, the boundedness of $F$ is equivalent to 
\[
F_\infty = \left\{d \in \R^n: G^T d \leq 0, \quad H^T d = 0 \right\} = \{0\}. 
\]
Therefore, $F$ is bounded if and only if, for every $z \in \R^n$, the optimal value of the linear programming problem
\[
\max\{z^T d: G^T d \leq 0, \quad H^T d = 0\}
\]
is equal to zero. The assertion follows from linear programming duality.
\end{proof}

Finally, we close this section with a useful decomposition result.

\begin{lemma} \label{decomp_result}
Let $F \subseteq \R^n$ be a nonempty polyhedron given by \eqref{def_F}, and let $F_i \subseteq F,~i = 1,\ldots,s$ denote the set of minimal faces of $F$. Then, 
\begin{equation} \label{decomp_polyhedra}
F = \textrm{conv}\left(\left\{v^1,\ldots,v^s\right\}\right) + F_\infty,    
\end{equation}
where $v^i \in F_i,~i = 1,\ldots,s$, and $F_\infty$ is given by \eqref{def_Dinf}.
\end{lemma}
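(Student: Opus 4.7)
The plan is to reduce to the pointed case by quotienting out the lineality space of $F_\infty$. Let $L = \{d \in \R^n : G^T d = 0,~H^T d = 0\}$ denote the lineality space; since $[G\;H]$ has rank $\rho$, we have $\dim L = n - \rho$, and $L \subseteq F_\infty$. From the defining description \eqref{min_face_def}, each minimal face $F_i$ is an affine subspace of dimension $n - \rho$ whose direction subspace contains $L$ and hence equals $L$ by a dimension count; thus $F_i = v^i + L$ for any $v^i \in F_i$, and in particular $F + L = F$.

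Let $\pi: \R^n \to L^\perp$ denote the orthogonal projection and set $F' = \pi(F)$. Because $F + L = F$, one checks that $F' = F \cap L^\perp$, so $F'$ is polyhedral. Its recession cone is $F_\infty \cap L^\perp$, whose lineality $L \cap L^\perp$ is trivial, so $F'$ has vertices by Lemma \ref{rec_cone_results}. The pointed case of the Minkowski--Weyl theorem, applied to $F'$, yields $F' = \textrm{conv}(\{w^1, \ldots, w^s\}) + (F_\infty \cap L^\perp)$, where $\{w^1, \ldots, w^s\}$ are the vertices of $F'$.

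I would then establish a bijection between vertices of $F'$ and minimal faces of $F$: each vertex $w^i$ satisfies the rank condition of Lemma \ref{vertex_chars}(iii) relative to the constraints defining $F'$, and lifting the active rows of $G^T w^i \leq g$ produces a minimal face $F_i = w^i + L$ of $F$; conversely, each minimal face meets $L^\perp$ in a single point that must be a vertex of $F'$. Given any $x \in F$, writing $x = \pi(x) + (x - \pi(x))$ and expanding $\pi(x) \in F'$ then gives $x \in \textrm{conv}(\{w^1, \ldots, w^s\}) + F_\infty$, since $x - \pi(x) \in L \subseteq F_\infty$. To accommodate an arbitrary choice $v^i \in F_i$, write $v^i = w^i + \ell^i$ with $\ell^i \in L$; substituting $w^i$ by $v^i - \ell^i$ changes the recession component by $\sum_i \lambda_i \ell^i \in L \subseteq F_\infty$, which is harmlessly absorbed.

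The main obstacle is the vertex/minimal-face correspondence in the third step: one must carefully track how the active-constraint and rank characterizations of Lemma \ref{vertex_chars} transfer between the constraints defining $F$ and those defining its orthogonal projection $F'$, in order to confirm that there are exactly $s$ vertices, one for each minimal face. Everything else is routine manipulation of the decomposition and the containment $L \subseteq F_\infty$.
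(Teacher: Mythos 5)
The paper gives no proof of Lemma~\ref{decomp_result}: it is stated in Section~\ref{Sec2} among the standard polyhedral facts for which the reader is referred to Schrijver. Your argument is the standard textbook route — factor out the lineality space $L=\{d\in\R^n: G^Td=0,\ H^Td=0\}$, apply the pointed-case decomposition to $F'=F\cap L^\perp$, and lift back — and it is essentially correct. The step you flag as the main obstacle does go through, and the key observation is that the column space of $\begin{bmatrix} G & H\end{bmatrix}$ is exactly $L^\perp$, of dimension $\rho$ as in \eqref{rank-const-mat}: for a minimal face $F_i=\{x:(G^0)^Tx=g^0,\ H^Tx=h\}$ the paper's dimension statement forces $\textrm{rank}\begin{bmatrix} G^0 & H\end{bmatrix}=\rho$, so the columns of $\begin{bmatrix} G^0 & H\end{bmatrix}$ already span $L^\perp$ and adjoining a basis of $L$ gives a rank-$n$ system; hence $F_i\cap L^\perp$ is a single point, which is a vertex of $F'$ by Lemma~\ref{vertex_chars}(iii). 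Conversely, for a vertex $w$ of $F'$ with active set $G^0$ among the original inequalities, the same count gives $\textrm{rank}\begin{bmatrix} G^0 & H\end{bmatrix}=\rho$, and since $w+L\subseteq F$ one checks that $\{x\in F:(G^0)^Tx=g^0\}=w+L$ is an affine subspace, i.e.\ a minimal face. Two minor caveats: you invoke the pointed case of the decomposition theorem as a black box, so your proof is a reduction to that special case rather than a self-contained argument (acceptable, since the paper treats the whole lemma as citable); and the final absorption of the translations $\ell^i\in L$ should be justified by $L\subseteq F_\infty$ together with $F_\infty+F_\infty=F_\infty$, both of which hold since $F_\infty$ is a convex cone containing $L$.
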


\section{Polyhedral Properties of RLT Relaxations} \label{Sec3}

In this section, given an instance of (QP), we introduce the corresponding RLT (reformulation-linearization technique) relaxation. We then focus on the relations between the polyhedral properties of the feasible region of (QP) and that of its RLT relaxation. We establish several connections between recession directions, boundedness, and vertices of the two feasible regions. For a specific class of quadratic programs, we give a complete characterization of the set of vertices of the feasible region of the RLT relaxation. We finally discuss the implications of this observation on RLT relaxations of general quadratic programs.

\subsection{RLT Relaxations}

Recall that an instance of (QP) is completely specified by the objective function $q(x)$ and the feasible region $F$ given by \eqref{def_qx} and \eqref{def_F}, respectively. The RLT relaxation of (QP) is obtained by generating quadratic constraints implied by linear constraints. Such quadratic constraints are obtained by multiplying each pair of linear inequality constraints and by multiplying each linear equality constraint by a variable. Note that it is not necessary to add the quadratic constraints obtained from multiplying each pair of equality constraints since they are already implied by the aforementioned procedure (see, e.g.,~\cite[Remark 1]{sherali1995reformulation}). The resulting quadratic constraints and the objective function are then linearized by substituting each quadratic term $x_i x_j$ by a new variable $X_{ij},~i = 1,\ldots,n; j = 1,\ldots,n$. 

For a given instance of (QP), the RLT relaxation of (QP) is therefore given by
\[
\textrm{(RLT)} \quad \ell^*_R = \min\limits_{x \in \R^n, X \in \cS^n} \left\{\textstyle\frac{1}{2}\langle Q, X \rangle + c^T x : (x,X) \in \cF\right\},
\]
where 
\begin{equation} \label{def_cF}
\cF = \left\{(x,X) \in \R^n \times \cS^n: \begin{array}{rcl}     
G^T x & \leq & g\\
H^T x & = & h \\
H^T X & = & h x^T \\
G^T X G - G^T x g^T - g x^T G + g g^T & \geq & 0
\end{array}
\right\}.
\end{equation}

Note that (RLT) is a linear programming relaxation of (QP) since, for each $\hat x \in F$, we have $(\hat x, \hat x \hat x^T) \in \cF$ with the same objective function value. Therefore, 
\begin{equation} \label{rlt-lb}
    \ell^*_R \leq \ell^*.
\end{equation}

\subsection{Recession Cones and Boundedness}

In this section, we present several relations between the recession cones associated with the polyhedral feasible regions $F$ and $\cF$ of (QP) and (RLT), respectively. We also discuss the boundedness relation between $F$ and $\cF$.

Recall that the recession cone of $F$, denoted by $F_\infty$, is given by \eqref{def_Dinf}. Similarly, we use $\cF_\infty$ to denote the recession cone of $\cF$, which is given by 
\begin{equation} \label{def_cDinf}
\cF_\infty = \left\{(d,D) \in \R^n \times \cS^n: \begin{array}{rcl} G^T d & \leq & 0 \\ H^T d & = & 0 \\ H^T D - h d^T & = & 0 \\ 
G^T D G - G^T d g^T - g d^T G & \geq & 0 \end{array} \right\}.     
\end{equation}

Note that 
\begin{equation} \label{rec_dir_imp}
(\hat d, \hat D) \in \cF_\infty \Longrightarrow \hat d \in F_\infty.
\end{equation}

Our next result gives a recipe for constructing recession directions of $\cF$ from recession directions of $F$.

\begin{lemma} \label{rel-rec-cones}
Let $F \subseteq \R^n$ be a nonempty polyhedron given by \eqref{def_F} and let $P  = \begin{bmatrix} d^1 & \cdots & d^t \end{bmatrix} \in \R^{n \times t}$, where $d^1,\ldots,d^t$ are defined as in \eqref{def_Dinf_alt}. Then, for each $\hat d \in F_\infty$, each $\hat x \in F$, and each $\hat K \in \cN^t$, we have $(\hat d, \hat D) \in \cF_\infty$, where $\hat D = 
\hat x \hat d^T + \hat d \hat x^T + P \hat K P^T \in \cS^n$.
\end{lemma}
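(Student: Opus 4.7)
The plan is to verify directly that the pair $(\hat d, \hat D)$ satisfies each of the four defining inequalities/equalities of $\cF_\infty$ in \eqref{def_cDinf}. The first two constraints, $G^T \hat d \leq 0$ and $H^T \hat d = 0$, are immediate from $\hat d \in F_\infty$ and involve only $\hat d$, so nothing new needs to be done there. The real work is in checking the two ``product'' constraints, which involve $\hat D$.

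For the equality $H^T \hat D = h \hat d^T$, I would expand
\[
H^T \hat D = H^T \hat x\, \hat d^T + H^T \hat d\, \hat x^T + (H^T P)\, \hat K\, P^T
\]
and use three facts: $H^T \hat x = h$ because $\hat x \in F$; $H^T \hat d = 0$ because $\hat d \in F_\infty$; and $H^T P = 0$ because each column $d^j$ of $P$ lies in $F_\infty$ by construction. This kills two of the three terms and leaves exactly $h \hat d^T$.

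For the inequality $G^T \hat D G - G^T \hat d\, g^T - g\, \hat d^T G \geq 0$, I would substitute the definition of $\hat D$ and regroup to obtain
\[
\bigl(G^T \hat x - g\bigr)\bigl(G^T \hat d\bigr)^T \;+\; \bigl(G^T \hat d\bigr)\bigl(G^T \hat x - g\bigr)^T \;+\; (G^T P)\, \hat K\, (G^T P)^T.
\]
In the first two outer products, both factors are entrywise nonpositive ($G^T \hat x - g \leq 0$ from $\hat x \in F$, and $G^T \hat d \leq 0$ from $\hat d \in F_\infty$), so each outer product is entrywise nonnegative. For the third term, set $M = G^T P$; each column $G^T d^j$ is nonpositive, so $M \leq 0$, and since $\hat K \in \cN^t$ is entrywise nonnegative, the $(i,j)$ entry of $M \hat K M^T$ is $\sum_{k,\ell} M_{ik}\, \hat K_{k\ell}\, M_{j\ell}$, a sum of products of two nonpositive numbers and one nonnegative number, hence nonnegative. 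Summing the three nonnegative matrices gives the required inequality.

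There is no real obstacle here beyond careful bookkeeping; the only mildly delicate point is recognizing that one should separate $\hat D$ into the rank-one symmetric piece $\hat x \hat d^T + \hat d \hat x^T$ (which produces the correction terms matching $G^T \hat d\, g^T + g\, \hat d^T G$ via the bound $G^T \hat x \leq g$) and the conic combination $P \hat K P^T$ of recession-direction outer products (which is handled purely by the sign pattern of $G^T P$ together with $\hat K \geq 0$). After that observation, the verification is routine.
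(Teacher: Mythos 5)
Your proof is correct and follows essentially the same route as the paper's: verify the two linear constraints directly from $\hat d \in F_\infty$, cancel terms in $H^T \hat D - h\hat d^T$ using $H^T\hat x = h$, $H^T\hat d = 0$, $H^T P = 0$, and regroup the product constraint as $(G^T\hat x - g)(G^T\hat d)^T + (G^T\hat d)(G^T\hat x - g)^T + (G^T P)\hat K (G^T P)^T$ with a sign argument. The only difference is that you spell out the entrywise nonnegativity of $(G^T P)\hat K (G^T P)^T$, which the paper simply asserts.
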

\begin{proof}
Since $\hat d \in F_\infty$, we have $G^T \hat d \leq 0$ and $H^T \hat d = 0$ by \eqref{def_Dinf}. Furthermore, we have
\[
H^T \hat D - h \hat d^T = H^T \hat x \hat d^T + H^T \hat d \hat x^T + H^T P \hat K P^T - h \hat d^T = h \hat d^T - h \hat d^T = 0,
\]
where we used $H^T \hat x = h$, $H^T \hat d = 0$, and $H^T P = 0$. Finally,
\begin{footnotesize}
\begin{eqnarray*}
G^T \hat D G - G^T \hat d g^T - g \hat d^T G & = &  (G^T \hat x) (G^T \hat d)^T + (G^T \hat d) (G^T \hat x)^T + G^T P \hat K P^T G -  (G^T \hat d) g^T - g (G^T \hat d)^T \\
 & = & (G^T \hat x - g) (G^T \hat d)^T + (G^T \hat d) (G^T \hat x - g)^T + (G^T P) \hat K (G^T P)^T\\
 & \geq & 0,
\end{eqnarray*}
\end{footnotesize}
where the last inequality follows from $G^T \hat x \leq g$, $G^T \hat d \leq 0$, $G^T P \leq 0$, and $\hat K \geq 0$. Therefore, $(\hat d, \hat D) \in \cF_\infty$ by \eqref{def_cDinf}.
\end{proof}

Next, we discuss the boundedness relation between $F$ and $\cF$. 

\begin{lemma} \label{bounded_imp}
$F$ is nonempty and bounded if and only if $\cF$ is nonempty and bounded.
\end{lemma}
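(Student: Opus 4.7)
The plan is to prove the equivalence direction by direction, with nonemptiness handled by straightforward constructions and boundedness requiring slightly more care.

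For nonemptiness, I would observe that $(\hat x,\hat x\hat x^T)\in\cF$ for every $\hat x\in F$. The only nontrivial constraint to verify is $G^T(\hat x\hat x^T)G-G^T\hat x g^T-g\hat x^T G+gg^T=(G^T\hat x-g)(G^T\hat x-g)^T\geq 0$, which holds componentwise because $G^T\hat x-g\leq 0$ makes every entry a product of two nonpositive numbers. The reverse implication is immediate: any $(\hat x,\hat X)\in\cF$ projects to $\hat x\in F$ via the first two constraint blocks in \eqref{def_cF}.

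For the direction $F$ bounded $\Rightarrow$ $\cF$ bounded, I would show $\cF_\infty=\{(0,0)\}$. Let $(\hat d,\hat D)\in\cF_\infty$. By the implication \eqref{rec_dir_imp}, $\hat d\in F_\infty=\{0\}$, so $\hat d=0$, and the defining conditions of $\cF_\infty$ in \eqref{def_cDinf} collapse to $H^T\hat D=0$ (hence also $\hat D H=0$ by symmetry of $\hat D$) and $G^T\hat D G\geq 0$ componentwise. The key step, and the main obstacle, is then to deduce $\hat D=0$. Here Lemma~\ref{bounded-F} is the essential tool: since $F$ is bounded, every $z\in\R^n$ admits a representation $z=Gu+Hw$ with $u\geq 0$. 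Pick arbitrary $z_1,z_2\in\R^n$ with such representations $z_i=Gu_i+Hw_i$, $u_i\geq 0$. Expanding $z_1^T\hat D z_2$ and using $H^T\hat D=0=\hat D H$ to kill the cross and pure-$H$ terms leaves $z_1^T\hat D z_2=u_1^T(G^T\hat D G)u_2\geq 0$. Replacing $z_2$ with $-z_2$ (which also admits a nonnegative-$u$ representation, again by Lemma~\ref{bounded-F}) flips the sign and gives $z_1^T\hat D z_2\leq 0$. Combining these, $z_1^T\hat D z_2=0$ for all $z_1,z_2$, so $\hat D=0$.

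For the converse direction, $\cF$ bounded $\Rightarrow$ $F$ bounded, I would argue by contrapositive. Suppose $F$ is unbounded; then there exists $\hat d\in F_\infty\setminus\{0\}$. Since $\cF$ is nonempty, $F$ is nonempty, so we may pick any $\hat x\in F$. Lemma~\ref{rel-rec-cones} with $\hat K=0$ then produces $(\hat d,\hat x\hat d^T+\hat d\hat x^T)\in\cF_\infty$, and this element is nonzero because its first component $\hat d$ is. Hence $\cF_\infty\neq\{(0,0)\}$, contradicting boundedness of $\cF$. This direction is essentially just a direct invocation of Lemma~\ref{rel-rec-cones}, so no real obstacle arises; the substantive work is concentrated in the forward direction above.
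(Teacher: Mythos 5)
Your proof is correct and follows essentially the same route as the paper: the same construction $(\hat x,\hat x\hat x^T)$ for nonemptiness, and the same key argument via Lemma~\ref{bounded-F} showing $\cF_\infty=\{(0,0)\}$, with your explicit $z_2\mapsto -z_2$ sign-flip merely spelling out what the paper leaves implicit. The only (immaterial) difference is in the converse boundedness direction, where the paper simply notes that $F$ is the projection of $\cF$ onto the $x$-space, whereas you argue by contrapositive through Lemma~\ref{rel-rec-cones}; both are valid.
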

\begin{proof}
Suppose that $F$ is nonempty and bounded. Then, $F_\infty = \{0\}$. Clearly, $\cF$ is nonempty since, for each $\hat x \in F$, we have $(\hat x, \hat x \hat x^T) \in \cF$. Let $(\hat d, \hat D) \in \cF_\infty$. By \eqref{rec_dir_imp}, we obtain $\hat d \in F_\infty$, which implies that $\hat d = 0$. By \eqref{def_cDinf}, 
\[
H^T \hat D = 0, \quad G^T \hat D G \geq 0.
\]
By Lemma~\ref{bounded-F}, for every $z^1 \in \R^n$ and $z^2 \in \R^n$, there exist $u^1 \in \R^m_+$, $w^1 \in \R^p$, $u^2 \in \R^m_+$, and $w^2 \in \R^p$ such that $G u^1 + H w^1 = z^1$ and $G u^2 + H w^2 = z^2$. Therefore, for every $z^1 \in \R^n$ and $z^2 \in \R^n$,
\[
(z^1)^T \hat D z^2 = (G u^1 + H w^1)^T \hat D (G u^2 + H w^2) = (u^1)^T G^T \hat D G u^2 \geq 0,
\]
where we used $H^T \hat D = 0$, $G^T \hat D G \geq 0$, $u^1 \geq 0$, and $u^2 \geq 0$. Since the inequality above  holds for every $z^1 \in \R^n$ and $z^2 \in \R^n$, we obtain $\hat D = 0$. Therefore, $(\hat d, \hat D) = (0,0)$, which implies that $\cF_\infty = \{(0,0)\}$. It follows that $\cF$ is bounded. 

Conversely, if $\cF$ is nonempty and bounded, then $F$ is nonempty and bounded since $F$ is the projection of $\cF$ onto the $x$-space.
\end{proof}

\subsection{Vertices} \label{vertices}

In this section, we focus on the relations between the vertices of $F$ and those of $\cF$. First, we consider the case in which $F$ has no vertices.

\begin{lemma} \label{novertex_imp}
Suppose that $F$ is nonempty. If $F$ has no vertices, then $\cF$ has no vertices.
\end{lemma}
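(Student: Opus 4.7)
The plan is to exploit Lemma~\ref{rec_cone_results} applied to both polyhedra $F$ and $\cF$ to reduce the claim to a statement about recession cones, and then use Lemma~\ref{rel-rec-cones} to lift a line from $F_\infty$ to a line in $\cF_\infty$.

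First, I would invoke Lemma~\ref{rec_cone_results} applied to $F$: since $F$ is nonempty and has no vertices, there exists a nonzero direction $\hat d \in \R^n$ such that both $\hat d \in F_\infty$ and $-\hat d \in F_\infty$, i.e., $F_\infty$ contains a line through the origin. Since $F$ is nonempty, I also fix an arbitrary $\hat x \in F$.

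Next, I would apply Lemma~\ref{rel-rec-cones} twice, choosing $\hat K = 0$ in each case. Setting $\hat D = \hat x \hat d^T + \hat d \hat x^T \in \cS^n$, this yields $(\hat d, \hat D) \in \cF_\infty$. Applying the same construction to $-\hat d \in F_\infty$ (still with $\hat K = 0$) produces the direction $(-\hat d, -\hat D) \in \cF_\infty$. Because $\hat d \neq 0$, the pair $(\hat d,\hat D) \in \R^n \times \cS^n$ is nonzero, so $\cF_\infty$ contains a line.

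Finally, I would observe that $\cF$ is a polyhedron in the finite-dimensional space $\R^n \times \cS^n$, so Lemma~\ref{rec_cone_results} applies to it as well. Since $\cF_\infty$ contains a line, condition (iv) of that lemma is satisfied for $\cF$, and hence by the equivalence with condition (i) we conclude that $\cF$ has no vertices.

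The only subtle point, and what I expect to be the main obstacle, is making sure the direction $(\hat d, \hat D)$ produced by Lemma~\ref{rel-rec-cones} is genuinely nonzero in $\R^n \times \cS^n$; this is immediate since its $\R^n$-component $\hat d$ is already nonzero, so no further argument about $\hat D$ is needed. Everything else is a direct bookkeeping application of the two lemmas from the preceding sections.
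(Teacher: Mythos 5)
Your proof is correct and follows essentially the same route as the paper: extract a line direction $\hat d$ from $F_\infty$ via Lemma~\ref{rec_cone_results}~(iv), lift it to $(\pm\hat d, \pm\hat D)$ with $\hat D = \hat x \hat d^T + \hat d \hat x^T$ using Lemma~\ref{rel-rec-cones} with $\hat K = 0$, and conclude via Lemma~\ref{rec_cone_results} applied to $\cF$. Your explicit remark that nonzeroness of the lifted direction follows from $\hat d \neq 0$ is a nice touch the paper leaves implicit.
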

\begin{proof}
By Lemma~\ref{rec_cone_results}~(iv), 
there exists a nonzero $\hat d \in \R^n \backslash \{0\}$ such that $\hat d \in F_\infty$ and $-\hat d \in F_\infty$. Let $\hat x \in F$ and define $\hat D = \hat x \hat d^T + \hat d \hat x^T$. By Lemma~\ref{rel-rec-cones}, we obtain $(\hat d, \hat D) \in \cF_\infty$ and $-(\hat d, \hat D) \in \cF_\infty$, which implies that $\cF$ contains a line. By Lemma~\ref{rec_cone_results}, $\cF$ has no vertices. 
\end{proof}

Before we present the relations between the set of vertices of $F$ and that of $\cF$, we state a useful technical lemma that will be helpful in the remainder of this section.

\begin{lemma} \label{matrix-equation}
Let $A \in \R^{n \times k}$ and $Z \in \cS^k$. Then, the system $A^T W A = Z$ has a solution $W \in \cS^n$ if and only if the range space of $Z$ is contained in the range space of $A^T$. Furthermore, if $A$ has full row rank, then the solution is unique.
\end{lemma}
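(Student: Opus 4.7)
The plan is to treat the two directions of the equivalence separately and then handle uniqueness as a corollary of injectivity of the map $W \mapsto A^T W A$ under the full-row-rank hypothesis.

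For the forward ($\Rightarrow$) direction, I would simply note that if $A^T W A = Z$ for some $W \in \cS^n$, then each column of $Z$ has the form $A^T W a_j$, where $a_j$ denotes the $j$-th column of $A$. Hence every column of $Z$ lies in the range of $A^T$, so $\mathrm{Range}(Z) \subseteq \mathrm{Range}(A^T)$. This is essentially a one-line observation.

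The main content lies in the backward ($\Leftarrow$) direction, which I would handle through the spectral decomposition of $Z$. Since $Z \in \cS^k$, write $Z = \sum_{i=1}^{r} \lambda_i v_i v_i^T$ where $r = \mathrm{rank}(Z)$, the $\lambda_i$ are the nonzero eigenvalues, and the $v_i \in \R^k$ form an orthonormal set of eigenvectors. Crucially, each such $v_i$ lies in $\mathrm{Range}(Z)$, which by hypothesis is contained in $\mathrm{Range}(A^T)$; hence there exist $u_i \in \R^n$ with $v_i = A^T u_i$ for $i = 1,\ldots,r$. Setting $W = \sum_{i=1}^{r} \lambda_i u_i u_i^T \in \cS^n$ and substituting gives $A^T W A = \sum_{i=1}^{r} \lambda_i (A^T u_i)(A^T u_i)^T = \sum_{i=1}^{r} \lambda_i v_i v_i^T = Z$, as required. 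The only subtle point here is to exploit symmetry of $Z$ so that a symmetric $W$ can be produced; the spectral decomposition makes this transparent and is, in my view, the key step of the argument.

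For uniqueness under the assumption that $A$ has full row rank, I would argue by linearity: if $W_1, W_2 \in \cS^n$ both satisfy the equation, then $W := W_1 - W_2 \in \cS^n$ satisfies $A^T W A = 0$. Since $A$ has full row rank, $A^T$ has full column rank, so $A^T y = 0$ forces $y = 0$ for any $y \in \R^n$; applying this columnwise to $W A$ gives $W A = 0$. But full row rank of $A$ also means that the columns of $A$ span $\R^n$, so $W A = 0$ forces $W = 0$, yielding $W_1 = W_2$. The only potential pitfall is keeping the row/column rank distinction straight when passing between $A$ and $A^T$; once that is done carefully, both parts of the conclusion follow cleanly.
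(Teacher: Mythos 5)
Your proof of the equivalence is essentially the paper's own argument: the forward direction by observing that $Z$ acts through $A^T$, and the backward direction via the spectral decomposition of $Z$, pulling each eigenvector back through $A^T$ and assembling a symmetric $W$ from the preimages. The only genuine divergence is in the uniqueness claim. The paper disposes of it with the terse remark that the matrix $U$ of preimages is uniquely determined; you instead argue by linearity that $A^T W A = 0$ together with full row rank of $A$ forces $WA = 0$ (since $A^T$ then has trivial kernel) and hence $W = 0$ (since the columns of $A$ span $\R^n$). Your route is cleaner and, frankly, more complete: it establishes injectivity of the linear map $W \mapsto A^T W A$ on all of $\cS^n$ rather than only uniqueness of one particular construction, and both rank deductions in your final step are handled correctly. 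No gaps.
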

\begin{proof}
If $A^T W A = Z$ has a solution $W \in \cS^n$, then, for any $y \in \R^k$, we have $Z y = A^T (W A y)$, which implies that the range space of $Z$ is contained in the range space of $A^T$. 

Conversely, let $Z = \sum\limits_{j=1}^\kappa \lambda_j z^j (z^j)^T$ denote the eigenvalue decomposition of $Z$, where $\kappa \leq k$ denotes the rank of $Z$ and $z^j \in \R^k,~j = 1,\ldots,\kappa$. By the hypothesis, the range space of $Z$, given by $\textrm{span}\{z^1,\ldots,z^\kappa\}$, is contained in the range space of $A^T$. Therefore, for each $j = 1,\ldots,\kappa$, there exists $u^j \in \R^n$ such that $z^j = A^T u^j$. It follows that $Z = A^T U \Lambda U^T A$, where $U = \begin{bmatrix} u^1 & \cdots & u^\kappa \end{bmatrix} \in \R^{n \times \kappa}$ and $\Lambda \in \cS^{\kappa}$ is a diagonal matrix whose entries are given by $\lambda_1,\ldots,\lambda_{\kappa}$. Therefore, $W = U \Lambda U^T$ is a solution of $A^T W A = Z$. 

If $A$ has full row rank, then the uniqueness of the solution $W \in \cS^n$ follows from the observation that the matrix $U$ is uniquely determined.
\end{proof}

We are now in a position to present the first relation between the set of vertices of $F$ and that of $\cF$.

\begin{proposition} \label{vertex1}
Suppose that $F$ is nonempty. Let $\hat x \in F$ and $\hat X = \hat x {\hat x}^T \in \cS^n$. Then, $(\hat x, \hat X)$ is a vertex of $\cF$ if and only if $\hat x$ is a vertex of $F$.   
\end{proposition}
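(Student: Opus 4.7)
The plan is to apply the vertex criterion in Lemma~\ref{vertex_chars}(ii) in both directions: a point is a vertex of a polyhedron if and only if it admits no nonzero perturbation for which both shifted points remain in the polyhedron. Throughout, the calculation rests on the identity
\begin{equation*}
G^T \hat x \hat x^T G - G^T \hat x g^T - g \hat x^T G + gg^T = (g - G^T \hat x)(g - G^T \hat x)^T,
\end{equation*}
which says that the RLT inequality holds with equality at $(\hat x, \hat x \hat x^T)$.

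For the direction $(\Rightarrow)$, the natural approach is the contrapositive. Suppose $\hat x$ is not a vertex of $F$. Then Lemma~\ref{vertex_chars}(ii) yields $\hat d \neq 0$ with $\hat x \pm \hat d \in F$, which forces $H^T \hat d = 0$ and $|(G^T \hat d)_j| \leq (g - G^T \hat x)_j$ for each $j$. The key move is to use the purely quadratic perturbation $(0, \hat d \hat d^T) \neq (0,0)$. For $(\hat x, \hat X \pm \hat d \hat d^T)$, the first three defining relations in \eqref{def_cF} are immediate, and the RLT inequality reduces, via the identity above, to the entrywise bound $(g - G^T \hat x)_j (g - G^T \hat x)_k \pm (G^T \hat d)_j (G^T \hat d)_k \geq 0$, which is a direct consequence of $|(G^T \hat d)_j| \leq (g - G^T \hat x)_j$. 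Lemma~\ref{vertex_chars}(ii) then precludes $(\hat x, \hat X)$ from being a vertex of $\cF$.

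For the direction $(\Leftarrow)$, suppose $\hat x$ is a vertex of $F$, and take any $(\tilde d, \tilde D) \in \R^n \times \cS^n$ with $(\hat x, \hat X) \pm (\tilde d, \tilde D) \in \cF$. Projecting to $x$-coordinates gives $\hat x \pm \tilde d \in F$, so Lemma~\ref{vertex_chars}(ii) applied to $F$ forces $\tilde d = 0$. With $\tilde d = 0$, the constraints collapse to $H^T \tilde D = 0$ together with the componentwise bound $|(G^T \tilde D G)_{jk}| \leq (g - G^T \hat x)_j (g - G^T \hat x)_k$. Letting $G^0$ denote the submatrix of $G$ consisting of the columns indexed by the active constraints at $\hat x$, the right-hand side vanishes whenever the row index corresponds to an active constraint, and so $(G^0)^T \tilde D G = 0$.

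The main obstacle is the rank chase that converts these relations into $\tilde D = 0$. Since $\hat x$ is a vertex, Lemma~\ref{vertex_chars}(iii) provides that $[G^0, H]$ has full row rank $n$. From $(G^0)^T \tilde D G = 0$ and $H^T \tilde D G = 0$, every column of $\tilde D G$ is orthogonal to $\textrm{range}([G^0, H]) = \R^n$, hence $\tilde D G = 0$. Combined with $\tilde D H = 0$ (from $H^T \tilde D = 0$ and symmetry of $\tilde D$), this yields $\tilde D [G, H] = 0$; Lemma~\ref{rec_cone_results} supplies $\textrm{rank}([G, H]) = n$ because $F$ has a vertex, which finally forces $\tilde D = 0$.
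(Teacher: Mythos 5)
Your proof is correct, and although it rests on the same two pillars as the paper's argument --- the two-sided perturbation test for vertices (Lemma~\ref{vertex_chars}(ii)) and the identity $G^T\hat x\hat x^T G - G^T\hat x g^T - g\hat x^T G + gg^T = rr^T$ with $r = g - G^T\hat x$ --- the details differ in both directions. In the direction where $\hat x$ fails to be a vertex, the paper perturbs by $\epsilon(\hat d,\, \hat d\hat x^T + \hat x\hat d^T)$ and needs a small-$\epsilon$ argument to preserve the strictly inactive constraints; your purely quadratic perturbation $(0,\hat d\hat d^T)$ is feasible at full scale because $|(G^T\hat d)_j| \le r_j$ gives $r_j r_k \pm (G^T\hat d)_j(G^T\hat d)_k \ge 0$ entrywise, which is a cleaner choice. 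In the direction where $\hat x$ is a vertex, the paper enumerates the active RLT constraints, extracts $\hat d = 0$ from the full row rank of $\begin{bmatrix} G^0 & H\end{bmatrix}$, and gets $\hat D = 0$ from Lemma~\ref{matrix-equation}; you instead obtain $\tilde d = 0$ for free by projecting onto the $x$-space and invoking the vertex property of $\hat x$ in $F$, and you close with a direct rank chase ($\tilde D G = 0$, $\tilde D H = 0$, and $\mathrm{rank}(\begin{bmatrix} G & H\end{bmatrix}) = n$ via Lemma~\ref{rec_cone_results}) that bypasses Lemma~\ref{matrix-equation} entirely. Both routes are sound; yours is marginally more self-contained, while the paper's active-set bookkeeping is the template reused later (e.g., in Propositions~\ref{midpoint-vertex} and~\ref{type1vertices}), which is presumably why it is set up that way.
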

\begin{proof}
For each $\hat x \in F$, we clearly have $(\hat x, \hat X) \in \cF$, where $\hat X = \hat x {\hat x}^T \in \cS^n$. 

First, suppose that $\hat x \in \R^n$ is a vertex of $F$. Let $G^0 \in \R^{n \times m_0}$ and $G^1 \in \R^{n \times m_1}$ denote the submatrices of $G$, where $m_0 + m_1 = m$, and let $g^0 \in \R^{m_0}$ and $g^1 \in \R^{m_1}$ denote the corresponding subvectors of $g$ such that 
\begin{equation} \label{def_G0_G1}
(G^0)^T \hat x = g^0, \quad (G^1)^T \hat x < g^1.
\end{equation} 
First, we identify the set of active constraints of (RLT) at $(\hat x, \hat X)$:
\begin{footnotesize}
\begin{eqnarray*}
(G^0)^T \hat x & = & g^0\\
(G^1)^T \hat x & < & g^1\\
H^T \hat x & = & h \\
H^T \hat x \hat x^T & = & h \hat x^T\\
\begin{bmatrix} (G^0)^T \\ (G^1)^T \end{bmatrix} \hat x \hat x^T  \begin{bmatrix} (G^0)^T \\ (G^1)^T \end{bmatrix}^T - \begin{bmatrix} (G^0)^T \\ (G^1)^T \end{bmatrix} \hat x \begin{bmatrix} g^0 \\ g^1 \end{bmatrix}^T - \begin{bmatrix} g^0 \\ g^1 \end{bmatrix} \hat x^T \begin{bmatrix} (G^0)^T \\ (G^1)^T \end{bmatrix}^T + \begin{bmatrix} g^0 \\ g^1 \end{bmatrix} \begin{bmatrix} g^0 \\ g^1 \end{bmatrix}^T & = & \begin{bmatrix} 0 & 0 \\ 0 & r^1 (r^1)^T \end{bmatrix},
\end{eqnarray*}
\end{footnotesize}
where $r^1 = g^1 - (G^1)^T \hat x > 0$. Therefore, $r^1 (r^1)^T$ is componentwise strictly positive. 

By Lemma~\ref{vertex_chars}~(ii), it suffices to show that the system 
\begin{eqnarray*}
(G^0)^T \hat d & = & 0\\
H^T \hat d & = & 0 \\
H^T \hat D & = & h \hat d^T\\
(G^0)^T \hat D G^0 - (G^0)^T \hat d (g^0)^T - g^0 \hat d^T G^0 & = & 0\\
(G^0)^T \hat D G^1 - (G^0)^T \hat d (g^1)^T - g^0 \hat d^T G^1 & = & 0, 
\end{eqnarray*}
where $(\hat d, \hat D) \in \R^n \times \cS^n$, 
has a unique solution $(\hat d, \hat D) = (0,0)$. 

Since $\hat x$ is a vertex of $F$, the matrix $\begin{bmatrix} G^0 & H \end{bmatrix}$ has full row rank by Lemma~\ref{vertex_chars}~(iii). Therefore, we obtain $\hat d = 0$ from the first two equations. Substituting $\hat d = 0$ into the third and fourth equations, we obtain
\[
\begin{bmatrix} (G^0)^T \\ H^T \end{bmatrix} \hat D \begin{bmatrix} (G^0)^T \\ H^T \end{bmatrix}^T = \begin{bmatrix} 0 & 0 \\ 0 & 0 \end{bmatrix},
\]
where we used $H^T \hat D = 0$ by the third equation. 
By Lemma~\ref{matrix-equation}, we obtain $\hat D = 0$, which implies that $(\hat x, \hat X)$ is a vertex of $\cF$.

Conversely, suppose that $\hat x$ is not a vertex of $F$. By Lemma~\ref{vertex_chars}~(ii), there exists a nonzero $\hat d \in \R^n$ such that each of $\hat d$ and $-\hat d$ is a feasible direction at $\hat x \in F$. Using the same partition as in \eqref{def_G0_G1}, we obtain
\[
(G^0)^T \hat d = 0, \quad H^T \hat d = 0.
\]
Let $\hat D = \hat d \hat x^T + \hat x \hat d^T \in \cS^n$. 
We claim that each of $(\hat d, \hat D)$ and $-(\hat d, \hat D)$ is a feasible direction at $(\hat x, \hat X)$. Indeed, 
\begin{eqnarray*}
H^T (\hat d \hat x^T + \hat x \hat d^T) & = & h \hat d^T\\
(G^0)^T (\hat d \hat x^T + \hat x \hat d^T) G^0 - (G^0)^T \hat d (g^0)^T - g^0 \hat d^T G^0 & = & 0\\
(G^0)^T (\hat d \hat x^T + \hat x \hat d^T) G^1 - (G^0)^T \hat d (g^1)^T - g^0 \hat d^T G^1 & = & 0     
\end{eqnarray*}
Furthermore, since $(G^1)^T \hat x < g^1$ and $(G^1)^T \hat X G^1 - (G^1)^T \hat x (g^1)^T - g^1 \hat x^T G^1 + g^1 (g^1)^T = r^1 (r^1)^T > 0$, where $r^1 = g^1 - (G^1)^T \hat x > 0$, it follows that there exists a real number $\epsilon > 0$ such that $(\hat x, \hat X) + \epsilon (\hat d, \hat D) \in \cF$ and $(\hat x, \hat X) - \epsilon (\hat d, \hat D) \in \cF$, which implies that $(\hat x, \hat X)$ is not a vertex of $\cF$ by Lemma~\ref{vertex_chars}~(ii).
\end{proof}

By Proposition~\ref{vertex1}, for each vertex $\hat x \in F$, there is a corresponding vertex $(\hat x, \hat X) \in \cF$, where $\hat X = \hat x \hat x^T$. We therefore obtain the following result.

\begin{corollary} \label{vertex_existence}
Suppose that $F$ is nonempty. The set of vertices of $F$ is nonempty if and only if the set of vertices of $\cF$ is nonempty.    
\end{corollary}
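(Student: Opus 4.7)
The plan is to prove both directions as direct consequences of results already established in this section, so no substantive new work is required.

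For the forward direction, I would assume that $F$ has a vertex $\hat x \in F$. Then by Proposition~\ref{vertex1}, the point $(\hat x, \hat x \hat x^T) \in \cF$ is a vertex of $\cF$, which exhibits a vertex of $\cF$ and completes this direction.

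For the reverse direction, I would argue by contrapositive: assume that $F$ has no vertices. Since $F$ is nonempty by hypothesis, Lemma~\ref{novertex_imp} applies directly and yields that $\cF$ has no vertices. Combining the two implications gives the claimed equivalence.

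There is no real obstacle here, since the nontrivial work has already been done in establishing Proposition~\ref{vertex1} (for the construction of a vertex of $\cF$ from a vertex of $F$) and in Lemma~\ref{novertex_imp} (for propagating the absence of vertices from $F$ to $\cF$ via a line in $\cF$ built from a line in $F$ using Lemma~\ref{rel-rec-cones}). The only thing worth being careful about is the logical bookkeeping: the easy direction of the corollary goes through Proposition~\ref{vertex1}, while the other direction is the contrapositive of Lemma~\ref{novertex_imp}, and one should note that the hypothesis that $F$ is nonempty is exactly what is needed to invoke Lemma~\ref{novertex_imp}.
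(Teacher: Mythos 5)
Your proposal is correct and matches the paper's proof exactly: the paper also derives the corollary immediately from Proposition~\ref{vertex1} (for the forward direction) and Lemma~\ref{novertex_imp} (for the contrapositive of the reverse direction). Your explicit logical bookkeeping of which result handles which direction is just a spelled-out version of the paper's one-line argument.
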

\begin{proof}
The result immediately follows from Lemma~\ref{novertex_imp} and Proposition~\ref{vertex1}.    
\end{proof}

Next, we identify another connection between the set of vertices of $\cF$ and the set of vertices of $F$.

\begin{proposition} \label{midpoint-vertex}
Let $v^1 \in F$ and $v^2 \in F$ be two vertices such that $v^1 \neq v^2$. Let $\hat x = \textstyle\frac{1}{2}(v^1 + v^2)$ and $\hat X = \textstyle\frac{1}{2}\left(v^1 (v^2)^T + v^2 (v^1)^T\right)$. Then, $(\hat x, \hat X)$ is a vertex of $\cF$.
\end{proposition}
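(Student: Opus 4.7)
The plan is to verify $(\hat x, \hat X) \in \cF$ directly, and then apply Lemma~\ref{vertex_chars}(ii): show that if $(\hat x \pm \hat d, \hat X \pm \hat D) \in \cF$, then $(\hat d, \hat D) = (0,0)$. For membership, let $u^i := g - G^T v^i \geq 0$ for $i=1,2$; a direct expansion yields
\[
G^T \hat X G - G^T \hat x g^T - g \hat x^T G + g g^T = \tfrac{1}{2}\bigl(u^1 (u^2)^T + u^2 (u^1)^T\bigr) \geq 0,
\]
while the equalities and linear inequalities of $\cF$ are inherited from $v^1, v^2 \in F$ by convexity, and $H^T \hat X = h \hat x^T$ follows from $H^T v^i = h$ for $i=1,2$.

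For the vertex property, let $(\hat d, \hat D)$ be such that $(\hat x \pm \hat d, \hat X \pm \hat D) \in \cF$, and set $(x, X) := (\hat x + \hat d, \hat X + \hat D) \in \cF$. Every inequality of $\cF$ active at $(\hat x, \hat X)$ must remain at equality at $(x, X)$. Let $I^i := \{j : (u^i)_j = 0\}$ be the indices active at $v^i$, and denote by $G^{(i)}$ and $g^{(i)}$ the submatrix of $G$ and subvector of $g$ indexed by $I^i$. Inspecting $\tfrac{1}{2}\bigl(u^1 (u^2)^T + u^2 (u^1)^T\bigr)$ componentwise, the $(j,k)$-entry of $M(x,X) := G^T X G - G^T x g^T - g x^T G + g g^T$ must vanish whenever $j, k \in I^1$ or $j, k \in I^2$, so that
\[
(G^{(i)})^T X G^{(i)} = (G^{(i)})^T x (g^{(i)})^T + g^{(i)} x^T G^{(i)} - g^{(i)} (g^{(i)})^T, \quad i = 1, 2.
\]

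The core step is to introduce $Y := X - x x^T \in \cS^n$. From $H^T X = h x^T$ and $H^T x = h$ we obtain $H^T Y = 0$, and hence $Y H = 0$ by symmetry of $Y$. Subtracting $(G^{(i)})^T x x^T G^{(i)} = y^{(i)} (y^{(i)})^T$, with $y^{(i)} := (G^{(i)})^T x$, from both sides of the above equalities gives
\[
(G^{(i)})^T Y G^{(i)} = -\bigl(y^{(i)} - g^{(i)}\bigr)\bigl(y^{(i)} - g^{(i)}\bigr)^T, \quad i = 1, 2.
\]
Setting $M^{(i)} := \begin{bmatrix} G^{(i)} & H \end{bmatrix}$ and $\tilde w^{(i)} := \begin{bmatrix} y^{(i)} - g^{(i)} \\ 0 \end{bmatrix}$, these identities combine with $H^T Y = Y H = 0$ to yield $(M^{(i)})^T Y M^{(i)} = -\tilde w^{(i)} (\tilde w^{(i)})^T$. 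Since $v^i$ is a vertex of $F$, Lemma~\ref{vertex_chars}(iii) gives that $M^{(i)}$ has full row rank, and Lemma~\ref{matrix-equation} then says that $Y$ is the unique symmetric solution of this equation. Because $(M^{(i)})^T (x - v^i) = \tilde w^{(i)}$ (using $(G^{(i)})^T v^i = g^{(i)}$ and $H^T v^i = h$), that unique solution must be $Y = -(x - v^i)(x - v^i)^T$.

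Equating the two expressions gives $(x - v^1)(x - v^1)^T = (x - v^2)(x - v^2)^T$. Both sides are positive semidefinite of rank at most one, so this forces $x - v^1 = \pm (x - v^2)$; the hypothesis $v^1 \neq v^2$ rules out the plus sign, leaving $x = \tfrac{1}{2}(v^1 + v^2) = \hat x$. Then $Y = -\tfrac{1}{4}(v^1 - v^2)(v^1 - v^2)^T$ and $X = Y + \hat x \hat x^T = \hat X$, so $(\hat d, \hat D) = (0,0)$ and $(\hat x, \hat X)$ is a vertex of $\cF$. I anticipate that the main obstacle is identifying the active constraints at the ``non-canonical'' point $(\hat x, \hat X)$, for which $\hat X \neq \hat x \hat x^T$ in general, and recognising that although neither $M^{(1)}$ nor $M^{(2)}$ alone pins down $x$, applying Lemma~\ref{matrix-equation} to both produces a pair of rank-one identities whose only simultaneous solution is $x = \hat x$.
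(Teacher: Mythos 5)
Your proof is correct and follows essentially the same route as the paper's: verify membership via the residuals, observe that the constraints active at $(\hat x,\hat X)$ must stay active under a two-sided perturbation, invoke the full row rank of the two active-constraint matrices $\begin{bmatrix} G^{(i)} & H\end{bmatrix}$ from Lemma~\ref{vertex_chars}~(iii), and apply the uniqueness part of Lemma~\ref{matrix-equation} twice to force a contradiction with $v^1 \neq v^2$. The only (harmless) difference is cosmetic: you work with the shifted matrix $Y = X - xx^T$ at the perturbed point and conclude via equality of two rank-one outer products, whereas the paper works directly with the direction $(\hat d,\hat D)$ and concludes from the linear identity $(v^1-v^2)\hat d^T + \hat d (v^1-v^2)^T = 0$.
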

\begin{proof}
Let $v^1 \in F$ and $v^2 \in F$ be two vertices. Let $\hat x = \textstyle\frac{1}{2}(v^1 + v^2)$ and $\hat X = \textstyle\frac{1}{2}\left(v^1 (v^2)^T + v^2 (v^1)^T\right)$. First, we verify that $(\hat x, \hat X) \in \cF$. Clearly, we have
\begin{eqnarray*}
G^T \hat x = G^T \left(\textstyle\frac{1}{2} (v^1 + v^2) \right) & \leq & g \\
H^T \hat x = H^T \left(\textstyle\frac{1}{2} (v^1 + v^2) \right) & = &h \\
H^T \hat X = H^T \left(\textstyle\frac{1}{2} \left(v^1 (v^2)^T + v^2 (v^1)^T\right) \right) = h \left(\textstyle\frac{1}{2} (v^1 + v^2) \right)^T & = & h \hat x^T.
\end{eqnarray*}
Let us define 
\begin{equation} \label{residuals}
r^{(1)} = g - G^T v^1 \geq 0, \quad r^{(2)} = g - G^T v^2 \geq 0.
\end{equation}
Then, we obtain 
\begin{equation} \label{prod_ineq}
G^T \hat X G - G^T \hat x g^T - g \hat x^T G + g g^T = \textstyle\frac{1}{2} \left(r^{(1)} (r^{(2)})^T + r^{(2)} (r^{(1)})^T \right) \geq 0,    
\end{equation}
where we used \eqref{residuals}. Therefore, $(\hat x, \hat X) \in \cF$. 

Next, we show that $(\hat x, \hat X)$ is a vertex of $\cF$. 
We define the following submatrices of $G$ and the corresponding subvectors of $g$:
\begin{eqnarray*}
(G^0)^T \hat x \, \, = \, \, (G^0)^T v^1 & = & (G^0)^T v^2 = g^0\\
(G^1)^T v^1 = g^1 & , & (G^1)^T v^2 < g^1\\
(G^2)^T v^1 < g^2 & , & (G^2)^T v^2 = g^2\\
(G^3)^T v^1 < g^3 & , & (G^3)^T v^2 < g^3.
\end{eqnarray*}
We remark that $G^1$ and $G^2$ are nonempty submatrices of $G$ since $v^1 \neq v^2$. By \eqref{residuals} and \eqref{prod_ineq}, 
we can identify the set of active constraints of (RLT) at $(\hat x, \hat X)$:
\begin{eqnarray*}
(G^0)^T \hat x & = & g^0\\
(G^j)^T \hat x & < & g^j, \quad j = 1,2,3\\
H^T \hat x & = & h \\
H^T \hat X & = & h \hat x^T\\
G^T \hat X G - G^T \hat x g^T - g \hat x^T G + g g^T & = & \begin{bmatrix} 0 & 0 & 0 & 0 \\
0 & 0 & + & + \\ 0 & + & 0 & + \\ 0 & + & + & + \end{bmatrix},
\end{eqnarray*}
where, in the last equation, we assume without loss of generality that $G = \begin{bmatrix} G^0 & G^1 & G^2 & G^3 \end{bmatrix}$ and $g$ is partitioned accordingly, and $+$ denotes a submatrix with strictly positive entries.

Therefore, by Lemma~\ref{vertex_chars}~(ii), it suffices to show that the system 
\begin{eqnarray*}
(G^0)^T \hat d & = & 0\\
H^T \hat d & = & 0 \\
H^T \hat D & = & h \hat d^T\\
(G^i)^T \hat D G^j - (G^i)^T \hat d (g^j)^T - g^i \hat d^T G^j & = & 0, \quad (i,j) \in \{(0,0),(0,1),(0,2),(0,3),(1,1),(2,2)\},
\end{eqnarray*}
where $(\hat d, \hat D) \in \R^n \times \cS^n$, has a unique solution $(\hat d, \hat D) = (0,0)$. 

Therefore, $(\hat d, \hat D)$ should simultaneously solve the following two systems:
\begin{eqnarray*}
\begin{bmatrix} (G^0)^T \\ (G^1)^T \\ H^T \end{bmatrix} \hat D \begin{bmatrix} (G^0)^T \\ (G^1)^T \\ H^T \end{bmatrix}^T & = &  \begin{bmatrix} 0 & g^0 \hat d^T G^1 & 0 \\ (G^1)^T \hat d (g^0)^T & (G^1)^T \hat d (g^1)^T + g^1 \hat d^T G^1 & (G^1)^T \hat d h^T \\ 0 & h \hat d^T G^1 & 0 \end{bmatrix}, \\
\begin{bmatrix} (G^0)^T \\ (G^2)^T \\ H^T \end{bmatrix} \hat D \begin{bmatrix} (G^0)^T \\ (G^2)^T \\ H^T \end{bmatrix}^T & = &  \begin{bmatrix} 0 & g^0 \hat d^T G^2 & 0 \\ (G^2)^T \hat d (g^0)^T & (G^2)^T \hat d (g^2)^T + g^2 \hat d^T G^2 & (G^2)^T \hat d h^T \\ 0 & h \hat d^T G^2 & 0 \end{bmatrix}.
\end{eqnarray*}
Substituting $(G^0)^T v^1 = g^0$ and $H^T v^1 = h$ into the first equation, and $(G^0)^T v^2 = g^0$ and $H^T v^2 = h$ into the second one, we obtain
\begin{eqnarray*}
\begin{bmatrix} (G^0)^T \\ (G^1)^T \\ H^T \end{bmatrix} \hat D \begin{bmatrix} (G^0)^T \\ (G^1)^T \\ H^T \end{bmatrix}^T & = & 
\begin{bmatrix} (G^0)^T \\ (G^1)^T \\ H^T \end{bmatrix} \left(v^1 \hat d^T + \hat d (v^1)^T\right) \begin{bmatrix} (G^0)^T \\ (G^1)^T \\ H^T \end{bmatrix}^T \\
\begin{bmatrix} (G^0)^T \\ (G^2)^T \\ H^T \end{bmatrix} \hat D \begin{bmatrix} (G^0)^T \\ (G^2)^T \\ H^T \end{bmatrix}^T & = & 
\begin{bmatrix} (G^0)^T \\ (G^2)^T \\ H^T \end{bmatrix} \left(v^2 \hat d^T + \hat d (v^2)^T\right) \begin{bmatrix} (G^0)^T \\ (G^2)^T \\ H^T \end{bmatrix}^T .
\end{eqnarray*}
Since $v^1$ and $v^2$ are vertices of $F$, Lemma~\ref{vertex_chars}~(iii) implies that each of the two matrices $\begin{bmatrix} G^0 & G^1 & H 
\end{bmatrix} $ and $\begin{bmatrix} G^0 & G^2 & H 
\end{bmatrix}$ has full row rank. By Lemma~\ref{matrix-equation}, we obtain
\[
\hat D = v^1 \hat d^T + \hat d (v^1)^T =  v^2 \hat d^T + \hat d (v^2)^T,
\]
which implies that $(v^1 - v^2) \hat d^T + \hat d (v^1 - v^2)^T = 0$. Since $v^1 \neq v^2$, we obtain $\hat d = 0$. Substituting this into the matrix equations above, we obtain $\hat D = 0$ by Lemma~\ref{matrix-equation}, which proves the assertion.
\end{proof}

Propositions~\ref{vertex1} and \ref{midpoint-vertex} identify two sets of vertices of $\cF$ under the assumption that $F$ contains at least one vertex. An interesting question is whether every vertex of $\cF$ belongs to one of these two sets. The following example illustrates that this is not necessarily true.

\begin{example} \label{Example1}
Let $n = 2$ and 
\[
F = \left\{x \in \R^2: 0 \leq x_j \leq 1, \quad j = 1,2\right\}, 
\]
i.e., we have $p = 0$, $m = 4$, $G = \begin{bmatrix} I & - I \end{bmatrix}$, and $g^T = \begin{bmatrix} e^T & 0 \end{bmatrix}$, where $e \in \R^2$ and $I \in \cS^2$ denotes the identity matrix. $F$ has four vertices given by 
\begin{equation} \label{box_vertices}
v^1 = \begin{bmatrix} 0 \\ 0 \end{bmatrix}, \quad v^2 = \begin{bmatrix} 0 \\ 1 \end{bmatrix}, \quad v^3 = \begin{bmatrix} 1 \\ 0 \end{bmatrix}, \quad v^4 = \begin{bmatrix} 1 \\ 1 \end{bmatrix}.
\end{equation}
The feasible region of the RLT relaxation is given by
\[
\cF = \left\{(x,X) \in \R^2 \times \cS^2: \begin{array}{rcl} x & \leq & e \\ x & \geq & 0 \\ \begin{bmatrix} X - xe^T - ex^T  + ee^T & e x^T - X  \\ x e^T - X & X \end{bmatrix}
& \geq & 0 \end{array} \right\}. 
\]
By Proposition~\ref{vertex1}, there are four vertices of $\cF$ in the form of $(v^j,v^j (v^j)^T),~j = 1,\ldots,4$. Similarly, by Proposition~\ref{midpoint-vertex}, $\cF$ has another set of six vertices in the form of
\[
\left(\textstyle\frac{1}{2}(v^i + v^j), \textstyle\frac{1}{2} (v^i (v^j)^T + v^j (v^i)^T)\right),~1 \leq i < j \leq 4.
\]
We now claim that $\cF$ has at least one other vertex that does not belong to these two sets. Consider $(\hat x, \hat X) \in \R^2 \times \cS^2$ given by
\begin{equation} \label{different_vertex}
\hat x = \begin{bmatrix} \textstyle\frac{1}{2} \\ \textstyle\frac{1}{2} \end{bmatrix}, \quad \hat X = \begin{bmatrix} \textstyle\frac{1}{2} & 0 \\ 0 & 0 \end{bmatrix}.
\end{equation}
It is easy to verify that $(\hat x, \hat X) \in \cF$ and that $(\hat x, \hat X)$ does not belong to either of the two sets of vertices identified by Proposition~\ref{vertex1} and Proposition~\ref{midpoint-vertex}. It is an easy exercise to show that $(\hat x, \hat X) \pm (\hat d, \hat D) \in \cF$ if and only if $(\hat d, \hat D) = (0,0)$. Therefore, we conclude that $(\hat x, \hat X)$ is a vertex of $\cF$ by Lemma~\ref{vertex_chars}~(ii). 
\end{example}

By Example~\ref{Example1}, the two sets of vertices identified in Propositions~\ref{vertex1} and \ref{midpoint-vertex} do not necessarily encompass all vertices of $\cF$ in general even if $F$ is a polytope. In the next section, we identify a subclass of quadratic programs for which all vertices of $\cF$ are completely characterized by Propositions~\ref{vertex1} and \ref{midpoint-vertex}. We then discuss the implications of this observation on general quadratic programs.

\subsubsection{A Specific Class of Quadratic Programs} \label{specific_QP_class}

In this section, we present a specific class of quadratic programs with the property that all vertices of the feasible region $\cF$ of the RLT relaxation are precisely given by the union of the two sets identified in Propositions~\ref{vertex1} and \ref{midpoint-vertex}. 

Consider the class of instances of (QP), where $Q \in \cS^n$, $c \in \R^n$, and 
\begin{equation} \label{specific_QP}
H = a \in \R^n_+ \backslash \{0\}, \quad h = 1, \quad G = - I \in \cS^n, \quad g = 0 \in \R^n. 
\end{equation}
Therefore, the feasible region is given by
\begin{equation} \label{specific_F}
F = \left\{x \in \R^n: a^T x = 1, \quad x \geq 0\right\}.
\end{equation}

Let us define the following index sets:
\begin{eqnarray}
 \mathbf{P} & = & \left\{j \in \{1,\ldots,n\}: a_j > 0\right\}, \label{def_ind_P}\\
 \mathbf{Z} & = & \left\{j \in \{1,\ldots,n\}: a_j = 0\right\}. \label{def_ind_Z}
\end{eqnarray}

It is straightforward to verify that the set of vertices of $F$ is
\begin{equation} \label{def_specific_V}
V = \left\{\left(\textstyle\frac{1}{a_j}\right) e^j: j \in  \mathbf{P}\right\}.    
\end{equation}

The feasible region $\cF$ of the corresponding RLT relaxation is given by 
\begin{equation} \label{def_specific_cF}
\cF = \left\{(x,X) \in \R^n \times \cS^n: X a = x, \quad a^T x = 1, \quad x \geq 0, \quad X \geq 0 \right\}.  
\end{equation}

We next present our main result in this section.

\begin{proposition} \label{complete_vertices}
Suppose that $\cF$ is given by \eqref{def_specific_cF}, where $a \in \R^n_+ \backslash \{0\}$. Then, $(\hat x, \hat X)$ is a vertex of $\cF$ if and only if $(\hat x, \hat X) = (v,vv^T)$ for some $v \in V$, where $V$ is given by \eqref{def_specific_V}, or $(\hat x, \hat X) = \left( \textstyle\frac{1}{2}(v^1 + v^2), \textstyle\frac{1}{2}(v^1 (v^2)^T + v^2 (v^1)^T)\right)$ for some $v^1 \in V$, $v^2 \in V$, and $v^1 \neq v^2$.
\end{proposition}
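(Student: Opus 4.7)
The ``if'' direction is immediate from Propositions~\ref{vertex1} and~\ref{midpoint-vertex}. For the ``only if'' direction, my plan is to take an arbitrary vertex $(\hat x, \hat X) \in \cF$ and apply Lemma~\ref{vertex_chars}(ii): I must show that the only way no nonzero feasible direction $(\hat d, \hat D)$ exists is that $(\hat x, \hat X)$ lies in one of the two claimed families. Throughout, a direction $(\hat d, \hat D) \in \R^n \times \cS^n$ is feasible at $(\hat x, \hat X)$ exactly when $a^T \hat d = 0$, $\hat D a = \hat d$, $\hat d_i = 0$ for all $i$ with $\hat x_i = 0$, and $\hat D_{ij} = 0$ whenever $\hat X_{ij} = 0$.

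Let $I = \{i : \hat x_i > 0\}$. The first reduction is to show $I \subseteq \mathbf{P}$. If some $k_0 \in I \cap \mathbf{Z}$, then $\hat x_{k_0} = \sum_{k \in \mathbf{P}} \hat X_{k_0, k} a_k > 0$ together with $\hat X \geq 0$ and $a_k > 0$ on $\mathbf{P}$ forces $\hat X_{k_0, p} > 0$ for some $p \in \mathbf{P}$, whereupon
\[
\hat d = e^{k_0}, \quad \hat D = \tfrac{1}{a_p}\bigl(e^{k_0}(e^p)^T + e^p (e^{k_0})^T\bigr)
\]
is a nonzero feasible direction, contradicting Lemma~\ref{vertex_chars}(ii). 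Next, $\hat x_i = 0$ for $i \in I^c$ combined with $\hat X a = \hat x$, $\hat X \geq 0$, and $a \geq 0$ forces $\hat X_{ij} = 0$ whenever $i \in I^c$ and $j \in \mathbf{P}$ (and symmetrically). An analogous direction $(0, e^{i_0}(e^{j_0})^T + e^{j_0}(e^{i_0})^T)$ rules out $\hat X_{i_0 j_0} > 0$ for any $i_0, j_0 \in \mathbf{Z}$. Combining these, $\hat X$ is supported on $I \times I \subseteq \mathbf{P} \times \mathbf{P}$.

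Let $S = \{(i, j) : \hat X_{ij} > 0\} \subseteq I \times I$ and $S_+ = \{(i, j) \in S : i \leq j\}$. When $\hat D$ is chosen symmetric and supported on $S$, the choice $\hat d = \hat D a$ automatically satisfies $\hat d_i = 0$ for $i \notin I$, and the remaining requirement $a^T \hat d = 0$ collapses to the single linear condition $a^T \hat D a = 0$. Since $a_i a_j > 0$ on $I \times I$, this condition is nontrivial as soon as $S \neq \emptyset$, so the feasible-direction space has dimension $|S_+| - 1$ when $S \neq \emptyset$ and dimension $0$ when $S = \emptyset$. Vertex-ness therefore forces $|S_+| \leq 1$, and $|S_+| = 1$ because $a^T \hat x = 1$ rules out $\hat x = 0$ and hence $S = \emptyset$. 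A short case split on whether $S_+ = \{(i,i)\}$ or $S_+ = \{(i,j)\}$ with $i \neq j$, using $\hat X a = \hat x$ and $a^T \hat x = 1$ to pin down the unique nonzero entry of $\hat X$, recovers $(v, v v^T)$ with $v = (1/a_i)e^i \in V$ in the first case and $\bigl(\tfrac{1}{2}(v^1 + v^2), \tfrac{1}{2}(v^1(v^2)^T + v^2(v^1)^T)\bigr)$ with $v^1 = (1/a_i)e^i$, $v^2 = (1/a_j)e^j \in V$ in the second.

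The step I expect to require the most care is the first reduction: one needs to verify that the auxiliary entry $\hat X_{k_0, p}$ used in constructing the forbidden direction really is strictly positive, and more generally that each proposed $(\hat d, \hat D)$ respects all componentwise nonnegativity constraints simultaneously. Once the supports of $\hat x$ and $\hat X$ are pinned down to $\mathbf{P}$ and $I \times I$ respectively, the remainder reduces to the clean linear-algebraic dimension count above.
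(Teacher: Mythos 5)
Your proposal is correct. The ``if'' direction and the first half of the ``only if'' direction coincide with the paper's proof: the paper also begins by showing $\hat x_{\mathbf{Z}} = 0$, $\hat X_{\mathbf{P}\mathbf{Z}} = 0$, $\hat X_{\mathbf{Z}\mathbf{P}} = 0$, $\hat X_{\mathbf{Z}\mathbf{Z}} = 0$ via perturbation directions and Lemma~\ref{vertex_chars}~(ii) (though it leaves the explicit directions to the reader, whereas you write them out and correctly verify, e.g., that $\hat X_{k_0 p}>0$ for some $p\in\mathbf{P}$, which is exactly the point needing care). Where you diverge is the endgame. The paper writes $\hat X = \sum_{k} \mu_k W^k + \sum_{i \neq j} \lambda_{ij} Z^{ij}$ with $\mu_k, \lambda_{ij} \geq 0$, uses $a^T \hat X a = 1$ to show the weights sum to one, and concludes that the vertex, being an extreme point expressed as a convex combination of the known feasible points $(w^k,W^k)$ and $(z^{ij},Z^{ij})$, must equal one of them. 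You instead compute the feasible-direction space directly: every admissible $\hat D$ is supported on $S = \mathrm{supp}(\hat X) \subseteq I\times I$, forces $\hat d = \hat D a$, and is constrained only by $a^T \hat D a = 0$, a functional with strictly positive coefficients on $S$; hence the direction space has dimension $|S_+|-1$ and vertexness forces $|S_+| = 1$. Both arguments are sound and elementary; the paper's is slightly slicker because it reuses Propositions~\ref{vertex1} and \ref{midpoint-vertex}, while yours is more self-contained (it does not presuppose that the candidate points are vertices or even feasible) and makes the one-degree-of-freedom obstruction explicit. No gaps.
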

\begin{proof}
By Propositions~\ref{vertex1} and \ref{midpoint-vertex}, it suffices to prove the forward implication. 

Let us first define
\begin{eqnarray} 
w^k & = & \left(\textstyle\frac{1}{a_k}\right) e^k, \quad k \in \mathbf{P}, \label{tempdef1} \\
W^k & = & w^k (w^k)^T = \left(\textstyle\frac{1}{a_k^2}\right) e^k (e^k)^T, \quad k \in \mathbf{P}, \label{tempdef2} \\
z^{ij} & = & \textstyle\frac{1}{2} \left(w^i + w^j \right) =  \left(\textstyle\frac{1}{2 a_i}\right) e^i + \left(\textstyle\frac{1}{2 a_j}\right) e^j, \quad i \in \mathbf{P}, j \in \mathbf{P}, i \neq j, \label{tempdef3} \\ 
Z^{ij} & = & \textstyle\frac{1}{2} \left(w^i (w^j)^T + w^j (w^i)^T\right) = \left(\textstyle\frac{1}{2 a_i a_j}\right) \left(e^i (e^j)^T + e^j (e^i)^T\right). \quad i \in \mathbf{P}, j \in \mathbf{P}, i \neq j, \label{tempdef4} 
\end{eqnarray}
where $\mathbf{P}$ is given by \eqref{def_ind_P}.
By \eqref{def_specific_V} and Propositions~\ref{vertex1} and \ref{midpoint-vertex}, it follows that each of $(w^k,W^k),~k \in \mathbf{P}$, and $(z^{ij},Z^{ij}),~i \in \mathbf{P}, ~j \in \mathbf{P}, ~i \neq j$, is a vertex of $\cF$. 

Let $(\hat x, \hat X)$ be a vertex of $\cF$. By \eqref{def_specific_cF} and \eqref{def_ind_P}, we obtain
\begin{equation} \label{feas_imps}
\hat X_{\mathbf{P}\mathbf{P}} \, a_{\mathbf{P}} = \hat x_{\mathbf{P}}, \quad a_{\mathbf{P}}^T \hat x_{\mathbf{P}} = 1, \quad \hat x \geq 0, \quad \hat X \geq 0.
\end{equation}
First, we claim that $\hat x_{\mathbf{Z}} = 0$, $\hat X_{\mathbf{P}\mathbf{Z}} = 0$, $\hat X_{\mathbf{Z}\mathbf{P}} = 0$, and $\hat X_{\mathbf{Z}\mathbf{Z}} = 0$. Indeed, by \eqref{feas_imps}, if any of these conditions is not satisfied, it is easy to construct a nonzero $(\hat d, \hat D) \in \R^n \times \cS^n$ such that $(\hat x, \hat X) \pm (\hat d, \hat D) \in \cF$, which would contradict that $(\hat x, \hat X)$ is a vertex of $\cF$ by Lemma~\ref{vertex_chars}~(ii). Therefore, 
\begin{eqnarray*}
\hat X & = & \sum\limits_{k \in \mathbf{P}} \hat X_{kk} e^k (e^k)^T + \textstyle\frac{1}{2} \sum\limits_{i \in \mathbf{P}} \sum\limits_{j \in \mathbf{P}: j \neq i} \hat X_{ij} \left(e^i (e^j)^T + e^j (e^i)^T\right) \\
 & = & \sum\limits_{k \in \mathbf{P}} \mu_k W^k +  \sum\limits_{i \in \mathbf{P}} \sum\limits_{j \in \mathbf{P}: j \neq i} \lambda_{ij} Z^{ij},
\end{eqnarray*}
where $\mu_k = \hat X_{kk} \, a_k^2  \geq 0,~k \in \mathbf{P}$; $\lambda_{ij} =  \hat X_{ij} \, a_i a_j \geq 0,~i \in \mathbf{P},~j \in \mathbf{P},~i \neq j$; $W^k$ and $Z^{ij}$ are defined as in \eqref{tempdef2} and \eqref{tempdef4}, respectively. By using $W^k a = w^k,~k \in \mathbf{P}$; $Z^{ij} a = z^{ij},~i \in \mathbf{P},~j \in \mathbf{P},~i \neq j$; and \eqref{def_specific_cF}, the previous equality implies that 
\[
\hat x = \hat X a = \sum\limits_{k \in \mathbf{P}} \mu_k w^k +  \sum\limits_{i \in \mathbf{P}} \sum\limits_{j \in \mathbf{P}: j \neq i} \lambda_{ij} z^{ij}. 
\]

By \eqref{feas_imps}, we obtain
\[
a_{\mathbf{P}}^T \hat X_{\mathbf{P}\mathbf{P}} \, a_{\mathbf{P}} = \sum\limits_{k \in \mathbf{P}} \hat X_{kk} \, a_k^2 +  \sum\limits_{i \in \mathbf{P}} \sum\limits_{j \in \mathbf{P}: j \neq i} \hat X_{ij} \, a_i a_j = \sum\limits_{k \in \mathbf{P}} \mu_k +  \sum\limits_{i \in \mathbf{P}} \sum\limits_{j \in \mathbf{P}: j \neq i} \lambda_{ij} = 1.
\]

Therefore, $(\hat x, \hat X)$ is given by a convex combination of $(w^k,W^k),~k \in \mathbf{P}$, and $(z^{ij},Z^{ij}),~i \in \mathbf{P}, ~j \in \mathbf{P}, ~i \neq j$. By Propositions~\ref{vertex1} and \ref{midpoint-vertex}, we conclude that either $(\hat x, \hat X) = (w^k,W^k)$ for some $k \in \mathbf{P}$, or $(\hat x, \hat X) = (z^{ij},Z^{ij})$ for some $i \in \mathbf{P}, ~j \in \mathbf{P}, ~i \neq j$. This completes the proof.
\end{proof}

Our next result gives a closed-form expression of the lower bound $\ell_R$ for an instance of (QP) in this specific class.

\begin{corollary} \label{RLT_specific_closed_form}
Consider an instance of (QP), where $F$ is given by \eqref{specific_F} and $a \in \R^n_+ \backslash \{0\}$. If $\ell^*_R$ is finite, then
\begin{equation} \label{upper_bound_lR}
\ell^*_R = \min\left\{\min\limits_{v \in V}\textstyle\left\{\frac{1}{2} v^T Q v + c^T v\right\}, \min\limits_{v^1 \in V,~v^2 \in V,~v^1 \neq v^2}
\frac{1}{2}\left((v^1)^T Q v^2 + c^T (v^1 + v^2)\right)\right\}, 
\end{equation}
where $V$ and $\mathbf{P}$ are given by \eqref{def_specific_V} and \eqref{def_ind_P}, respectively. 
\end{corollary}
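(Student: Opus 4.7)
The plan is to reduce the evaluation of $\ell^*_R$ to a minimization over the finite vertex set of $\cF$, which is fully described by Proposition~\ref{complete_vertices}, and then to evaluate the linear objective at the two types of vertices.

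First, I would observe that (RLT) is a linear program over the polyhedron $\cF$ given by \eqref{def_specific_cF}. Since $\ell^*_R$ is assumed to be finite, $\cF$ is nonempty and the linear program is bounded below. The next step is to guarantee that the optimum is attained at a vertex of $\cF$. To see that $\cF$ has vertices, note that for the class \eqref{specific_QP} we have $[G\,\,H]=[-I\,\,a]$, so the rank $\rho$ defined in \eqref{rank-const-mat} equals $n$. By Lemma~\ref{rec_cone_results}, $F$ has at least one vertex (in fact the set $V$ in \eqref{def_specific_V} is nonempty since $a\neq 0$), and hence by Corollary~\ref{vertex_existence}, $\cF$ has vertices as well. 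By Lemma~\ref{decomp_result}, $\cF$ decomposes as the convex hull of its (finitely many) vertices plus $\cF_\infty$, and since the linear objective is bounded below on $\cF$, it must be nonnegative on $\cF_\infty$, so the optimum is attained at some vertex of $\cF$.

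Next, I would invoke Proposition~\ref{complete_vertices} to enumerate the vertices of $\cF$: each vertex is either of the form $(v,vv^T)$ for some $v\in V$, or of the form $\bigl(\tfrac{1}{2}(v^1+v^2),\tfrac{1}{2}(v^1(v^2)^T+v^2(v^1)^T)\bigr)$ for distinct $v^1,v^2\in V$. The remaining task is then a direct computation. Evaluating the objective $\tfrac{1}{2}\langle Q,X\rangle+c^Tx$ at a vertex of the first type yields
\[
\tfrac{1}{2}\langle Q,vv^T\rangle+c^Tv=\tfrac{1}{2}v^TQv+c^Tv,
\]
while at a vertex of the second type, using the symmetry of $Q$ so that $\langle Q,v^1(v^2)^T\rangle=\langle Q,v^2(v^1)^T\rangle=(v^1)^TQv^2$, one obtains
\[
\tfrac{1}{2}\Bigl\langle Q,\tfrac{1}{2}(v^1(v^2)^T+v^2(v^1)^T)\Bigr\rangle+\tfrac{1}{2}c^T(v^1+v^2)=\tfrac{1}{2}(v^1)^TQv^2+\tfrac{1}{2}c^T(v^1+v^2).
\]
Taking the minimum over the two vertex types then gives exactly the right-hand side of \eqref{upper_bound_lR}.

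The only nontrivial step is the justification that the finite LP optimum is attained at a vertex of $\cF$; once this is established, the rest is a substitution exercise relying on Proposition~\ref{complete_vertices} and the symmetry of $Q$. No separate argument is needed for the minimum being well-defined since $V$ is finite by \eqref{def_specific_V}.
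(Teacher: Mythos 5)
Your proposal is correct and follows essentially the same route as the paper: the paper's proof simply asserts that the finite LP optimum is attained at a vertex and invokes Proposition~\ref{complete_vertices}, while you additionally spell out why $\cF$ has vertices and why attainment at a vertex holds (via Lemma~\ref{rec_cone_results}, Corollary~\ref{vertex_existence}, and Lemma~\ref{decomp_result}), plus the routine evaluation of the objective at the two vertex types. No gaps; the extra justification is a faithful expansion of the step the paper leaves implicit.
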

\begin{proof}
If $\ell_R$ is finite, the relation \eqref{upper_bound_lR} follows from Proposition~\ref{complete_vertices} since (RLT) is a linear programming problem and the optimal value is attained at a vertex.
\end{proof}

We close this section with a discussion of a well-studied class of quadratic programs that belong to the specific class of quadratic programs identified in this section. An instance of (QP) is referred to as a \emph{standard quadratic program} (see, e.g.,~\cite{Bomze98}) if 
\begin{equation} \label{StQP}
H = e \in \R^n, \quad h = 1, \quad G = - I \in \cS^n, \quad g = 0 \in \R^n. 
\end{equation}
Therefore, the feasible region of a standard quadratic program is the unit simplex given by 
\begin{equation} \label{StQP_F}
F = \left\{x \in \R^n: e^T x = 1, \quad x \geq 0\right\}.
\end{equation}
Similarly, the feasible region of the RLT relaxation of a standard quadratic program is given by 
\begin{equation} \label{StQP_cF}
\cF = \left\{(x,X) \in \R^n \times \cS^n: X e = x, \quad e^T x = 1, \quad x \geq 0, \quad X \geq 0 \right\}.  
\end{equation}

Proposition~\ref{complete_vertices} gives rise to the following result on standard quadratic programs.

\begin{corollary} \label{StQP_vertices}
Consider an instance of a standard quadratic program and let 
$\cF$ denote the feasible region of the RLT relaxation given by \eqref{StQP_cF}. Then, $(\hat x, \hat X)$ is a vertex of $\cF$ if and only if $(\hat x, \hat X) = (e^j,e^j (e^j)^T)$ for some $j = 1,\ldots,n$, or $(\hat x, \hat X) = \left( \textstyle\frac{1}{2}(e^i + e^j), \textstyle\frac{1}{2}\left(e^i (e^j)^T + e^j (e^i)^T\right)\right)$ for some $1 \leq i < j \leq n$. Furthermore, 
\[
\ell^*_R = \min\left\{\min\limits_{k = 1,\ldots,n}\left\{\textstyle\frac{1}{2} Q_{kk} + c_k \right\}, \min\limits_{i=1,\ldots,n;~j = 1,\ldots,n;~i \neq j}
\textstyle\frac{1}{2}\left(Q_{ij} + c_i + c_j \right)\right\}.
\]
\end{corollary}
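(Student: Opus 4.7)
The plan is to recognize that a standard quadratic program, as specified by \eqref{StQP}, is exactly the instance of the class treated in Section~\ref{specific_QP_class} obtained by choosing $a = e \in \R^n_+\backslash\{0\}$. The entire statement will then follow by direct specialization of Proposition~\ref{complete_vertices} and Corollary~\ref{RLT_specific_closed_form}, so no new machinery needs to be developed.

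First, I would identify the relevant index sets. Since $a = e$, every component $a_j$ equals $1$, so $\mathbf{P} = \{1,\ldots,n\}$ and $\mathbf{Z} = \emptyset$ in the notation of \eqref{def_ind_P} and \eqref{def_ind_Z}. Consequently, the vertex set $V$ of $F$ from \eqref{def_specific_V} collapses to $V = \{e^1,\ldots,e^n\}$, because $(1/a_j)e^j = e^j$. The feasible region $\cF$ in \eqref{StQP_cF} is precisely \eqref{def_specific_cF} for this choice of $a$, so Proposition~\ref{complete_vertices} applies verbatim.

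Next, I would read off the two families of vertices. The ``diagonal'' family $(v,vv^T)$ with $v \in V$ yields $(e^j,e^j(e^j)^T)$ for $j = 1,\ldots,n$. The ``midpoint'' family from Proposition~\ref{midpoint-vertex} with $v^1 = e^i$ and $v^2 = e^j$ for $i \neq j$ gives $\bigl(\tfrac{1}{2}(e^i + e^j), \tfrac{1}{2}(e^i(e^j)^T + e^j(e^i)^T)\bigr)$. Since the pair $(v^1,v^2)$ and the pair $(v^2,v^1)$ produce the identical element of $\cF$, the distinct-vertex requirement $v^1 \neq v^2$ can be replaced by the symmetric restriction $1 \leq i < j \leq n$, giving exactly the vertex description in the corollary.

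For the closed-form expression of $\ell^*_R$, I would first verify finiteness so that Corollary~\ref{RLT_specific_closed_form} is applicable: $F$ in \eqref{StQP_F} is the nonempty unit simplex, hence bounded, so by Lemma~\ref{bounded_imp} the polyhedron $\cF$ is nonempty and bounded, and the linear program (RLT) attains a finite optimum at a vertex. The formula \eqref{upper_bound_lR} then specializes by direct evaluation of the objective $\tfrac{1}{2}\langle Q,X\rangle + c^T x$: at $(e^j,e^j(e^j)^T)$ one obtains $\tfrac{1}{2}Q_{jj} + c_j$, and at the midpoint vertex one obtains $\tfrac{1}{2}(Q_{ij} + c_i + c_j)$, where the symmetry of $Q$ is used to combine the two off-diagonal terms. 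Taking the minimum over both collections produces the claimed expression. There is no serious obstacle here; the only thing to watch is the bookkeeping that turns the unordered-pair indexing used in the proposition into the ordered indexing $i \neq j$ used in the stated formula for $\ell^*_R$, which is immediate from the symmetry of the midpoint construction and of $Q$.
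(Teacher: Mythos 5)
Your proposal is correct and follows exactly the paper's route: the vertex characterization is obtained by specializing Proposition~\ref{complete_vertices} with $a = e$ (so $V = \{e^1,\ldots,e^n\}$), and the formula for $\ell^*_R$ follows from Lemma~\ref{bounded_imp} and Corollary~\ref{RLT_specific_closed_form} since the simplex is bounded. Your additional bookkeeping on the ordered versus unordered pair indexing is a harmless elaboration of the same argument.
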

\begin{proof}
The first assertion follows from Proposition~\ref{complete_vertices} and \eqref{def_specific_V} by using $a = e \in \R^n_+ \backslash \{0\}$, and the second one from Lemma~\ref{bounded_imp} and Corollary~\ref{RLT_specific_closed_form} since $F$ is bounded.
\end{proof}

In~\cite{SagolY15}, using an alternative copositive formulation of standard quadratic programs in~\cite{BomzeDKRQT00}, 
a hierarchy of linear programming relaxations arising from the sequence of polyhedral approximations of the copositive cone proposed by~\cite{KlerkP02} was considered and the same lower bound given by Corollary~\ref{StQP_vertices} was established for the first level of the hierarchy. Therefore, it is worth noting that the relaxation arising from the copositive formulation turns out to be equivalent to the RLT relaxation arising from the usual formulation of standard quadratic programs as an instance of (QP). 

\subsubsection{Implications on General Quadratic Programs}

In Section~\ref{specific_QP_class}, we identified a specific class of quadratic programs with the property that Propositions~\ref{vertex1} and \ref{midpoint-vertex} completely characterize the set of all vertices of the feasible region of the corresponding RLT relaxation. In this section, we first observe that every quadratic program can be equivalently formulated as instance of (QP) in this class. We then discuss the implications of this observation on RLT relaxations of general quadratic programs.

Consider a general quadratic program, where $F$ given by \eqref{def_F} is nonempty. By Lemma~\ref{decomp_result} and \eqref{def_Dinf_alt}, 
\begin{equation} \label{decomp_polyhedra_2}
F = \textrm{conv}\left(\left\{v^1,\ldots,v^s\right\}\right) + \textrm{cone}\left(\left\{d^1,\ldots,d^t\right\}\right),    
\end{equation}
where $v^i \in F_i,~i = 1,\ldots,s$, and each $F_i \subseteq F,~i = 1,\ldots,s$, denotes a minimal face of $F$, and $d^1,\ldots,d^t$ are the generators of $F_\infty$. Let us define 
\begin{equation} \label{matrix_defs}
M = \begin{bmatrix}  v^1 & \cdots & v^s\end{bmatrix} \in \R^{n \times s}, \quad P = \begin{bmatrix}  d^1 & \cdots & d^{t} \end{bmatrix} \in \R^{n \times t}.
\end{equation}
By \eqref{decomp_polyhedra_2} and \eqref{matrix_defs}, $\hat x \in F$ if and only if there exists $y \in \R^s_+$ and $z \in \R^{t}_+$ such that $e^T y = 1$ and $\hat x = My + Pz$. Therefore, (QP) admits the following alternative formulation:
\[
\textrm{(QPA)} \quad \min\limits_{y \in \R^s, z \in \R^{t}} \left\{ \textstyle\frac{1}{2}\left(\left(My + Pz\right)^T Q \left(My + Pz\right)\right) + c^T (My + Pz): e^T y = 1, \quad y \geq 0, \quad z \geq 0\right\}.
\]
We conclude that every quadratic program admits an equivalent reformulation as an instance in the specific class identified in Section~\ref{specific_QP_class}. However, we remark that this equivalence is mainly of theoretical interest since such a reformulation requires the enumeration of all minimal faces of $F$ and all generators of $F_\infty$, each of which may have an exponential size. 

Nevertheless, in this section, we will discuss the relations between the RLT relaxation of (QP) and that of the alternative formulation (QPA) and draw some conclusions. 

Let us introduce the following notations:
\begin{eqnarray}
    n_A & = & s + t \label{tildedef0} \\
    Q_A & = & \begin{bmatrix} M^T Q M & M^T Q P \\ P^T Q M & P^T Q P \end{bmatrix} \in \cS^{n_A} \label{tildedef1} \\
    c_A & = & \begin{bmatrix} M^T c \\ P^T c \end{bmatrix} \label{tildedef2} \in \R^{n_A} \\
    a_A & = & \begin{bmatrix} e \\ 0 \end{bmatrix} \in \R^{n_A}\label{tildedef3} \\
    x_A & = & \begin{bmatrix} y \\ z \end{bmatrix} \in \R^{n_A}\label{tildedef6} 
\end{eqnarray}

Therefore, (QPA) can be expressed by
\[
\textrm{(QPA)} \quad \min\limits_{x_A \in \R^{{n_A}}} \left\{ \textstyle\frac{1}{2} (x_A)^T Q_A x_A + (c_A)^T x_A: x_A \in F_A\right\},
\]
where 
\begin{equation} \label{def_FA}
F_A = \left\{x_A \in \R^{n_A}: (a_A)^T x_A = 1, \quad x_A \geq 0 \right\}.
\end{equation}

Similarly, the RLT relaxation of (QPA) is given by
\[
\textrm{(RLTA)} \quad \ell^*_{RA} = \min\limits_{x_A \in \R^{n_A}, X_A \in \cS^{n_A}} \left\{\displaystyle\frac{1}{2}\langle Q_A, X_A \rangle + (c_A)^T x_A : (x_A,X_A) \in \cF_A\right\},
\]
where 
\begin{equation} \label{def_cFA}
\cF_A = \left\{(x_A,X_A) \in \R^{n_A} \times \cS^{n_A}: X_A \, a_A = x_A, \quad (a_A)^T x_A = 1, \quad x_A \geq 0, \quad X_A \geq 0 \right\}.
\end{equation}

We now present the first relation between the RLT relaxations of (QP) and (QPA) given by (RLT) and (RLTA), respectively.

\begin{proposition} \label{RLT_comp1}
Consider a general quadratic program, where $F$ given by \eqref{def_F} is nonempty. Then, $\ell^*_R \leq \ell^*_{RA} \leq \ell^*$.    
\end{proposition}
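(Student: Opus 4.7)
The plan is to prove the two inequalities separately, with the right one being essentially immediate and the left one requiring the construction of a feasibility-preserving and objective-preserving lifting map between the two RLT feasible regions.

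For the right inequality $\ell^*_{RA} \leq \ell^*$, I would first observe that (QP) and (QPA) are equivalent: the decomposition \eqref{decomp_polyhedra_2} together with $x = My + Pz$ provides a bijection between feasible points of (QP) and feasible points of (QPA) that preserves the objective value by construction of $Q_A$ and $c_A$ in \eqref{tildedef1}--\eqref{tildedef2}. Hence their optimal values coincide, and since (RLTA) is a linear relaxation of (QPA) (via $(x_A, x_A x_A^T) \in \cF_A$), we get $\ell^*_{RA} \leq \ell^* = \ell^*$.

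For the left inequality $\ell^*_R \leq \ell^*_{RA}$, the strategy is to exhibit, for every $(x_A, X_A) \in \cF_A$, a lifted pair $(x, X) \in \cF$ with the same objective value. The natural candidate is the map
\[
x = [M\ P]\, x_A, \qquad X = [M\ P]\, X_A\, [M\ P]^T,
\]
where $x_A^T = [y^T\ z^T]$, so $x = My + Pz$. Preservation of the objective then follows from \eqref{tildedef1}--\eqref{tildedef2} by the cyclic property of the trace inner product. The linear constraints $G^T x \leq g$ and $H^T x = h$ follow directly from the facts that $v^i \in F$ and $d^j \in F_\infty$, combined with $e^T y = 1$, $y \geq 0$, $z \geq 0$. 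For the equality $H^T X = h x^T$, the key identity is $H^T [M\ P] = h\, a_A^T$ (since $H^T v^i = h$ and $H^T d^j = 0$), which, combined with $X_A a_A = x_A$, collapses $H^T X$ exactly to $h x^T$.

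The main obstacle, and the genuine calculation, is verifying the RLT product inequality $G^T X G - G^T x g^T - g x^T G + g g^T \geq 0$ componentwise. The idea is to introduce the nonnegative residual matrix $R \in \R^{m \times n_A}$ whose columns are $r^i = g - G^T v^i \geq 0$ (for $i = 1, \ldots, s$) and $s^j = -G^T d^j \geq 0$ (for $j = 1, \ldots, t$), so that $G^T [M\ P] = g\, a_A^T - R$. Substituting this into the lifted expression and using $X_A a_A = x_A$ together with $a_A^T x_A = 1$, all terms involving $g$ cancel telescopically, leaving the RLT slack equal to $R X_A R^T$. Since $R \geq 0$ and $X_A \geq 0$ componentwise, this product is componentwise nonnegative, which completes the feasibility verification and yields $\ell^*_R \leq \ell^*_{RA}$.
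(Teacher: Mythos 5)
Your proof is correct and follows essentially the same route as the paper: the same lifting $(x,X) = ([M\ P]x_A,\ [M\ P]X_A[M\ P]^T)$, the same identities $H^T[M\ P] = h\,a_A^T$ and $G^T[M\ P] - g\,a_A^T \leq 0$, and the same observation that the RLT slack equals a product of componentwise nonnegative matrices (your telescoping with the residual matrix is just the expanded form of the paper's quadratic expression). The only nitpick is that the correspondence $x = My+Pz$ is a surjection onto $F$ rather than a bijection, but surjectivity plus objective preservation is all that is needed for $\ell^*_{RA} \leq \ell^*$, so the argument stands.
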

\begin{proof}
Let $(\hat x_A, \hat X_A) \in \cF_A$ be an arbitrary feasible solution of (RLTA). 
 We will construct a corresponding feasible solution $(\hat x, \hat X) \in \cF$ of (RLT) with the same objective function value. Let 
 \begin{equation} \label{corr_feas_soln}
 \hat x = \begin{bmatrix}
     M & P 
 \end{bmatrix} \hat x_A \in \R^n, \quad \hat X = \begin{bmatrix}
     M & P 
 \end{bmatrix} \hat X_A \begin{bmatrix}
     M & P 
 \end{bmatrix}^T \in \cS^n,
 \end{equation}
where $M$ and $P$ are defined as in \eqref{matrix_defs}. By \eqref{decomp_polyhedra_2} and \eqref{tildedef3}, we conclude that $\hat x \in F$, i.e., $G^T \hat x \leq g$ and $H^T \hat x = h$. 
 
 Since $G^T v^i \leq g$ for each $i = 1,\ldots,s$, and $G^T d^j \leq 0$ for each $j = 1,\ldots,t$, we obtain 
\[
G^T \begin{bmatrix}
     M & P 
 \end{bmatrix} - g (a_A)^T \leq 0, 
 \]
where we used \eqref{matrix_defs} and \eqref{tildedef3}. Since $\hat X_A \geq 0$, we have
\begin{eqnarray*}
0 & \leq & \left(G^T \begin{bmatrix}
     M & P 
 \end{bmatrix} - g (a_A)^T\right) \hat X_A \left(G^T \begin{bmatrix}
     M & P 
 \end{bmatrix} - g (a_A)^T\right)^T \\
 & = & G^T \hat X G - G^T \hat x g^T - g \hat x^T G + g g^T,
\end{eqnarray*}
where we used \eqref{def_cFA} and \eqref{corr_feas_soln} in the second line. 

 Since $H^T v^i = h$ for each $i = 1,\ldots,s$, and $H^T d^j = 0$ for each $j = 1,\ldots,t$, we obtain 
\[
H^T \begin{bmatrix}
     M & P 
 \end{bmatrix} = h (a_A)^T, 
 \]
where we used \eqref{matrix_defs} and \eqref{tildedef3}.
Therefore,
\begin{eqnarray*}
H^T \hat X & = & H^T \begin{bmatrix}
     M & P 
 \end{bmatrix} \hat X_A \begin{bmatrix}
     M & P 
 \end{bmatrix}^T  \\ 
 & = & h (a_A)^T \hat X_A \begin{bmatrix}
     M & P 
 \end{bmatrix}^T \\
 & = & h (\hat x_A)^T \begin{bmatrix}
     M & P 
 \end{bmatrix}^T \\
 & = & h \hat x^T,
\end{eqnarray*}
where we used \eqref{def_cFA} and \eqref{corr_feas_soln} in the third line. Therefore, $(\hat x , \hat X) \in \mathcal{F}$. Furthermore, 
\begin{eqnarray*}
\textstyle\frac{1}{2}\langle Q, \hat X \rangle + c^T \hat x & = &  \textstyle\frac{1}{2}\left\langle Q, \begin{bmatrix}
     M & P 
 \end{bmatrix} \hat X_A \begin{bmatrix}
     M & P 
 \end{bmatrix}^T \right\rangle + c^T \begin{bmatrix}
     M & P 
 \end{bmatrix} \hat x_A\\
 & = & \textstyle\frac{1}{2}\left\langle Q_A, \hat X_A  \right\rangle + (c_A)^T \hat x_A,
\end{eqnarray*}
where we used \eqref{corr_feas_soln} in the first line, and \eqref{tempdef2} and \eqref{tempdef3} in the second line. Therefore, for each $(\hat x_A, \hat X_A) \in \cF_A$, there exists a corresponding solution $(\hat x, \hat X) \in \cF$ with the same objective function value. We conclude that $\ell^*_R \leq \ell^*_{RA} \leq \ell^*$. 
\end{proof}

By Proposition~\ref{RLT_comp1}, the RLT relaxation (RLTA) of the alternative formulation (QPA) is at least as tight as the RLT relaxation (RLT) of the original formulation (QP). An interesting question is whether the lower bounds arising from the two relaxations are in fact equal (i.e., $\ell^*_R = \ell^*_{RA}$). Our next example illustrates that this is, in general, not true. 

\begin{example} \label{Example2}
Consider the following instance of (QP) for $n = 2$, where
\[
Q = \begin{bmatrix} -2 & 2 \\ 2 & 2 \end{bmatrix}, \quad c = \begin{bmatrix} 0 \\ -2 \end{bmatrix}, 
\]
and 
\[
F = \left\{x \in \R^2: 0 \leq x_j \leq 1, \quad j = 1,2\right\}, 
\]
i.e., we have $p = 0$, $m = 4$, $G = \begin{bmatrix} I & - I \end{bmatrix}$, and $g^T = \begin{bmatrix} e^T & 0 \end{bmatrix}$, where $e \in \R^2$ and $I \in \cS^2$ denotes the identity matrix. For the RLT relaxation of the original formulation, we obtain $\ell^*_R = -\textstyle\frac{3}{2}$, and an optimal solution is given by \eqref{different_vertex}. By \eqref{box_vertices}, we have $F = \textrm{conv}\left(\{v^1,\ldots,v^4\}\right)$. Defining $M = \begin{bmatrix} v^1 \cdots v^4 \end{bmatrix} \in \R^{2 \times 4}$ and using \eqref{tempdef2}, \eqref{tempdef3}, and \eqref{tempdef4}, we obtain
\[
Q_A = M^T Q M = \begin{bmatrix} 0 & 0 & 0 & 0 \\ 0 & 2 & 2 & 4 \\ 0 & -2 & 2 & 0 \\ 0 & 4 & 0 & 4\end{bmatrix}, \quad c_A = M^T c = \begin{bmatrix} 0 \\ -2 \\ 0 \\ -2 \end{bmatrix}, \quad a_A = \begin{bmatrix} 1 \\ 1 \\ 1 \\ 1 \end{bmatrix}. 
\]
Therefore, (QP) can be equivalently formulated as (QPA), which is an instance of a standard quadratic program. By Corollary~\ref{StQP_vertices}, we have $\ell^*_{RA} = -1$ and an optimal solution is 
\[
\hat x_A = \begin{bmatrix} 0 \\ 0 \\ \textstyle\frac{1}{2} \\ \textstyle\frac{1}{2} \end{bmatrix}, \quad \hat X_A = \begin{bmatrix} 0 & 0 & 0 & 0 \\ 0 & 0 & 0 & 0 \\ 0 & 0 & 0 & \textstyle\frac{1}{2} \\ 0 & 0 & \textstyle\frac{1}{2} & 0 \end{bmatrix}.
\]
Therefore, we obtain $\ell^*_R = -\textstyle\frac{3}{2} < -1 = \ell^*_{RA}$. In fact, for this instance of (QP), we have $\ell^* = -1$, which is attained at $x^* = \begin{bmatrix} 1 & 0 \end{bmatrix}^T$. Therefore, the RLT relaxation (RLTA) of the alternative formulation (QPA) is not only tighter than that of the original formulation but is, in fact, an exact relaxation.
\end{example}

As illustrated by Example~\ref{Example2}, despite the fact that (QP) and (QPA) are equivalent formulations, (RLTA) may lead to a strictly tighter relaxation of (QP) than (RLT). Therefore, the quality of the RLT relaxation may depend on the particular formulation. Recall, however, that the alternative formulation (QPA) may, in general, have an exponential size. We close this section with the following result on the RLT relaxation of the original formulation.

\begin{corollary} \label{RLT_upper_bound}
Consider a general quadratic program, where $F$ given by \eqref{def_F} is nonempty. Let $v^i \in F_i,~i = 1,\ldots,s$, where each $F_i \subseteq F,~i = 1,\ldots,s$, denotes a minimal face of $F$. Then, 
\[
\ell^*_R \leq \min\left\{\min\limits_{k = 1,\ldots,s}\textstyle\left\{\frac{1}{2} (v^k)^T Q v^k + c^T v^k\right\}, \min\limits_{1 \leq i < j \leq s}
\frac{1}{2}\left((v^i)^T Q v_j + c^T (v^i + v^j)\right)\right\}.
\]   
\end{corollary}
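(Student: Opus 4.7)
The plan is to obtain the bound as a direct consequence of the analysis of the alternative formulation (QPA). By Proposition~\ref{RLT_comp1}, any feasible point $(\hat x_A,\hat X_A) \in \cF_A$ gives rise, via \eqref{corr_feas_soln}, to a feasible point of $\cF$ with the same objective value, so every value of the (RLTA) objective at a feasible point of $\cF_A$ upper-bounds $\ell^*_R$. Since $F_A$ given by \eqref{def_FA} belongs to the specific class of Section~\ref{specific_QP_class} with $a = a_A$, Proposition~\ref{complete_vertices} supplies an explicit list of vertices of $\cF_A$, and evaluating the (RLTA) objective at each such vertex produces an upper bound on $\ell^*_R$.

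To carry this out, observe that in our setting $\mathbf{P} = \{1,\ldots,s\}$ and $(a_A)_k = 1$ for $k \in \mathbf{P}$, so the vertices of $\cF_A$ listed in Proposition~\ref{complete_vertices} reduce to $(e^k,e^k(e^k)^T)$ for $k = 1,\ldots,s$ together with $\left(\tfrac{1}{2}(e^i + e^j),\tfrac{1}{2}(e^i(e^j)^T + e^j(e^i)^T)\right)$ for $1 \leq i < j \leq s$, where the standard basis vectors live in $\R^{n_A}$. Using $Q_A = M^T Q M$ and $c_A = M^T c$ from \eqref{tildedef1}--\eqref{tildedef2} with $M = \begin{bmatrix} v^1 & \cdots & v^s \end{bmatrix}$, one immediately reads off $(e^k)^T Q_A e^k = (v^k)^T Q v^k$, $(c_A)^T e^k = c^T v^k$, $(e^i)^T Q_A e^j = (v^i)^T Q v^j$, and $(c_A)^T(e^i + e^j) = c^T(v^i + v^j)$. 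Substituting these identities into the two types of (RLTA) objective values and taking the minimum over the vertex list produces precisely the right-hand side of the claimed inequality.

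I do not expect a substantive technical obstacle; the argument is essentially bookkeeping once Propositions~\ref{RLT_comp1} and~\ref{complete_vertices} are in place. The only minor point to watch is that we do not need to invoke Corollary~\ref{RLT_specific_closed_form} in its equational form, which would tacitly require $\ell^*_{RA}$ to be finite: the upper bound we seek follows directly from feasibility of each individual vertex of $\cF_A$ via the construction of Proposition~\ref{RLT_comp1}, regardless of whether (RLTA) is bounded. This yields the stated inequality without any further assumption on $F$ beyond nonemptiness.
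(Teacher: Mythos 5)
Your proof is correct, and it is structured differently from the paper's. The paper splits into two cases: when $F$ has at least one vertex, each minimal face is a singleton and the bound follows directly from Propositions~\ref{vertex1} and~\ref{midpoint-vertex} applied to $\cF$ itself; only when $F$ has no vertices does the paper pass to (QPA), where it invokes Proposition~\ref{RLT_comp1} together with Corollary~\ref{RLT_specific_closed_form}. You instead run the (QPA) argument uniformly in both cases, which is cleaner: the points $(e^k, e^k(e^k)^T)$ and $\bigl(\tfrac{1}{2}(e^i+e^j),\tfrac{1}{2}(e^i(e^j)^T+e^j(e^i)^T)\bigr)$ are feasible for $\cF_A$, the map \eqref{corr_feas_soln} of Proposition~\ref{RLT_comp1} sends them to feasible points of $\cF$ with the same objective value, and the identities $(e^i)^T Q_A e^j = (v^i)^T Q v^j$, $(c_A)^T e^k = c^T v^k$ do the rest. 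Your remark about not invoking Corollary~\ref{RLT_specific_closed_form} is a genuine improvement in rigor: that corollary assumes $\ell^*_{RA}$ finite, and since $F_A$ is unbounded whenever $t \geq 1$, finiteness is not automatic; the paper's second case implicitly needs the extra (trivial) observation that the bound holds vacuously when $\ell^*_R = -\infty$, whereas your argument needs only feasibility of the listed points and so sidesteps the issue entirely. One small economy you could make: you do not actually need the full strength of Proposition~\ref{complete_vertices} (the characterization of \emph{all} vertices of $\cF_A$); feasibility of the listed points, verified directly from \eqref{def_cFA} or via Propositions~\ref{vertex1} and~\ref{midpoint-vertex} applied to $F_A$, suffices.
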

\begin{proof}
If $F$ contains at least one vertex, then the assertion follows from Propositions~\ref{vertex1} and \ref{midpoint-vertex} since each $v^i,~i = 1,\ldots,s$, is a vertex. Otherwise, by \eqref{def_specific_V}, 
the vertices of $F_A$ given by \eqref{def_FA} are $e^j \in \R^{n_A},~j = 1,\ldots,s$. The assertion follows directly from Proposition~\ref{RLT_comp1} and Corollary~\ref{RLT_specific_closed_form} by observing that $(e^i)^T Q_A e^j = (v^i)^T Q v^j$ for each $i = 1,\ldots,s$ and each $j = 1,\ldots,s$ by \eqref{matrix_defs} and \eqref{tildedef1}. 
\end{proof}

\section{Duality and Optimality Conditions} \label{Sec4}

In this section, we focus on the dual problem of (RLT) and discuss optimality conditions.

By defining the dual variables $(u,w,R,S) \in \R^m \times \R^p \times \R^{p \times n} \times \cS^m$ corresponding to the four sets of constraints in \eqref{def_cF}, respectively, the dual of (RLT) is given by
\[
\begin{array}{llrcl}
\textrm{(RLT-D)} & \max\limits_{u \in \R^m, w \in \R^p, R \in \R^{p \times n}, S \in \cS^m} & -u^T g + w^T h - \displaystyle\frac{1}{2}g^T S g & & \\
 & \textrm{s.t.} & & & \\
 & & -G u + H w - R^T h - G S g & = & c\\
 & & R^T H^T + H R + G S G^T & = & Q \\
 & & S & \geq & 0 \\
 & & u & \geq & 0.
\end{array}
\]

Note that the variable $S \in \cS^m$ is scaled by a factor of $\textstyle\frac{1}{2}$ in (RLT-D). We first review the optimality conditions. 

\begin{lemma} \label{opt_cond}
Suppose that (QP) has a nonempty feasible region. 
Then, $(\hat x, \hat X) \in \cF$ is an optimal solution of (RLT) if and only if there exists 
$(\hat u, \hat w, \hat R, \hat S) \in \R^m_+ \times \R^p \times \R^{p \times n} \times \cN^m$ such that
\begin{eqnarray} \label{opt_form}
c & = & -G \hat u + H \hat w - \hat R^T h - G \hat S g, \label{opt_c} \\
Q & = & \hat R^T H^T + H \hat R + G \hat S G^T, \label{opt_Q} \\
\hat u^T \hat r & = & 0, \label{opt_cs1} \\
\left\langle \hat S, G^T \hat X G + \hat r g^T + g \hat r^T - gg^T \right\rangle & = & 0, \label{opt_cs2}
\end{eqnarray}
where $\hat r = g - G^T \hat x \in \R^m_+$.
\end{lemma}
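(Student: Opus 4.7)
The plan is to apply textbook linear programming duality to the primal-dual pair (RLT) and (RLT-D). The statement is essentially the KKT conditions for this LP pair, so the main work lies in verifying that (RLT-D) really is the LP dual of (RLT), and in recognizing the two complementary slackness conditions in the particular form displayed in \eqref{opt_cs1}--\eqref{opt_cs2}.

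To derive (RLT-D), I would form the Lagrangian of (RLT) with multipliers $u \in \R^m_+$ attached to $g - G^T x \geq 0$, $w \in \R^p$ attached to $H^T x = h$, $R \in \R^{p \times n}$ attached to $H^T X - h x^T = 0$, and $S \in \cN^m$ (with a $\textstyle\frac{1}{2}$ scaling factor, as noted in the text) attached to the product constraint. Collecting the coefficient of $x$ and setting it to zero yields the dual equality \eqref{opt_c}; collecting the coefficient of $X$, while using the symmetry of $X$ to symmetrize $HR$ to $\textstyle\frac{1}{2}(HR + R^T H^T)$ and exploiting the symmetry of $S$ to handle $\langle S, G^T X G \rangle$, yields \eqref{opt_Q}. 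The remaining constant terms give precisely the dual objective $-u^T g + w^T h - \textstyle\frac{1}{2} g^T S g$.

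Since (QP) is feasible by hypothesis, $(\hat x, \hat x \hat x^T) \in \cF$ witnesses that (RLT) is feasible; together with the assumption that $(\hat x, \hat X)$ attains the optimal value, LP strong duality guarantees the existence of an optimal dual solution $(\hat u, \hat w, \hat R, \hat S) \in \R^m_+ \times \R^p \times \R^{p \times n} \times \cN^m$ with matching objective value, and conversely any primal-dual pair satisfying primal feasibility, dual feasibility, and complementary slackness is optimal for both problems. Because $w$ and $R$ are free multipliers attached to equality constraints, only two complementary slackness conditions remain: the pairing of $\hat u \geq 0$ with $\hat r = g - G^T \hat x \geq 0$ yields $\hat u^T \hat r = 0$, which is \eqref{opt_cs1}; and the pairing of $\hat S \geq 0$ with the matrix slack $G^T \hat X G - G^T \hat x g^T - g \hat x^T G + g g^T \geq 0$ yields an inner-product identity that becomes \eqref{opt_cs2} upon substituting $G^T \hat x = g - \hat r$ into the two mixed terms and simplifying via $-g g^T + \hat r g^T - g g^T + g \hat r^T + g g^T = \hat r g^T + g \hat r^T - g g^T$.

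I do not foresee a genuine obstacle, as the argument is essentially bookkeeping. The most error-prone step is the coefficient matching in the derivation of \eqref{opt_Q}: one must symmetrize $\langle R, H^T X \rangle$ and $\langle S, G^T X G \rangle$ consistently with $X \in \cS^n$ and $S \in \cS^m$, and keep track of the $\textstyle\frac{1}{2}$ in the primal objective so that it propagates correctly to the dual equations. Noting that $R$ is taken as a general matrix in $\R^{p \times n}$, rather than symmetric, because it is paired with the nonsymmetric constraint $H^T X - h x^T = 0$, prevents an easy sign or symmetry mistake.
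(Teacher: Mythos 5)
Your proposal is correct and takes essentially the same route as the paper: the paper's proof simply rewrites the product-constraint slack as $G^T \hat X G + \hat r g^T + g \hat r^T - gg^T$ using $\hat r = g - G^T \hat x$ and then appeals directly to the LP optimality (complementary slackness) conditions for the primal--dual pair (RLT)/(RLT-D). Your additional bookkeeping on deriving (RLT-D) and symmetrizing the multipliers is consistent with the dual as stated in the paper and fills in details the paper leaves implicit.
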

\begin{proof}
Since $(\hat x, \hat X) \in \cF$, we have
\[
G^T \hat X G - G^T \hat x g^T - g \hat x^T G + g g^T = G^T \hat X G + \hat r g^T + g \hat r^T - g g^T \geq 0,
\]
where we used $\hat r = g - G^T \hat x$. The claim now follows from the optimality conditions for (RLT) and (RLT-D).    
\end{proof}

We remark that Lemma~\ref{opt_cond} gives a recipe for constructing instances of (QP) with a known optimal solution of (RLT) and a finite RLT lower bound on the optimal value. We will discuss this further in Section~\ref{Sec6}.

By Lemma~\ref{bounded_imp}, if $F$ is nonempty and bounded, then $\cF$ is nonempty and bounded, which implies that (RLT) has a finite optimal value. By Lemma~\ref{opt_cond}, we conclude that the (RLT-D) always has a nonempty feasible region under this assumption. 

For the first set of vertices of $\cF$ given by Proposition~\ref{vertex1}, we next establish necessary and sufficient optimality conditions.

\begin{proposition} \label{type1vertices}
Suppose that $v \in F$ is a vertex. Suppose that $G = [G^0 \quad G^1]$ so that $(G^0)^T v = g^0$ and $(G^1)^T v < g^1$, where $G^0 \in \R^{n \times m_0}$, $G^1 \in \R^{n \times m_1}$, $g^0 \in \R^{m_0}$, and $g^1 \in \R^{m_1}$. Then, $(v,v v^T) \in \cF$ is an optimal solution of (RLT) if and only if there exists $(\hat u, \hat w, \hat R, \hat S) \in \R^m \times \R^p \times \R^{p \times n} \times \cS^m$, where $\hat u \in \R^m$ and $\hat S \in \cS^m$ can be accordingly partitioned as
\begin{equation} \label{defuStype1}
\hat u = \begin{bmatrix} \hat u^0 \\ 0 \end{bmatrix} \in \R^m_+, \quad \hat S = \begin{bmatrix}\hat S^{00} & \hat S^{01} \\ (\hat S^{01})^T & 0\end{bmatrix} \in \cN^m,
\end{equation}
where $\hat u^0 \in \R^{m_0}$, $\hat S^{00} \in \cS^{m_0}$, and $\hat S^{01} \in \R^{m_0 \times m_1}$, such that \eqref{opt_c} and \eqref{opt_Q} are satisfied. Furthermore, if $\hat S^{00} \in \cS^{m_0}$ is strictly positive and $\hat u_0 \in \R^{m_0}$ is strictly positive, then $(v,vv^T) \in \cF$ is the unique optimal solution of (RLT).  
\end{proposition}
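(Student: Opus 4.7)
The plan is to apply Lemma~\ref{opt_cond} and track how the complementarity conditions \eqref{opt_cs1} and \eqref{opt_cs2} force the structure \eqref{defuStype1} on the dual variables. First, partition the slack $\hat r = g - G^T v$ conformally with $G$, so that $\hat r^0 = g^0 - (G^0)^T v = 0$ and $\hat r^1 = g^1 - (G^1)^T v > 0$. A short computation using $\hat X = vv^T$ and $G^T v = g - \hat r$ shows that the quadratic slack matrix appearing in \eqref{def_cF} collapses to
\[
G^T \hat X G + \hat r g^T + g \hat r^T - gg^T \;=\; \hat r \hat r^T \;=\; \begin{bmatrix} 0 & 0 \\ 0 & \hat r^1 (\hat r^1)^T \end{bmatrix},
\]
whose $(1,1)$ block is componentwise strictly positive.

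For the forward implication, assume $(v,vv^T)$ is optimal for (RLT). Lemma~\ref{opt_cond} yields a dual $(\hat u, \hat w, \hat R, \hat S)$ satisfying \eqref{opt_c}--\eqref{opt_cs2}. Condition \eqref{opt_cs1} combined with $\hat u \geq 0$ and $\hat r^1 > 0$ forces $\hat u^1 = 0$, while \eqref{opt_cs2} combined with $\hat S \geq 0$ and the strict componentwise positivity of $\hat r^1(\hat r^1)^T$ forces $\hat S^{11} = 0$. The off-diagonal block $\hat S^{01}$ is unrestricted beyond nonnegativity, since it pairs with a zero block of $\hat r \hat r^T$. This gives precisely the structure \eqref{defuStype1}. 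For the reverse implication, any dual of the form \eqref{defuStype1} satisfying \eqref{opt_c} and \eqref{opt_Q} automatically satisfies \eqref{opt_cs1} and \eqref{opt_cs2} by inspection, so Lemma~\ref{opt_cond} certifies that $(v,vv^T)$ is optimal.

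For the uniqueness claim, suppose $\hat u^0 > 0$ and $\hat S^{00} > 0$ componentwise, and let $(\tilde x, \tilde X) \in \cF$ be any optimal solution of (RLT). By LP strong duality, $(\tilde x, \tilde X)$ must satisfy complementarity with the same dual. Then $\hat u^0 > 0$ and $\hat u^T \tilde r = 0$ force $(G^0)^T \tilde x = g^0$; together with $H^T \tilde x = h$ and the full row rank of $\begin{bmatrix} G^0 & H \end{bmatrix}$ provided by Lemma~\ref{vertex_chars}(iii), this uniquely determines $\tilde x = v$. Similarly, $\hat S^{00} > 0$ and \eqref{opt_cs2} applied at $(\tilde x, \tilde X)$ force the $(0,0)$ block of the quadratic slack to vanish, which after substituting $\tilde x = v$ reduces to $(G^0)^T(\tilde X - vv^T)G^0 = 0$. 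Using $H^T \tilde X = h \tilde x^T = h v^T = H^T vv^T$, I also obtain $H^T(\tilde X - vv^T) = 0$, whence
\[
\begin{bmatrix} G^0 & H \end{bmatrix}^T (\tilde X - vv^T) \begin{bmatrix} G^0 & H \end{bmatrix} = 0,
\]
and the full row rank of $\begin{bmatrix} G^0 & H \end{bmatrix}$ combined with Lemma~\ref{matrix-equation} yields $\tilde X = vv^T$.

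The main obstacle is the careful bookkeeping with the block partitions of $G$, $\hat u$, $\hat S$, and $\hat r$, and in particular verifying the clean collapse of the quadratic slack to $\hat r \hat r^T$ at $\hat X = vv^T$; once this identity is in hand, the structural claim falls out of elementwise complementarity, and the uniqueness argument is a standard strict-complementarity reading combined with Lemma~\ref{matrix-equation}.
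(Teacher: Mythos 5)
Your proof is correct and follows essentially the same route as the paper's: the collapse of the quadratic slack at $(v,vv^T)$ to $\hat r\hat r^T$, elementwise complementarity via Lemma~\ref{opt_cond} to force the block structure of $\hat u$ and $\hat S$, and Lemma~\ref{matrix-equation} with the full row rank of $\begin{bmatrix} G^0 & H\end{bmatrix}$ for uniqueness. If anything, your uniqueness step is slightly more explicit than the paper's, since you first pin down $\tilde x = v$ before concluding $\tilde X = vv^T$.
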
 
\begin{proof}
Suppose that $v \in \R^n$ is a vertex of $F$. By Proposition~\ref{vertex1}, $(v, v v^T)$ is a vertex of $\cF$. Let $r^1 = g^1 - (G^1)^T v > 0$. Then, 
\[
\begin{bmatrix} (G^0)^T \\ (G^1)^T \end{bmatrix} v v^T  \begin{bmatrix} (G^0)^T \\ (G^1)^T \end{bmatrix}^T - \begin{bmatrix} (G^0)^T \\ (G^1)^T \end{bmatrix} v \begin{bmatrix} g^0 \\ g^1 \end{bmatrix}^T - \begin{bmatrix} g^0 \\ g^1 \end{bmatrix} v^T \begin{bmatrix} (G^0)^T \\ (G^1)^T \end{bmatrix}^T + g g^T = \begin{bmatrix} 0 & 0 \\ 0 & r^1 (r^1)^T \end{bmatrix}. 
\]
The first assertion now follows from Lemma~\ref{opt_cond}.  

For the second part, suppose further that $\hat S_{00} \in \cS^{m_0}$ is strictly positive and $\hat u_0 \in \R^{m_0}$ is strictly positive. Let $(\hat x, \hat X) \in \cF$ be an arbitrary feasible solution of (RLT). Then, using the same partitions of $G$ and $g$ as before, we have 
\begin{eqnarray*}
(G^0)^T \hat x & = & g^0 - r^0 \\
(G^1)^T \hat x & = & g^1 - r^1 \\
H^T \hat x & = & h \\
H^T \hat X & = & h \hat x^T\\
(G^i)^T \hat X G^j & \geq & -r^i (g^j)^T - g^i (r^j)^T + g^i (g^j)^T, \quad (i,j) \in \{(0,0),(0,1),(1,1)\}, 
\end{eqnarray*}
where $r^0 \in \R^{m_0}_+$ and $r^1 \in \R^{m_1}_+$. By Lemma~\ref{opt_cond}, $(\hat x, \hat X) \in \cF$ is an optimal solution if and only if $r^0 = 0$ and $(G^0)^T \hat X G^0 - g^0 (g^0)^T = 0$. Note that any $(\hat x, \hat X) \in \cF$ with this property should satisfy the following equation:
\[
\begin{bmatrix} (G^0)^T \\ H^T \end{bmatrix} \hat X \begin{bmatrix} (G^0)^T \\ H^T \end{bmatrix}^T = \begin{bmatrix} g^0 \\ h \end{bmatrix} \begin{bmatrix} g^0 \\ h \end{bmatrix}^T.
\]
By Lemma~\ref{matrix-equation}, it follows that $\hat X = vv^T$ is the only solution to this system since $\begin{bmatrix} G^0 & H \end{bmatrix}$ has full row rank by Lemma~\ref{vertex_chars}~(iii). By Lemma~\ref{opt_cond}, $(v,vv^T) \in \cF$ is the unique optimal solution of (RLT).
\end{proof}

Next, we present necessary and sufficient optimality conditions for the second set of vertices of $\cF$ given by Proposition~\ref{midpoint-vertex}.

\begin{proposition} \label{type2vertices}
Let $v^1 \in F$ and $v^2 \in F$ be two vertices such that $v^1 \neq v^2$. Suppose that $G = [G^0 \quad G^1 \quad G^2 \quad G^3]$ so that $(G^0)^T v^1 = (G^0)^T v^2 = g^0$;  $(G^1)^T v^1 = g^1$ and $(G^1)^T v^2 < g^2$; $(G^2)^T v^1 < g^1$ and $(G^2)^T v^2 = g^2$; $(G^3)^T v^1 < g^3$ and $(G^3)^T v^2 < g^3$, 
where $G^0 \in \R^{n \times m_0}$, $G^1 \in \R^{n \times m_1}$, $G^2 \in \R^{n \times m_2}$, $G^3 \in \R^{n \times m_3}$, $g^0 \in \R^{m_0}$, $g^1 \in \R^{m_1}$, $g^2 \in \R^{m_2}$, and $g^3 \in \R^{m_3}$. Then, $(\frac{1}{2}(v^1 + v^2), \frac{1}{2} (v^1 (v^2)^T + v^2 (v^1)^T)) \in \cF$ is an optimal solution of (RLT) if and only if there exists $(\hat u, \hat w, \hat R, \hat S) \in \R^m \times \R^p \times \R^{p \times n} \times \cS^m$, where $\hat u \in \R^m$ and $\hat S \in \cS^m$ can be accordingly partitioned as
\begin{equation} \label{defuStype2}
\hat u = \begin{bmatrix} \hat u^0 \\ 0 \\ 0 \\ 0 \end{bmatrix} \in \R^m_+, \quad \hat S = \begin{bmatrix}\hat S^{00} & \hat S^{01} & \hat S^{02} & \hat S^{03} \\ (\hat S^{01})^T & \hat S^{11} & 0 & 0 \\ (\hat S^{02})^T & 0 & \hat S^{22} & 0 \\ (\hat S^{03})^T & 0 & 0 & 0 \end{bmatrix} \in \cS^m,
\end{equation}
where $\hat u^0 \in R^{m_0}$, $\hat S^{kk} \in \cS^{m_k},~k = 0, 1, 2$, $\hat S^{0j} \in \R^{m_0 \times m_j},~j = 1, 2, 3$, such that \eqref{opt_c} and \eqref{opt_Q} are satisfied. Furthermore, if each of $\hat S^{00} \in \cS^{m_0}$, $\hat S^{01} \in \R^{m_0 \times m_1}$, $\hat S^{02} \in \R^{m_0 \times m_2}$, $\hat S^{11} \in \cS^{m_1}$, $\hat S^{22} \in \cS^{m_2}$, and $\hat u^0 \in \R^{m_0}$ is strictly positive, then $(\frac{1}{2}(v^1 + v^2), \frac{1}{2} (v^1 (v^2)^T + v^2 (v^1)^T)) \in \cF$ is the unique optimal solution of (RLT).
\end{proposition}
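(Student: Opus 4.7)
The plan mirrors the proof of Proposition~\ref{type1vertices}. I would first record what the RLT residuals look like at the proposed primal solution. From the computations already carried out in the proof of Proposition~\ref{midpoint-vertex}, the vector $\hat r = g - G^T \hat x = \tfrac{1}{2}(r^{(1)} + r^{(2)})$ vanishes only in its $0$-block, and the matrix residual equals
\[
G^T \hat X G + \hat r g^T + g \hat r^T - g g^T = \tfrac{1}{2}\left(r^{(1)} (r^{(2)})^T + r^{(2)} (r^{(1)})^T\right),
\]
whose $4 \times 4$ block pattern has zero blocks precisely at positions $(0,0), (0,1), (0,2), (0,3), (1,1), (2,2)$ (and their transposes) and strictly positive blocks elsewhere.

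Applying Lemma~\ref{opt_cond}, feasibility $(\hat x, \hat X) \in \cF$ is already known from Proposition~\ref{midpoint-vertex}. Complementary slackness $\hat u^T \hat r = 0$ together with $\hat u \geq 0$ forces $\hat u^1 = \hat u^2 = \hat u^3 = 0$, while $\langle \hat S, G^T \hat X G + \hat r g^T + g \hat r^T - g g^T \rangle = 0$ combined with the componentwise nonnegativity of $\hat S$ forces every block of $\hat S$ corresponding to a strictly positive block of the residual to vanish. This produces exactly the block structure of $\hat u$ and $\hat S$ stated in \eqref{defuStype2}, and the remaining dual conditions \eqref{opt_c} and \eqref{opt_Q} are inherited unchanged from Lemma~\ref{opt_cond}. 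Since every step is reversible, both directions of the equivalence follow.

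For the uniqueness claim, suppose dual variables with the stated strict positivity exist and let $(\hat x, \hat X) \in \cF$ be an arbitrary optimal solution. Strict positivity of $\hat u^0$ forces $(G^0)^T \hat x = g^0$, and strict positivity of each of $\hat S^{00}$, $\hat S^{01}$, $\hat S^{02}$, $\hat S^{11}$, $\hat S^{22}$ forces the corresponding blocks of the matrix residual at $(\hat x, \hat X)$ to vanish. I would then introduce the symmetric matrices
\[
Y_k = \hat X - v^k \hat x^T - \hat x (v^k)^T + v^k (v^k)^T, \quad k = 1, 2,
\]
and verify by direct substitution---using $(G^0)^T v^k = g^0$, $(G^1)^T v^1 = g^1$, $(G^2)^T v^2 = g^2$, $H^T v^k = h$, together with the vanishing of the residuals at $\hat x$ and $\hat X$---that $H^T Y_1 = 0$ and $(G^i)^T Y_1 G^j = 0$ for $(i,j) \in \{(0,0), (0,1), (1,1)\}$, with analogous identities for $Y_2$ using the indices $0$ and $2$. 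By Lemma~\ref{vertex_chars}~(iii), each of $\begin{bmatrix} G^0 & G^1 & H \end{bmatrix}$ and $\begin{bmatrix} G^0 & G^2 & H \end{bmatrix}$ has full row rank, so Lemma~\ref{matrix-equation} yields $Y_1 = Y_2 = 0$.

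Equating the two resulting expressions for $\hat X$ gives $w \hat x^T + \hat x w^T = v^1 (v^1)^T - v^2 (v^2)^T$, where $w = v^1 - v^2 \neq 0$. Rewriting the right-hand side as $\tfrac{1}{2}\bigl[(v^1 + v^2) w^T + w (v^1 + v^2)^T\bigr]$ and setting $u = \hat x - \tfrac{1}{2}(v^1 + v^2)$ reduces this to the symmetric rank-one equation $w u^T + u w^T = 0$. The main obstacle is extracting $u = 0$ from this identity: multiplying on the right by $w$ shows that $u$ must be a scalar multiple of $w$, and substituting back forces that scalar to be zero. Hence $\hat x = \tfrac{1}{2}(v^1 + v^2)$, and plugging into $Y_1 = 0$ recovers $\hat X = \tfrac{1}{2}(v^1 (v^2)^T + v^2 (v^1)^T)$, which establishes uniqueness.
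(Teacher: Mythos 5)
Your proposal is correct and follows essentially the same route as the paper: the equivalence is obtained by reading off the zero/positive block pattern of the residual matrix $\tfrac{1}{2}\bigl(r^{(1)} (r^{(2)})^T + r^{(2)} (r^{(1)})^T\bigr)$ and translating complementary slackness in Lemma~\ref{opt_cond} into the structure \eqref{defuStype2}, and uniqueness is forced by Lemma~\ref{matrix-equation} applied to the full-row-rank matrices $\begin{bmatrix} G^0 & G^1 & H \end{bmatrix}$ and $\begin{bmatrix} G^0 & G^2 & H \end{bmatrix}$. Your explicit $Y_1, Y_2$ computation and the reduction to $w u^T + u w^T = 0$ is a clean, fully worked-out version of the step the paper only sketches as ``a similar argument as in the proof of Proposition~\ref{midpoint-vertex}.''
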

\begin{proof} 
By Proposition~\ref{midpoint-vertex}, $(\frac{1}{2}(v^1 + v^2), \frac{1}{2} (v^1 (v^2)^T + v^2 (v^1)^T))$ is a vertex of $\cF$. The proof is similar to the proof of Proposition~\ref{type1vertices}. By a similar argument as in the proof of Proposition~\ref{midpoint-vertex}, we have 
\[
G^T \hat X G - G^T \hat x g^T - g \hat x^T G + g g^T 
= \begin{bmatrix} 0 & 0 & 0 & 0 \\
0 & 0 & + & + \\ 0 & + & 0 & + \\ 0 & + & + & + \end{bmatrix},
\]
where we assume that $G = \begin{bmatrix} G^0 & G^1 & G^2 & G^3 \end{bmatrix}$ and $g$ is partitioned accordingly, and $+$ denotes a submatrix with strictly positive entries. The first claim follows from Lemma~\ref{opt_cond}.

For the second assertion, suppose further that each of $\hat S^{00} \in \cS^{m_0}$, $\hat S^{01} \in \R^{m_0 \times m_1}$, $\hat S^{02} \in \R^{m_0 \times m_2}$, $\hat S^{11} \in \cS^{m_1}$, $\hat S^{22} \in \cS^{m_2}$, and $\hat u_0 \in \R^{m_0}$ is strictly positive. Let $(\hat x, \hat X) \in \cF$ be an arbitrary solution. Then, using the same partition of $G$ and $g$, we have 
\begin{eqnarray*}
(G^i)^T \hat x & = & g^i - r^i, \quad i = 0,1,2,3\\
H^T \hat x & = & h \\
H^T \hat X & = & h \hat x^T\\
(G^i)^T \hat X G^j & \geq & -r^i (g^j)^T - g^i (r^j)^T + g^i (g^j)^T, \quad 0 \leq i \leq j \leq 3,
\end{eqnarray*}
where $r^i \in \R^{m_i}_+,~i = 0,1,2,3$, and the last set of inequalities is componentwise. By Lemma~\ref{opt_cond}, $(\hat x, \hat X) \in \cF$ is an optimal solution if and only if 
\begin{eqnarray*}
   r^0 & = & 0 \\
   (G^0)^T \hat X G^0 & = & g^0 (g^0)^T\\
   (G^0)^T \hat X G^1 & = & -g^0 (r^1)^T + g^0 (g^1)^T\\
   (G^0)^T \hat X G^2 & = & -g^0 (r^2)^T + g^0 (g^2)^T\\
   (G^1)^T \hat X G^1 & = & -r^1 (g^1)^T - g^1 (r^1)^T + g^1 (g^1)^T\\
   (G^2)^T \hat X G^2 & = & -r^2 (g^2)^T - g^2 (r^2)^T + g^2 (g^2)^T.
\end{eqnarray*}
Note that $(\frac{1}{2}(v^1 + v^2), \frac{1}{2} (v^1 (v^2)^T + v^2 (v^1)^T)) \in \cF$ with $r^1 = g^1 - \frac{1}{2} (G^1)^T (v^1 + v^2)$ and $r^2 = g^2 - \frac{1}{2} (G^1)^T (v^1 + v^2)$ satisfies this system. Using a similar argument as in the proof of Proposition~\ref{midpoint-vertex}, one can show that this solution is unique. By Lemma~\ref{opt_cond}, we conclude that $(\frac{1}{2}(v^1 + v^2), \frac{1}{2} (v^1 (v^2)^T + v^2 (v^1)^T)) \in \cF$ is the unique optimal solution of (RLT).
\end{proof}

\section{Exact RLT Relaxations} \label{Sec5}

In this section, we present necessary and sufficient conditions in order for an instance of (QP) to admit an exact RLT relaxation. 

First, following~\cite{yildirim2020alternative}, we define the convex underestimator arising from RLT relaxations. To that end, let 
\begin{equation} \label{param_feas_region}
\cF(\hat x) = \{(x,X) \in \cF_{R}: x = \hat x\}, \quad \hat x \in F.
\end{equation}

We next define the following function:
\begin{equation} \label{conv_undest_rlt}
    \ell_R(\hat x) = \min_{x \in \R^n,X \in \cS^n} \left\{ \textstyle\frac{1}{2}\langle Q, X \rangle + c^T x: (x,X) \in \cF(\hat x)\right\}, \quad \hat x \in F.
\end{equation}

By~\cite{yildirim2020alternative}, $\ell_R(\cdot)$ is a convex underestimator of $q(\cdot)$ over $F$, i.e., $\ell_R(\hat x) \leq q(\hat x)$ for each $\hat x \in F$, and 
\begin{equation} \label{conv_underest_2}
\ell^*_R = \min\limits_{x \in F} \ell_R(x).
\end{equation}

By \eqref{conv_undest_rlt}, 
\begin{equation} \label{conv_underest_3}
\ell_R(\hat x) = c^T \hat x + \ell_R^0(\hat x),   
\end{equation}
where
\[
\begin{array}{rrcl}
\textrm{(RLT)}(\hat x) \quad \ell_R^0(\hat x) = \min\limits_{X \in \cS^n} & \frac{1}{2} \langle Q, X \rangle & & \\
 \textrm{s.t.} & & & \\
 & H^T X & = & h \hat x^T \\
 & G^T X G - G^T \hat x g^T - g \hat x^T G + g g^T & \geq & 0.
 \end{array}
 \]
Note that (RLT)$(\hat x)$ has a nonempty feasible region for each $\hat x \in F$ since $\hat X = \hat x \hat x^T$ is a feasible solution. By defining the dual variables $(R,S) \in \R^{p \times n} \times \cS^m$ corresponding to the first and second sets of constraints in (RLT)$(\hat x)$, respectively, the dual of (RLT)$(\hat x)$ is given by
\[
\begin{array}{llrcl}
\textrm{(RLT-D)}(\hat x) & \max\limits_{S \in \cS^m, R \in \R^{p \times n}} & h^T R \hat x + g^T S G \hat x - \textstyle\frac{1}{2}g^T S g & & \\
 & \textrm{s.t.} & & & \\
 & & R^T H^T + H R + G S G^T & = & Q \\
 & & S & \geq & 0. 
\end{array}
\]

We start with a useful result on the convex underestimator $\ell_R(\cdot)$.

\begin{lemma} \label{unb_parametric}
Suppose that $F$ is nonempty. If there exists $\hat x \in F$ such that $\ell_R(\hat x) = -\infty$, then $\ell_R(\tilde x) = -\infty$ for each $\tilde x \in F$. Therefore, $\ell^*_R = -\infty$.
\end{lemma}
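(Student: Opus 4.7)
The plan is to exploit the fact that the recession cone of the feasible region of $\textrm{(RLT)}(\hat x)$ is independent of the parameter $\hat x$. Since $\ell_R(\hat x) = c^T \hat x + \ell_R^0(\hat x)$ by \eqref{conv_underest_3}, the hypothesis $\ell_R(\hat x) = -\infty$ is equivalent to $\ell_R^0(\hat x) = -\infty$, which (since the problem is a linear program whose feasibility is guaranteed by $X = \hat x \hat x^T$) means that $\textrm{(RLT)}(\hat x)$ is unbounded. Therefore there must exist a recession direction $\hat D \in \cS^n$ of its feasible region along which the objective strictly decreases, that is,
\begin{equation*}
H^T \hat D = 0, \quad G^T \hat D G \geq 0, \quad \langle Q, \hat D \rangle < 0.
\end{equation*}

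Next I would observe that these three conditions do not involve $\hat x$ at all. Hence, for any other $\tilde x \in F$, the matrix $\tilde X_t = \tilde x \tilde x^T + t \hat D$ is feasible for $\textrm{(RLT)}(\tilde x)$ for every $t \geq 0$: the equality constraint $H^T \tilde X_t = h \tilde x^T$ is preserved because $H^T \hat D = 0$ and $H^T \tilde x \tilde x^T = h \tilde x^T$, while the inequality constraint is preserved because
\begin{equation*}
G^T \tilde X_t G - G^T \tilde x g^T - g \tilde x^T G + g g^T = (g - G^T \tilde x)(g - G^T \tilde x)^T + t \, G^T \hat D G \geq 0.
\end{equation*}
Evaluating the objective of $\textrm{(RLT)}(\tilde x)$ at $\tilde X_t$ gives $\tfrac{1}{2}\langle Q, \tilde x \tilde x^T\rangle + \tfrac{t}{2}\langle Q, \hat D \rangle$, which tends to $-\infty$ as $t \to \infty$. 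Consequently $\ell_R^0(\tilde x) = -\infty$, and therefore $\ell_R(\tilde x) = -\infty$ for every $\tilde x \in F$.

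Finally, the conclusion $\ell^*_R = -\infty$ is immediate from \eqref{conv_underest_2}, since it expresses $\ell^*_R$ as the minimum of $\ell_R(\cdot)$ over $F$, and we have just shown that $\ell_R(\cdot) \equiv -\infty$ on $F$. The whole argument is essentially a single observation (the parameter-independence of the recession cone), so there is no substantive obstacle; the only thing to be careful about is to verify explicitly that $\hat D$ is a recession direction of the feasible region of $\textrm{(RLT)}(\tilde x)$ rather than merely of $\textrm{(RLT)}(\hat x)$, which is exactly what the computation above confirms.
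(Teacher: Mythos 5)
Your proof is correct. The paper argues on the dual side: it notes that the feasible region of $\textrm{(RLT-D)}(\hat x)$ does not depend on $\hat x$, so infeasibility of the dual at one point implies infeasibility at every point of $F$, and then unboundedness of the primal follows from LP duality together with primal feasibility. You argue on the primal side: the recession cone of the feasible region of $\textrm{(RLT)}(\hat x)$ does not depend on $\hat x$, so a cost-decreasing recession direction $\hat D$ extracted at $\hat x$ can be re-used at every $\tilde x \in F$, and your computation verifying that $\tilde x \tilde x^T + t\hat D$ stays feasible (using $(g - G^T\tilde x)(g - G^T\tilde x)^T \geq 0$) is exactly right. The two arguments are dual to one another and rest on the same underlying observation of parameter-independence; yours is slightly more constructive in that it exhibits explicit unbounded rays rather than appealing to duality twice, at the mild cost of invoking the standard fact that a feasible LP that is unbounded below admits a recession direction with strictly negative objective (which is itself usually proved via duality or Minkowski--Weyl, so nothing is really saved). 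Either route is perfectly acceptable.
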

\begin{proof}
Suppose that there exists $\hat x \in F$ such that $\ell_R(\hat x) = -\infty$. By linear programming duality, (RLT-D)$(\hat x)$ is infeasible. Therefore, (RLT-D)$(\tilde x)$ is infeasible for each $\tilde x \in F$ since the feasible region of (RLT-D)$(\hat x)$ does not depend on $\hat x \in F$. Since (RLT)$(\tilde x)$ has a nonempty feasible region for each $\tilde x \in F$, (RLT)$(\tilde x)$ is unbounded below. By \eqref{conv_underest_3}, $\ell_R(\tilde x) = -\infty$ for each $\tilde x \in F$. The last assertion simply follows from \eqref{conv_underest_2}.
\end{proof}

We next establish another property of $\ell_R(\cdot)$.
\begin{lemma} \label{piecewise_lin}
Suppose that $F$ is nonempty and there exists $\hat x \in F$ such that $\ell_R(\hat x) > -\infty$. Then, $\ell_R(\cdot)$ is a piecewise linear convex function.
\end{lemma}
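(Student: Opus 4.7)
The plan is to apply LP duality to the parametric problem (RLT)$(\hat x)$ and then invoke the Minkowski--Weyl-style decomposition of the $\hat x$-independent dual feasible region (via Lemma~\ref{decomp_result}) to express $\ell_R^0(\cdot)$ as the pointwise maximum of finitely many affine functions of $\hat x$.

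First, combining the hypothesis with Lemma~\ref{unb_parametric} gives $\ell_R(\tilde x) > -\infty$, and hence $\ell_R^0(\tilde x) > -\infty$, for every $\tilde x \in F$. Since (RLT)$(\hat x)$ is always feasible (e.g., $X = \hat x \hat x^T$) and has a finite optimal value, strong LP duality yields, for each $\hat x \in F$,
\[
\ell_R^0(\hat x) = \max_{(R,S) \in \cD}\left\{h^T R \hat x + g^T S G \hat x - \textstyle\frac{1}{2} g^T S g\right\},
\]
where $\cD$ denotes the feasible region of (RLT-D)$(\hat x)$. Crucially, $\cD$ is independent of $\hat x$.

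Next, I would apply Lemma~\ref{decomp_result} to the polyhedron $\cD$ to obtain finitely many representatives $(R^i, S^i) \in \cD,~i = 1,\ldots,k$, of its minimal faces together with its recession cone $\cD_\infty$, so that $\cD = \textrm{conv}\{(R^1, S^1),\ldots,(R^k, S^k)\} + \cD_\infty$. For each $\hat x \in F$, finiteness of the dual value forces $h^T \Delta R \hat x + g^T \Delta S G \hat x - \textstyle\frac{1}{2} g^T \Delta S g \leq 0$ for every $(\Delta R, \Delta S) \in \cD_\infty$ (otherwise one could traverse such a ray and send the dual objective to $+\infty$). Decomposing any dual feasible point via the identity above, substituting into the linear dual objective, and combining with the feasibility of each $(R^i, S^i)$ then yields
\[
\ell_R^0(\hat x) = \max_{i = 1,\ldots,k}\left\{h^T R^i \hat x + g^T S^i G \hat x - \textstyle\frac{1}{2} g^T S^i g\right\}.
\]

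Each term on the right is affine in $\hat x$, so $\ell_R^0(\cdot)$ is the pointwise maximum of finitely many affine functions on $F$, which is piecewise linear and convex. Adding $c^T \hat x$ via \eqref{conv_underest_3} preserves these properties, yielding the claim. The main technical step is the recession-cone bound that rules out drifting to infinity within $\cD_\infty$; this is where finiteness of the dual value at every $\hat x \in F$, together with the $\hat x$-independence of $\cD$ and $\cD_\infty$, is indispensable. Everything else is routine polyhedral manipulation combined with LP duality.
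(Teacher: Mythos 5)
Your proposal is correct and follows essentially the same route as the paper's proof: finiteness of $\ell_R(\cdot)$ on all of $F$ via Lemma~\ref{unb_parametric}, LP duality, the decomposition of the $\hat x$-independent dual feasible region via Lemma~\ref{decomp_result}, and the resulting representation of $\ell_R^0(\cdot)$ as a pointwise maximum of finitely many affine functions. Your explicit justification of why the recession cone can be discarded is a detail the paper leaves implicit, but the argument is the same.
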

\begin{proof}
 Suppose that $F$ is nonempty and there exists $\hat x \in F$ such that $\ell_R(\hat x) > -\infty$. By using a similar argument as in the proof of Lemma~\ref{unb_parametric}, we conclude that $\ell_R(\tilde x) > -\infty$ for each $\tilde x \in F$. By linear programming duality, for each $\hat x \in F$ the optimal value of (RLT-D)$(\hat x)$ equals $\ell^0_R(\hat x)$. By Lemma~\ref{decomp_result}, there exist feasible solutions $(R^i,S^i) \in \R^{p \times n} \times \cS^m,~i = 1,\ldots,s$, of (RLT-D)$(\hat x)$ and a polyhedral cone $\cC \subseteq \cS^m \times \R^{p \times n}$ such that the feasible region of (RLT-D)$(\hat x)$ is given by $\textrm{conv}\{(R^i,S^i):i = 1,\ldots,s\} + \cC$. Since the optimal value of (RLT-D)$(\hat x)$ is finite, it follows that 
 \[
 \ell^0_R(\hat x) = \max\limits_{i = 1,\ldots,s} \left\{h^T R^i \hat x + g^T S^i G \hat x - \textstyle\frac{1}{2}g^T S^i g\right\}.
 \]
 The assertion follows from \eqref{conv_underest_3}. 
\end{proof}

We remark that a similar result was established in~\cite{QY23} for the convex underestimator $\ell_R(\cdot)$ arising from the RLT relaxation of quadratic programs with box constraints. For this class of problems, the hypothesis of 
Lemma~\ref{piecewise_lin} is vacuous. Therefore, Lemma~\ref{piecewise_lin} extends this result to RLT relaxations of all quadratic programs under a mild assumption. 

We next present a necessary and sufficient condition for instances of (QP) that admit an exact RLT relaxation. 

\begin{proposition} \label{exactness_cond}
Suppose that $F$ is nonempty and $\ell^*$ is finite. Then, 
the RLT relaxation given by (RLT) is exact, i.e., $\ell^*_R = \ell^*$, if and only if there exists $v \in F$ that lies on a minimal face of $F$ such that $(v,vv^T) \in \cF$ is an optimal solution of (RLT). Furthermore, in this case, any $\hat x \in F$ that lies on the same minimal face of $F$ is an optimal solution of (QP).    
\end{proposition}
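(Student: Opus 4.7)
The plan is to handle the two implications separately. For the ``if'' direction, the optimality of $(v,vv^T) \in \cF$ for (RLT) gives $\ell^*_R = \frac{1}{2}\langle Q, vv^T\rangle + c^T v = q(v) \geq \ell^*$; combined with $\ell^*_R \leq \ell^*$ from~\eqref{rlt-lb}, this forces $\ell^*_R = \ell^* = q(v)$, so $v$ is optimal for (QP). The harder ``only if'' direction will use the alternative formulation (QPA)/(RLTA) from Section~\ref{specific_QP_class}, whose vertices are completely classified by Proposition~\ref{complete_vertices}.

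Suppose $\ell^*_R = \ell^*$ is finite. By Proposition~\ref{RLT_comp1}, $\ell^*_{RA} = \ell^*$. By the Frank--Wolfe theorem applied to (QPA), this finite optimum is attained at some $x^*_A \in F_A$, so $(x^*_A, x^*_A(x^*_A)^T) \in \cF_A$ is optimal for (RLTA). Since $F_A$ has vertices, so does $\cF_A$ by Corollary~\ref{vertex_existence}, and (RLTA) therefore attains its optimum at a vertex, which by Proposition~\ref{complete_vertices} is of \emph{type~1}, $(e^k, e^k(e^k)^T)$, or \emph{type~2}, $(\textstyle\frac{1}{2}(e^i + e^j), \textstyle\frac{1}{2}(e^i(e^j)^T + e^j(e^i)^T))$. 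I claim at least one optimal vertex is of type~1. Otherwise, the optimal face of (RLTA) is the sum of the convex hull of its type-2 vertices and a subcone of the recession cone of $\cF_A$. A direct computation from the defining conditions of this recession cone (namely $D_A a_A = d_A$, $a_A^T d_A = 0$, $d_A \geq 0$, $D_A \geq 0$) shows that every recession direction $(d_A, D_A)$ satisfies $(D_A)_{\mathbf{P}\mathbf{P}} = 0$, and each type-2 vertex contributes zero to the diagonal of the $\mathbf{P}\times\mathbf{P}$-block of its $X_A$-component. Consequently the diagonal of $(x^*_A(x^*_A)^T)_{\mathbf{P}\mathbf{P}}$ would vanish entirely, yet its $k$-th entry is $((x^*_A)_k)^2$ and $\sum_{k\in\mathbf{P}} (x^*_A)_k = a_A^T x^*_A = 1$, a contradiction. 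Hence some $(e^k, e^k(e^k)^T)$ with $k \in \mathbf{P}$ is optimal for (RLTA), and the lift from the proof of Proposition~\ref{RLT_comp1} yields $(v^k, v^k(v^k)^T) \in \cF$ optimal for (RLT), with $v^k$ on the minimal face $F_k$ of $F$.

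For the ``furthermore'' claim, let $v$ lie on minimal face $F_0$ with $(v,vv^T)$ optimal for (RLT) and let $(\hat u, \hat w, \hat R, \hat S)$ be an optimal dual solution. By Lemma~\ref{opt_cond} and the complementary-slackness computation from Proposition~\ref{type1vertices}, $\hat u$ vanishes on the strictly satisfied constraints of $v$ and the $(G^1,G^1)$-block of $\hat S$ is zero. For any $\tilde x \in F_0$ we have $(G^0)^T \tilde x = g^0$, $H^T \tilde x = h$, and $\tilde r^1 := g^1 - (G^1)^T\tilde x > 0$. Substituting the dual identities $Q = \hat R^T H^T + H\hat R + G\hat S G^T$ and $c = -G\hat u + H\hat w - \hat R^T h - G\hat S g$ into $q(\tilde x) = \frac{1}{2}\tilde x^T Q \tilde x + c^T\tilde x$ and simplifying, the $\tilde x$-dependent contributions $h^T \hat R \tilde x$ and $(g^0)^T \hat S^{01} \tilde r^1$ appear with opposite signs in the two summands and cancel, leaving $q(\tilde x) = -\hat u^T g + \hat w^T h - \frac{1}{2}g^T \hat S g$, which equals the dual optimum $\ell^*_R = \ell^*$. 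Hence every $\tilde x \in F_0$ is optimal for (QP). The main obstacle is the contradiction in the forward direction, which rests on the structural fact that every recession direction of $\cF_A$ vanishes on the $\mathbf{P}\times\mathbf{P}$ block and every type-2 vertex contributes zero to its diagonal.
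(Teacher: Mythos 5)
Your proof is correct, but it reaches the ``only if'' direction and the ``furthermore'' clause by a genuinely different route than the paper. The paper works entirely inside $\cF$: starting from a QP-optimal $\hat x$ (Frank--Wolfe), it slides to a point $v$ on the minimal face sharing $\hat x$'s active set via the two-sided feasible direction $\hat d = v-\hat x$, $\hat D = \hat d\hat x^T+\hat x\hat d^T+\hat d\hat d^T$, concludes the objective is constant along that segment, and evaluates at $\epsilon=1$ to get optimality of $(v,vv^T)$; the ``furthermore'' is the same perturbation along the minimal face. You instead pass to the simplex reformulation, pin $\ell^*_{RA}=\ell^*$ via Proposition~\ref{RLT_comp1}, and combine the complete vertex classification of Proposition~\ref{complete_vertices} with the structural facts that recession directions of $\cF_A$ satisfy $(D_A)_{\mathbf{P}\mathbf{P}}=0$ (indeed $a_A^Td_A=0$, $d_A\ge 0$ force $(d_A)_{\mathbf{P}}=0$, and then $D_Aa_A=d_A$, $D_A\ge 0$ kill the block) and that type-2 vertices have zero diagonal, whereas $x_A^*(x_A^*)^T$ has a positive diagonal entry on $\mathbf{P}$ --- forcing a rank-one optimal vertex of (RLTA) that lifts to $(v^k,v^k(v^k)^T)\in\cF$. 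Your dual computation for the ``furthermore'' also checks out: complementary slackness at $(v,vv^T)$ gives $\hat u^1=0$ and $\hat S^{11}=0$, the $h^T\hat R\tilde x$ and $g^T\hat S\tilde r$ terms cancel, $\tilde r^T\hat S\tilde r=0$, and $q(\tilde x)$ collapses to the dual optimal value (you should state explicitly that all points of a minimal face share the same active set, so $\tilde r^1>0$ and the partition is uniform over $F_0$). The trade-off: the paper's argument is elementary and self-contained, while yours leans on the heavier machinery of Section~\ref{specific_QP_class} but offers a cleaner conceptual explanation --- exactness is equivalent to the lifted simplex relaxation being optimized at a rank-one vertex --- and your duality argument additionally exposes the certificate structure reused in Propositions~\ref{type1vertices} and~\ref{type2vertices}.
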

\begin{proof}
Suppose that $F$ is nonempty, $\ell^*$ is finite, and the RLT relaxation given by (RLT) is exact. Since $\ell^*_R = \ell^*$ is finite, the set of optimal solutions of (QP) is nonempty by the Frank-Wolfe theorem~\cite{FW1956}. Therefore,  for any optimal solution $\hat x \in F$ of (QP), $(\hat x, \hat x \hat x^T)$ is an optimal solution of (RLT) since $\ell^* = q(x^*) = \textstyle\frac{1}{2}\langle Q, \hat x \hat x^T \rangle + c^T \hat x = \ell^*_R$. If $\hat x \in F$ already lies on a minimal face of $F$, then we are done. Otherwise, let us define the submatrices $G^0$, $G^1$ and the subvectors $g^0$, $g^1$ such that \eqref{def_G0_G1} holds. Then, there exists $v \in F$ in a minimal face of $F$ such that $(G^0)^T v = g^0$. Let $\hat d = v - \hat x \in \R^n \backslash \{0\}$. Clearly, $(G^0)^T \hat d = 0$ and $H^T \hat d = 0$. Let us define $\hat D = \hat d \hat x^T + \hat x \hat d^T + \hat d \hat d^T \in \cS^n$. By using a similar argument as in the proof of Proposition~\ref{vertex1}, it is easy to verify that
\begin{eqnarray*}
H^T (\hat x \hat x^T + \hat D) & = & h \left(\hat x + \hat d\right)^T\\
(G^0)^T (\hat x \hat x^T + \hat D) G^0 - (G^0)^T \left(\hat x + \hat d\right) (g^0)^T - g^0 \left(\hat x + \hat d\right)^T G^0 + g^0 (g^0)^T& = & 0\\
(G^0)^T (\hat x \hat x^T + \hat D) G^1 - (G^0)^T \left(\hat x + \hat d\right) (g^1)^T - g^0 \left(\hat x + \hat d\right)^T G^1 + g^0 (g^1)^T& = & 0\\
(G^0)^T (\hat d \hat x^T + \hat x \hat d^T) G^1 - (G^0)^T \hat d (g^1)^T - g^0 \hat d^T G^1 & = & 0   
\end{eqnarray*}
Furthermore, since $(G^1)^T \hat x < g^1$ and $(G^1)^T \hat x \hat x^T G^1 - (G^1)^T \hat x (g^1)^T - g^1 \hat x^T G^1 + g^1 (g^1)^T = ((G^1)^T \hat x - g^1) ((G^1)^T \hat x - g^1)^T > 0$, it follows that there exists a real number $\epsilon > 0$ such that $(\hat x, \hat x \hat x^T) + \epsilon (\hat d, \hat D) \in \cF$ and $(\hat x, \hat x \hat x^T) - \epsilon (\hat d, \hat D) \in \cF$. By the optimality of $(\hat x, \hat x \hat x^T)$ for (RLT), we obtain $\textstyle\frac{1}{2}\langle Q, \hat D \rangle + c^T \hat d = 0$. Furthermore, for $\epsilon = 1$, we have 
\[
(\hat x, \hat x \hat x^T) + (\hat d, \hat D) = (v, v v^T) \in \cF,
\]
which implies that $(v, v v^T) \in \cF$ is an optimal solution of (RLT).

Conversely, suppose that there exists $v \in F$ that lies on a minimal face of $F$ such that $(v,vv^T)$ is an optimal solution of (RLT). Then, 
\[
\ell^*_R = \textstyle\frac{1}{2} \langle Q, vv^T \rangle + c^T v = \textstyle\frac{1}{2} v^T Q v + c^T v \geq \ell^*,
\]
where the last inequality follows from $v \in F$. Then, $\ell^*_R = \ell^*$ by \eqref{rlt-lb}. 

For the second assertion, $v \in F$ is clearly an optimal solution of (QP). If $v$ is a vertex, then there is nothing to prove since $v$ is the only point in the minimal face. Otherwise, let  $v^\prime \in F \backslash \{v\}$ lie on the same minimal face of $F$. Then, by defining $\hat d = v^\prime - v \in \R^n \backslash \{0\}$ and $\hat D = \hat d v^T + v \hat d^T + \hat d \hat d^T \in \cS^n$ and using a similar argument as above, we conclude that $\textstyle\frac{1}{2}\langle Q, \hat D \rangle + c^T \hat d = 0$, which implies that $\ell^*_R = \textstyle\frac{1}{2} \langle Q, vv^T \rangle + c^T v = \textstyle\frac{1}{2} \langle Q, v^\prime (v^\prime)^T \rangle + c^T v^\prime$. The second assertion follows.
\end{proof}

Proposition~\ref{exactness_cond} presents a necessary and sufficient condition in order for an instance of (QP) to admit an exact RLT relaxation. This result is a generalization of the corresponding result established for RLT relaxations of quadratic programs with box constraints~\cite{QY23}. In the next section, we discuss the implications of our results on the algorithmic construction of instances of (QP) with exact, inexact, or unbounded RLT relaxations.

\section{Implications on Algorithmic Constructions of Instances} \label{Sec6}

In this section, we discuss how our results can be utilized to design algorithms for constructing an instance of (QP) such that the lower bound from the RLT relaxation and the optimal value of (QP) will have a predetermined relation. In particular, our discussions on instances with exact and inexact RLT relaxations in this section can be viewed as generalizations of the algorithmic constructions discussed in~\cite{QY23} for quadratic programs with box constraints.

To that end, we will assume that the nonempty feasible region $F$ is fixed and given by \eqref{def_F}. We will discuss how to construct an objective function in such a way that the resulting instance of (QP) will have an exact, inexact, or unbounded RLT relaxation.

\subsection{Instances with an Unbounded RLT Relaxation}

By Lemma~\ref{bounded_imp}, if $F$ is nonempty and bounded, then the RLT relaxation cannot be unbounded. Therefore, a necessary condition to have an unbounded RLT relaxation is that $F$ is unbounded. In this case, the recession cone $\cF_\infty$ given by \eqref{def_cDinf} contains a nonzero $(\hat d, \hat D) \in \R^n \times \cS^n$ by Lemma~\ref{rel-rec-cones}. By linear programming duality, the RLT relaxation (RLT) is unbounded if and only if 
\begin{equation} \label{unb_RLT}
\textstyle\frac{1}{2}\langle Q, \hat D \rangle + c^T \hat d < 0, \quad \textrm{for some}~(\hat d, \hat D) \in \R^n \times \cS^n.    
\end{equation}

Let $\hat x \in F$ and $\hat d \in F_\infty$ be arbitrary. By Lemma~\ref{rel-rec-cones}, $(\hat d, \hat D) \in \cF_\infty$, where $\hat D = \hat x \hat d^T + \hat d \hat x^T$. By \eqref{unb_RLT}, it suffices to choose $(Q,c) \in \cS^n \times \R^n$ such that 
\[
\textstyle\frac{1}{2}\langle Q, \hat D \rangle + c^T \hat d = \hat d^T (Q \hat x + c) < 0,
\]
which would ensure that the RLT relaxation is unbounded. 

While this simple procedure can be used to construct an instance of (QP) with an unbounded RLT relaxation, we remark that the resulting instance of (QP) itself may also be unbounded. In particular, if $\hat d^T Q \hat d \leq 0$ in the aforementioned procedure, then (QP) will be unbounded along the direction $\hat x + \lambda \hat d$, where $\lambda \geq 0$. One possible approach to construct an instance of (QP) with a finite optimal value but an unbounded RLT relaxation is to generate $(Q,c) \in \cS^n \times \R^n$ in such a way that \eqref{unb_RLT} holds while a tighter relaxation of (QP) such as the RLT relaxation strengthened by semidefinite constraints has a finite lower bound. This property can be satisfied by ensuring the feasibility of $(Q,c)$ with respect to the dual problem of the tighter relaxation. Such an approach would require the solution of a semidefinite feasibility problem. 

\subsection{Instances with an Exact RLT Relaxation}

By Proposition~\ref{exactness_cond}, the RLT relaxation is exact if and only if there exists $v \in F$ that lies on a minimal face of $F$ such that $(v,vv^T)$ is an optimal solution of (RLT). This result can be used to easily construct an instance of (QP) with an exact RLT relaxation. 

The first step requires the computation of a point $v \in F$ that lies on a minimal face of $F$. Then, defining $(\hat x, \hat X) = (v,vv^T) \in \cF$, it is easy to construct $\hat u \in \R^m_+$ and $\hat S \in \cN^n$ such that \eqref{opt_cs1} and \eqref{opt_cs2} are satisfied. Finally, choosing an arbitrary $(\hat w, \hat R) \in \R^p \times \R^{p \times n}$ and defining $Q$ and $c$ using \eqref{opt_cs1} and \eqref{opt_cs2}, respectively, it follows from Lemma~\ref{opt_cond} that $(v,vv^T) \in \cF$ and $(\hat u, \hat w, \hat R, \hat S) \in \R^m \times \R^p \times \R^{p \times n} \times \cS^m$ are optimal solutions of (RLT) and (RLT-D), respectively. By Proposition~\ref{exactness_cond}, we conclude that the RLT relaxation is exact and that $v \in F$ is an optimal solution of (QP). We remark that this procedure not only ensures an exact RLT relaxation but also yields an instance of (QP) with a predetermined optimal solution $v \in F$.

\subsection{Instances with an Inexact and Finite RLT Relaxation}

First, we assume that $F$ has at least two distinct vertices $v^1 \in F$ and $v^2 \in F$. In this case, one can choose $\hat u \in \R^m_+$ and $\hat S \in \cN^m$ such that the assumptions of the second part of Proposition~\ref{type2vertices} are satisfied. Then, by choosing an arbitrary $(\hat w, \hat R) \in \R^p \times \R^{p \times n}$ and defining $Q$ and $c$ using \eqref{opt_cs1} and \eqref{opt_cs2}, respectively, we obtain that $(\frac{1}{2}(v^1 + v^2), \frac{1}{2} (v^1 (v^2)^T + v^2 (v^1)^T)) \in \cF$ is the unique optimal solution of (RLT). Therefore, the RLT relaxation has a finite lower bound $\ell_R$. By Proposition~\ref{exactness_cond}, we conclude that the RLT relaxation is inexact, i.e., $-\infty < \ell^*_R < \ell^*$.

We next consider the case in which $F$ has no vertices. In this case, $\cF_\infty$ also has no vertices by Lemma~\ref{novertex_imp}. Therefore, the RLT relaxation can never have a unique optimal solution. However, our next result shows that we can extend the procedure above to construct an instance of (QP) with an inexact but finite RLT lower bound under a certain assumption on $F$.

\begin{lemma} \label{inexact_RLT_minimal_faces}
 Suppose that $F$ given by \eqref{def_F} has no vertices but has two distinct minimal faces $F_1 \subseteq F$ and $F_2 \subseteq F$. Let $v^1 \in F_1$ and $v^2 \in F_2$. Suppose that $G = [G^0 \quad G^1 \quad G^2 \quad G^3]$ is defined as in Proposition~\ref{type2vertices}. Let $\hat u \in \R^m_+$ and $\hat S \in \cN^m$ be such that the assumptions of the second part of Proposition~\ref{type2vertices} are satisfied. Let $(\hat w, \hat R) \in \R^p \times \R^{p \times n}$ be arbitrary. If $Q$ and $c$ are defined by \eqref{opt_cs1} and \eqref{opt_cs2}, respectively, then the RLT relaxation of the resulting instance of (QP) is inexact and satisfies $-\infty < \ell^*_R < \ell^*$.
\end{lemma}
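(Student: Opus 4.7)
The plan is to show that $(\bar x, \bar X) := (\tfrac12(v^1 + v^2),\, \tfrac12(v^1(v^2)^T + v^2(v^1)^T))$ is a primal optimal solution of (RLT) paired with the prescribed dual variables $(\hat u, \hat w, \hat R, \hat S)$, yielding a finite lower bound $\ell^*_R$; and then to invoke Proposition~\ref{exactness_cond} together with complementary slackness to rule out any rank-one optimal solution of the form $(v, v v^T)$ with $v$ on a minimal face, forcing the relaxation to be strictly inexact.

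For the optimality step, I would first check $(\bar x, \bar X) \in \cF$ by the same calculation as in the proof of Proposition~\ref{midpoint-vertex}. The KKT conditions of Lemma~\ref{opt_cond} then reduce to verifying~\eqref{opt_cs1} and~\eqref{opt_cs2}, since~\eqref{opt_c} and~\eqref{opt_Q} hold by construction. The former is immediate: $\hat u$ is supported on the block indexed by $G^0$, and $(G^0)^T v^i = g^0$ for $i = 1, 2$ forces $(G^0)^T \bar x = g^0$. For the latter, the residual $G^T \bar X G - G^T \bar x g^T - g \bar x^T G + g g^T$ has the block pattern already computed in the proof of Proposition~\ref{type2vertices}---zero on the $(0,*)$, $(*,0)$, $(1,1)$ and $(2,2)$ blocks and componentwise nonnegative elsewhere---which is exactly complementary to the support of $\hat S$ given in~\eqref{defuStype2}. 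Hence the inner product vanishes, $(\bar x, \bar X)$ is optimal, and $\ell^*_R$ is finite; combined with $\ell^*_R \leq \ell^*$ and $F \neq \emptyset$, this also gives $-\infty < \ell^* < +\infty$.

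For the inexactness step, I would argue by contradiction: if $\ell^*_R = \ell^*$, then Proposition~\ref{exactness_cond} furnishes a point $v \in F$ on some minimal face such that $(v, v v^T)$ is optimal for (RLT). Since $(\hat u, \hat w, \hat R, \hat S)$ is dual-optimal from the first step, it must also satisfy complementary slackness with $(v, v v^T)$. The residual at this new solution is the componentwise nonnegative rank-one matrix $(g - G^T v)(g - G^T v)^T$, and since $\hat S \geq 0$ componentwise, the scalar equation $\langle \hat S, (g - G^T v)(g - G^T v)^T\rangle = 0$ forces each entry of the product to vanish. In particular, the strict positivity of the diagonal blocks $\hat S^{00}$, $\hat S^{11}$, $\hat S^{22}$ yields $(G^k)^T v = g^k$ for $k = 0, 1, 2$. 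Thus $v$ simultaneously satisfies the defining equations of the minimal face $F_1$ (active set $0 \cup 1$) and those of $F_2$ (active set $0 \cup 2$), i.e., $v \in F_1 \cap F_2$.

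The argument then closes with the polyhedral fact that two distinct minimal faces must be disjoint: the intersection of two faces is itself a face, and if $F_1 \cap F_2$ were nonempty it would be a face contained in both minimal faces $F_1$, $F_2$, forcing $F_1 = F_2$ by minimality (equivalently, minimal faces are parallel translates of the lineality space $F_\infty \cap (-F_\infty)$). This contradicts $v \in F_1 \cap F_2$, so $\ell^*_R < \ell^*$ and we obtain $-\infty < \ell^*_R < \ell^*$. The main obstacle I anticipate is ensuring that the strict positivity assumptions on $\hat S^{00}$, $\hat S^{11}$, $\hat S^{22}$ translate cleanly into the activation of all constraints in blocks $0$, $1$, $2$ at $v$; the rank-one structure of the residual at $(v, v v^T)$ makes this translation transparent, so the entire argument ultimately hinges on the disjointness of distinct minimal faces of a polyhedron.
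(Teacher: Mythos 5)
Your proposal is correct and follows essentially the same route as the paper: establish optimality of $(\tfrac12(v^1+v^2),\tfrac12(v^1(v^2)^T+v^2(v^1)^T))$ via the complementary-slackness conditions of Lemma~\ref{opt_cond}, then suppose exactness, invoke Proposition~\ref{exactness_cond} to get a rank-one optimal $(v,vv^T)$ with $v$ on a minimal face, and use the strict positivity of the designated blocks of $\hat u$ and $\hat S$ to force $(G^k)^Tv=g^k$ for $k=0,1,2$, contradicting minimality. The only cosmetic difference is the closing step: you phrase the contradiction as disjointness of distinct minimal faces, while the paper phrases it as $v$ lying on a face of strictly smaller dimension than a minimal face; these are the same fact.
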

\begin{proof}
 Arguing similarly to the first part of the proof of Proposition~\ref{type2vertices}, we conclude that $(\frac{1}{2}(v^1 + v^2), \frac{1}{2} (v^1 (v^2)^T + v^2 (v^1)^T)) \in \cF$ is an optimal solution of the RLT relaxation of the resulting instance of (QP). Therefore, $-\infty < \ell^*_R$.

 Next, we argue that the RLT relaxation is inexact. First, since each of $F_1$ and $F_2$ are minimal faces, they are affine subspaces given by
 \begin{eqnarray*}
 F_1 & = & \{x \in \R^n: (G^0)^T x = g^0, \quad (G^1)^T x = g^1, \quad H^T x = h\},\\
 F_2 & = & \{x \in \R^n: (G^0)^T x = g^0, \quad (G^2)^T x = g^2, \quad H^T x = h\}.
 \end{eqnarray*}
Suppose, for a contradiction, that the RLT relaxation is exact. Then, by Proposition~\ref{exactness_cond}, there exists $v \in F_0$, where $F_0 \subseteq F$ is a minimal face of $F$, such that $(v,vv^T)$ is an optimal solution of (RLT). Let us define $r = g - G^T v \geq 0$ and partition $r$ accordingly as
 \[
 (G^i)^T v = g^i - r^i, \quad i = 0,1,2,3,
 \]
where $r^i \in \R^{m_i}_+,~i = 0,1,2,3$. By Lemma~\ref{opt_cond}, $(v,vv^T)$ and $(\hat u, \hat w, \hat R, \hat S) \in \R^m \times \R^p \times \R^{p \times n} \times \cS^m$ satisfy the optimality conditions \eqref{opt_c}--\eqref{opt_cs2}. By \eqref{opt_cs1} and $\hat u^0 > 0$, we conclude that $r^0 = 0$. On the other hand, 
\[
G^T v v^T G - G^T v g^T - g v^T G + g g^T = G^T v v^T G + r g^T + g \hat r^T - gg^T = r r^T.
\]
By \eqref{opt_cs2}, we obtain $r^1 = 0$ and $r^2 = 0$ since $\hat S^{11} \in \cS^{m_1}$ and $\hat S^{22} \in \cS^{m_2}$ are strictly positive. It follows that $v \in F_0$ satisfies
\[
(G^0)^T v = g^0, \quad (G^1)^T v = g^1, \quad (G^2)^T v = g^2, \quad H^T v = h,
\]
i.e., $v$ lies on a face whose dimension is strictly smaller than each of $F_1$ or $F_2$. This contradicts our assumption that each of $F_1$ and $F_2$ is a minimal face of $F$. We therefore conclude that (RLT) cannot have an optimal solution of the form $(v,vv^T)$, where $v$ lies on a minimal face of $F$. By Proposition~\ref{exactness_cond}, we conclude that the RLT relaxation is inexact, i.e., $-\infty < \ell^*_R < \ell^*$.
\end{proof}

The next example illustrates the algorithmic construction of Lemma~\ref{inexact_RLT_minimal_faces}.

\begin{example} \label{Ex_min_face_RLT}
Suppose that 
\[
F = \{x \in \R^2: x_1 + x_2 \leq 1, \quad x_1 + x_2 \geq -1\},
\]
i.e., $n = 2$, $m = 2$, $p = 0$, and 
\[
G = \begin{bmatrix} 1 & -1 \\ 1 & -1 \end{bmatrix}, \quad g = \begin{bmatrix} 1 \\ 1 \end{bmatrix}.
\]
Note that $F$ has no vertices since it contains the line $\{x \in \R^2: x_1 + x_2 = 0\}$. $F$ has two minimal faces given by
\begin{eqnarray*}
    F_1 & = & \{x \in \R^2: x_1 + x_2 = 1\},\\
    F_2 & = & \{x \in \R^2: x_1 + x_2 = -1\}.
\end{eqnarray*}
Therefore, $F$ satisfies the assumptions of Lemma~\ref{inexact_RLT_minimal_faces}. Let us choose $v^1 = [1 \quad 0]^T \in F_1$ and $v_2 = [0 \quad -1]^T \in F_2$. We obtain
\[
\hat x = \textstyle \frac{1}{2}(v^1 + v^2) = \begin{bmatrix}\textstyle \frac{1}{2} \\ -\textstyle \frac{1}{2} \end{bmatrix}, \quad \hat X = \textstyle \frac{1}{2} (v^1 (v^2)^T + v^2 (v^1)^T) = \begin{bmatrix} 0 & -\textstyle \frac{1}{2}\\ -\textstyle \frac{1}{2} & 0\end{bmatrix}.
\]
Since $g - G^T \hat x > 0$, we choose $\hat u = 0 \in \R^2$. Using the partition given in Proposition~\ref{type2vertices}, we obtain that $G^0$ is an empty matrix, $G^1 = [1 \quad 1]^T$, $G^2 = [-1 \quad -1]^T$, and $G^3$ is an empty matrix. By the second part of Proposition~\ref{type2vertices}, we choose 
\[
\hat S = \begin{bmatrix}
1 & 0 \\ 0 & 2    
\end{bmatrix}.
\]
Therefore, by \eqref{opt_cs1} and \eqref{opt_cs2}, we obtain
\[
Q = \begin{bmatrix}
   3 & 3 \\ 3 & 3 
\end{bmatrix}, \quad c = \begin{bmatrix} 1 \\ 1 \end{bmatrix}.
\]
By Proposition~\ref{type2vertices}, $\ell^*_R = \textstyle\frac{1}{2}\langle Q, \hat X \rangle + c^T \hat x = -\textstyle\frac{3}{2}$. On the other hand, it is easy to verify that $\ell^* = -\textstyle\frac{1}{6}$ and an optimal solution of (QP) is given by $x^* = -\left[\textstyle\frac{1}{6} \quad \textstyle\frac{1}{6}\right]^T$. Therefore, $-\infty < \ell^*_R < \ell^*$. 
\end{example}

\subsection{Implications of One Minimal Face}

We finally consider the case in which $F$ has exactly one minimal face. For any instance of (QP) with this property, we show that the RLT relaxation is either exact or unbounded below.

\begin{lemma} \label{one_min_face}
Consider an instance of (QP), where $F$ given by \eqref{def_F} has exactly one minimal face. Then, the RLT relaxation is either exact or unbounded below.  
\end{lemma}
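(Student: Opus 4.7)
The plan is: dispose of $\ell^*_R = -\infty$ trivially and, when $\ell^*_R$ is finite, use Proposition~\ref{exactness_cond} by showing that $(v^1, v^1(v^1)^T)$ is optimal for (RLT), where $v^1$ lies on the unique minimal face. Lemma~\ref{decomp_result} with $s = 1$ gives $F = \{v^1\} + F_\infty$.

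The first step is to prove $\ell^* = q(v^1)$ via the alternative formulation (QPA) of Section~\ref{specific_QP_class}. With $s = 1$, one has $a_A = e^1$ and $F_A = \{(1, z^T)^T : z \ge 0\}$, whose single vertex is $e^1$ and which admits no pair of distinct vertices. Corollary~\ref{RLT_specific_closed_form} then gives $\ell^*_{RA} = q_A(e^1) = q(v^1)$ whenever $\ell^*_{RA}$ is finite; combined with the sandwich $\ell^*_R \le \ell^*_{RA} \le \ell^*$ from Proposition~\ref{RLT_comp1} and $v^1 \in F$ (so $\ell^* \le q(v^1) < \infty$), this forces $\ell^* = q(v^1)$ whenever $\ell^*_R$ is finite.

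The remaining task is $\ell^*_R \ge q(v^1)$. Substituting \eqref{opt_c}--\eqref{opt_Q} into the primal objective at $(v^1, v^1(v^1)^T)$ yields the identity
\[
q(v^1) - \bigl(-u^T g + w^T h - \tfrac{1}{2} g^T S g\bigr) = u^T \tilde g + \tfrac{1}{2}\tilde g^T S \tilde g, \qquad \tilde g := g - G^T v^1 \ge 0,
\]
valid for every dual-feasible $(u, w, R, S)$. Partitioning the indices into $A = \{j : \tilde g_j = 0\}$ and $B = \{j : \tilde g_j > 0\}$, it suffices to exhibit such a quadruple with $u_B = 0$ and $S_{BB} = 0$, for then weak duality closes the gap. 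The vanishing of $u_B$ is automatic: because $F = v^1 + F_\infty$ forces $G^T x \le G^T v^1$ on $F$, every primal-feasible $\hat x$ has $\hat r_j \ge \tilde g_j > 0$ on $B$, and complementary slackness $\hat u^T \hat r = 0$ kills $\hat u_B$.

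The main obstacle is the requirement $S_{BB} = 0$, and my plan is to lift the optimal dual of the (RLTA-D) problem back to (RLT-D). Because $F_A$ has only the vertex $e^1$, complementary slackness at the (RLTA) optimum forces $\gamma_1 = \Gamma_{11} = 0$; the finiteness conditions $(Qv^1 + c)^T d^j \ge 0$ and $P^T Q P \ge 0$ componentwise (both extracted from $\ell^*_R > -\infty$ using the explicit recession directions of $\cF$ furnished by Lemma~\ref{rel-rec-cones}) then allow the choice $\gamma_j = 0$ for $j \ge 2$ and give $\Gamma \ge 0$ explicitly. Translating the matrix identity $Q_A = \alpha a_A^T + a_A \alpha^T + 2\Gamma$ back through $Q_A = [v^1, P]^T Q [v^1, P]$ via $H^T v^1 = h$, $H^T P = 0$, $(G^0)^T v^1 = g^0$, and $(G^0)^T P \le 0$ (with $G^0, g^0$ denoting the rows and entries of $(G, g)$ indexed by $A$) then yields $\hat w, \hat R$, and $\hat S^{00} \ge 0$ satisfying $Q = \hat R^T H^T + H \hat R + G^0 \hat S^{00} (G^0)^T$ and $c = H \hat w - \hat R^T h - G^0 \hat S^{00} g^0$. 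Setting $u = 0$ and $S$ equal to the zero extension of $\hat S^{00}$ from $A \times A$ to $\{1, \ldots, m\}^2$ produces the desired dual-feasible quadruple with both gap terms zero, hence $\ell^*_R \ge q(v^1)$, and Proposition~\ref{exactness_cond} closes the argument.
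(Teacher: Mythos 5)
Your overall architecture is reasonable up to a point: reducing to Proposition~\ref{exactness_cond}, the duality-gap identity $q(v^1)-\bigl(-u^Tg+w^Th-\tfrac12 g^TSg\bigr)=u^T\tilde g+\tfrac12\tilde g^TS\tilde g$ is correct, and the observation that $u_B=0$ is forced (since $F=\{v^1\}+F_\infty$ gives $\hat r\ge\tilde g$ on all of $F$) is sound. But the step you yourself flag as the main obstacle --- producing a dual-feasible $(0,\hat w,\hat R,\hat S)$ with $\hat S$ supported on $A\times A$ --- is not actually carried out, and the route you propose cannot carry it out. The identity $[v^1\ P]^TQ[v^1\ P]=\alpha a_A^T+a_A\alpha^T+2\Gamma$ constrains $Q$ only through its action on $\textrm{span}\{v^1,d^1,\ldots,d^t\}$, and even there, converting a componentwise-nonnegative certificate $\Gamma$ \emph{indexed by generators} into a componentwise-nonnegative certificate $\hat S^{00}$ \emph{indexed by active constraints} (through the matrix $(G^0)^TP\le 0$, which may be rank-deficient, non-square, or have zero columns) is a nontrivial cone-membership statement --- it is essentially equivalent to feasibility of the reduced dual, i.e., to the lemma itself. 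The relations $H^Tv^1=h$, $H^TP=0$, $(G^0)^Tv^1=g^0$, $(G^0)^TP\le0$ only let you go in the \emph{forward} direction (a certificate for (RLT-D) induces one for (RLTA-D), which is exactly how Proposition~\ref{RLT_comp1} proves $\ell^*_R\le\ell^*_{RA}$); Example~\ref{Example2} shows the reverse implication fails in general, so some special feature of the one-minimal-face case must be invoked, and your argument never identifies it. A second, related gap: the only finiteness information you feed in, namely $(Qv^1+c)^Td^j\ge0$ and $P^TQP\ge0$, is extracted from the recession directions supplied by Lemma~\ref{rel-rec-cones}, which are in general a \emph{proper subset} of $\cF_\infty$; so even granting the translation, you would be deriving dual feasibility from necessary-but-not-sufficient boundedness conditions, which cannot work unless those conditions happen to characterize boundedness here --- again something that would need proof.

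For comparison, the paper avoids constructing the dual solution altogether. It shows that the inactive constraints $(G^1)^Tx\le g^1$ are strictly slack on all of $F$ (hence redundant), invokes the fact that the RLT constraints generated by redundant linear constraints are themselves implied, so $\cF$ is cut out by the $G^0$ and $H$ constraints alone; at $(v,vv^T)$ \emph{every} remaining inequality of this reduced description is active, so complementary slackness holds automatically with \emph{any} feasible solution of the reduced dual. LP duality then gives the dichotomy: reduced dual feasible implies $(v,vv^T)$ optimal (hence exactness via Proposition~\ref{exactness_cond}), reduced dual infeasible implies the primal is unbounded below. If you want to salvage your approach, you would need to replace the (RLTA-D) lifting by this redundancy-plus-duality argument, or else prove directly that $\cF_\infty$ equals the cone of directions from Lemma~\ref{rel-rec-cones} in the one-minimal-face case --- a complete characterization the paper does not claim and explicitly leaves open in its concluding remarks.
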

\begin{proof}
Suppose that $F$ has one minimal face $F_0 \subseteq F$ and let $v \in F_0$. Suppose that $G = [G^0 \quad G^1]$ so that $(G^0)^T v = g^0$ and $(G^1)^T v < g^1$, where $G^0 \in \R^{n \times m_0}$, $G^1 \in \R^{n \times m_1}$, $g^0 \in \R^{m_0}$, and $g^1 \in \R^{m_1}$. First, we claim that the set of inequalities $(G^1)^T x \leq g^1$ is redundant for $F$. Let $\hat x \in F$ be arbitrary. 
By Lemma~\ref{decomp_result}, 
\[
F = \{v\} + F_\infty,
\]
where $F_\infty$ is given by \eqref{def_Dinf}. Therefore, there exists $\hat d \in F_\infty$ such that $\hat x = v + \hat d$. Therefore, $(G^1)^T \hat x = (G^1)^T v + (G^1)^T \hat d < g^1$ since $(G^1)^T v < g^1$ and $(G^1)^T \hat d \leq 0$ by \eqref{def_Dinf}. Therefore, $(G^1)^T x \leq g^1$ is implied by $(G^0)^T x \leq g^0$ and $H^T x = h$. By \cite[Proposition 2]{sherali1995reformulation}, all of the RLT constraints obtained from $(G^1)^T x \leq g^1$ are implied by the RLT constraints obtained from $(G^0)^T x \leq g^0$ and $H^T x = h$. Therefore, we have $(\hat x,\hat X) \in \cF$ if and only if $(G^0)^T \hat x \leq g^0$, $H^T \hat x = h$, $H^T \hat X = h \hat x^T$, and
\[
(G^0)^T \hat X G^0 - (G^0)^T \hat x (g^0)^T - g^0 \hat x^T G^0 + g^0 (g^0)^T \geq 0. 
\]
Note that all of the inequality constraints of $\cF$ are active at $(v,vv^T)$ (or at any $(v^\prime,v^\prime (v^\prime)^T)$, where $v^\prime \in F_0$). If the feasible region of the dual problem given by (RLT-D) is nonempty, then any $(v,vv^T) \in \cF$, where $v \in F_0$, satisfies the optimality conditions of Lemma~\ref{opt_cond} together with any feasible solution of (RLT-D). It follows that any such $(v,vv^T) \in \cF$ is an optimal solution of (RLT). By Proposition~\ref{exactness_cond}, we conclude that the RLT relaxation is exact. On the other hand, if (RLT-D) is infeasible, then (RLT) is unbounded below by linear programming duality. The assertion follows.
\end{proof}

We conclude this section with the following corollary.

\begin{corollary} \label{one_vertex_cor}
Suppose that $F$ given by \eqref{def_F} has exactly one vertex $v \in F$. Then, $\cF$ given by \eqref{def_cF} has exactly one vertex $(v,vv^T)$. Furthermore, if $F = \{v\}$, then $\cF = \{(v,vv^T)\}$.    
\end{corollary}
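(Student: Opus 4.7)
My plan is to prove the two assertions in turn, with the bulk of the work lying in the first one. That $(v,vv^T)$ is a vertex of $\cF$ is immediate from Proposition~\ref{vertex1} applied to $v$; the content is the uniqueness part. To this end I will use a line-segment extension argument: given an arbitrary vertex $(\hat x, \hat X)$ of $\cF$, define
\[
\tilde \gamma(\epsilon) := \left((1+\epsilon)\hat x - \epsilon v,\; (1+\epsilon)\hat X - \epsilon vv^T\right), \quad \epsilon \geq 0,
\]
so that $(\hat x, \hat X) = \tfrac{1}{1+\epsilon}\,\tilde \gamma(\epsilon) + \tfrac{\epsilon}{1+\epsilon}(v,vv^T)$, a strict convex combination for every $\epsilon > 0$. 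Note that $\tilde \gamma(\epsilon) \neq (v,vv^T)$ precisely when $(\hat x, \hat X) \neq (v,vv^T)$. Thus, if I can exhibit even one $\epsilon > 0$ with $\tilde \gamma(\epsilon) \in \cF$, then under the assumption $(\hat x, \hat X) \neq (v,vv^T)$ the point $(\hat x, \hat X)$ is interior to a nondegenerate segment in $\cF$, contradicting Lemma~\ref{vertex_chars}(ii); hence $(\hat x, \hat X) = (v,vv^T)$.

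The membership $\tilde \gamma(\epsilon) \in \cF$ splits into three checks. Because $F$ has the unique minimal face $\{v\}$, Lemma~\ref{decomp_result} gives $\hat x = v + d$ for some $d \in F_\infty$, so the $x$-coordinate of $\tilde \gamma(\epsilon)$ equals $v + (1+\epsilon) d \in F$ for every $\epsilon \geq 0$. The equality $H^T X = h x^T$ is affine in $(x,X)$ and is therefore preserved under affine combinations. The slack map $S(x,X) := G^T X G - G^T x g^T - g x^T G + gg^T$ is also affine in $(x,X)$, so $S(\tilde \gamma(\epsilon)) = (1+\epsilon)\, S(\hat x, \hat X) - \epsilon\, S(v,vv^T)$. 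Since $S(v,vv^T) = r^v (r^v)^T$ with $r^v := g - G^T v$, and the entries of $r^v$ indexed by the columns of $G^0$ vanish, $S(v,vv^T)$ has zero entries on every block touching $G^0$ and strictly positive entries on the $G^1 \times G^1$ block. The componentwise nonnegativity of $S(\tilde \gamma(\epsilon))$ is therefore automatic on the $G^0$-blocks, and reduces to the inequality $(1+\epsilon)\, S(\hat x, \hat X)_{ij} \geq \epsilon\, r^v_i r^v_j$ for $i,j$ indexing $G^1$, which holds for all sufficiently small $\epsilon > 0$ provided $S(\hat x, \hat X)_{ij} > 0$ throughout the $G^1 \times G^1$ block.

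The main obstacle is establishing this strict positivity. My strategy, following the proof of Lemma~\ref{one_min_face}, is to use the representation underlying \cite[Proposition~2]{sherali1995reformulation}: there exist $\Lambda \in \R^{m_0 \times m_1}$ with $\Lambda \geq 0$ and $N \in \R^{p \times m_1}$ such that $G^1 = G^0 \Lambda + H N$ and $g^1 \geq \Lambda^T g^0 + N^T h$ componentwise, and the latter inequality is entrywise \emph{strict} because $(G^1)^T v < g^1$. Writing $Y := \hat X - \hat x \hat x^T$, the identity $H^T \hat X = h \hat x^T$ forces $H^T Y = 0$, and hence $Y H = 0$ by symmetry of $Y$, so the cross terms vanish and
\[
(G^1)^T Y G^1 = \Lambda^T (G^0)^T Y G^0\, \Lambda.
\]
The componentwise bound $(G^0)^T Y G^0 \geq -(g^0 - (G^0)^T \hat x)(g^0 - (G^0)^T \hat x)^T$ (equivalent to $S_{G^0 G^0}(\hat x, \hat X) \geq 0$), together with the identity $g^1 - (G^1)^T \hat x = (g^1 - (G^1)^T v) + \Lambda^T (g^0 - (G^0)^T \hat x)$ obtained by substituting $G^1 = G^0 \Lambda + H N$, then yield after a short expansion and cancellation that $S(\hat x, \hat X)_{ij} \geq r^v_i r^v_j > 0$ for all $i,j$ indexing $G^1$. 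This supplies the missing estimate and completes the first assertion via the line-segment argument above.

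For the second assertion, if $F = \{v\}$ then $F$ is bounded, so Lemma~\ref{bounded_imp} implies that $\cF$ is nonempty and bounded. A bounded polyhedron equals the convex hull of its vertices, which by the first assertion is $\{(v,vv^T)\}$; therefore $\cF = \{(v,vv^T)\}$.
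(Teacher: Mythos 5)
Your proof is correct, but it takes a genuinely different route from the paper's. The paper disposes of uniqueness indirectly: by Lemma~\ref{vertex_chars}(iv) any vertex of $\cF$ is the unique optimizer of some linear functional, i.e.\ of some RLT objective $\frac{1}{2}\langle Q,X\rangle + c^Tx$; since $F$ has exactly one minimal face, Lemma~\ref{one_min_face} says that relaxation is exact or unbounded, and having a (unique) optimum it must be exact, whence Proposition~\ref{exactness_cond} forces the optimizer to be $(v',v'(v')^T)$ with $v'$ on the unique minimal face $\{v\}$. You instead argue directly at the polyhedral level: any $(\hat x,\hat X)\in\cF$ other than $(v,vv^T)$ lies in the relative interior of a segment of $\cF$, obtained by extending past $(\hat x,\hat X)$ away from $(v,vv^T)$. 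The only nontrivial feasibility check is nonnegativity of the affine slack $S(\cdot)$ on the $G^1\times G^1$ block, and your derivation of the bound $S(\hat x,\hat X)_{G^1G^1}\geq r^v(r^v)^T>0$ is sound: the affine Farkas representation $G^1=G^0\Lambda+HN$, $\Lambda\geq 0$, of the (nowhere-active, hence redundant) constraints $(G^1)^Tx\leq g^1$ is available exactly as in Lemma~\ref{one_min_face}; the identities $H^TY=YH=0$ for $Y=\hat X-\hat x\hat x^T$ and $r^1=r^{1,v}+\Lambda^Tr^0$ check out; and monotonicity of $A\mapsto\Lambda^TA\Lambda$ under $\Lambda\geq 0$ gives the claimed cancellation. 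Each approach buys something: the paper's is shorter given the machinery of Sections~\ref{Sec5} and~\ref{Sec6} and shows the exactness characterization doing the work, but it leans on Proposition~\ref{exactness_cond} (and, through it, the Frank--Wolfe theorem); yours is self-contained, and your uniform bound actually shows the whole ray $\tilde\gamma(\epsilon)$, $\epsilon\geq 0$, is feasible, i.e.\ $(\hat x - v,\hat X - vv^T)\in\cF_\infty$ for every $(\hat x,\hat X)\in\cF$, which is slightly more information (namely $\cF=\{(v,vv^T)\}+\cF_\infty$). The second assertion is handled the same way in both proofs.
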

\begin{proof}
Suppose that $F$ has one vertex $v \in F$. By Proposition~\ref{vertex1}, $(v,vv^T)$ is a vertex of $\cF$. By Lemma~\ref{one_min_face}, Proposition~\ref{exactness_cond}, and Lemma~\ref{vertex_chars}(iv), any vertex of $\cF$ is given by $(v^\prime, v^\prime (v^\prime)^T)$, where $v^\prime$ is a vertex of $F$, which establishes the first assertion. The second assertion follows from Lemma~\ref{bounded_imp} and the first assertion.
\end{proof}

\section{Concluding Remarks} \label{Sec7}

In this paper, we studied various relations between the polyhedral properties of the feasible region of a quadratic program and its RLT relaxation. We presented necessary  and sufficient conditions for the set of instances of quadratic programs that admit exact RLT relaxations. We then discussed how our results can be used to construct quadratic programs with an exact, inexact, and unbounded RLT relaxation.

For RLT relaxations of general quadratic programs, we are able to establish a partial characterization of the set of vertices of the feasible region of the RLT relaxation. We intend to work on a complete characterization of this set in the near future. Such a characterization may have further algorithmic implications for constructing a larger set of instances with inexact but finite RLT relaxations.

Our results in this paper establish several properties of RLT relaxations of quadratic programs. Another interesting question is how the structural properties change for higher-level RLT relaxations.

\bibliographystyle{abbrv}
\bibliography{references}

\end{document}